\newcommand{\p}{\partial}
\newcommand{\Z}{\mathbb Z}
\newcommand{\E}{\mathcal E}
\newcommand{\Q}{\mathbb Q}
\renewcommand{\H}{\mathcal H}
\newcommand{\G}{\mathcal G}
\newcommand{\D}{\mathcal D}
\newcommand{\Sm}{\mathcal S}
\newcommand{\B}{\,\mathcal B}
\newcommand{\C}{\mathbb C}
\newcommand{\M}{\mathcal M}
\newcommand{\R}{\mathbb R}
\newcommand{\im}{\operatorname{im}}
\newcommand{\ep}{\varepsilon}
\renewcommand{\phi}{\varphi}
\newcommand{\SW}{\operatorname{SW}\,}
\newcommand{\FO}{\operatorname{FO}\,}
\newcommand{\ind}{\operatorname{ind}}
\newcommand{\coker}{\operatorname{coker}}
\renewcommand{\Re}{\operatorname{Re}}
\renewcommand{\Im}{\operatorname{Im}}
\newcommand{\rk}{\operatorname{rank}}
\newcommand{\Res}{\operatorname{Res}}
\newcommand{\Map}{\operatorname{Map}\,}
\newcommand{\sign}{\operatorname{sign}}
\renewcommand{\P}{\mathcal{P}}
\newcommand{\btilde}{\beta}
\newcommand{\mZ}{\mathcal Z}
\newcommand{\tmZ}{\widetilde{\mathcal Z}}
\newcommand{\SF}{\operatorname{SF}}
\newcommand{\met}{\operatorname{{\mathcal R}}}
\newcommand{\swz}{\chi} 
\newcommand{\Dir}{\,\operatorname{Dir}}
\newcommand{\Sign}{\,\operatorname{Sign}}
\newcommand{\Spec}{\,\operatorname{Spec}}
\newcommand{\Ss}{\it\Sigma}
\newcommand{\spinc}{\ifmmode{\operatorname{Spin}^c}\else{$\operatorname{spin}^c$\ }\fi}
\newcommand{\spincs}{\mathfrak s}
\newtheorem{theorem}{Theorem}[section]
\newtheorem{thm}{Theorem}
\newtheorem{theorem2}[thm]{Conjecture}
\newtheorem{lemma}[theorem]{Lemma}
\newtheorem{proposition}[theorem]{Proposition}
\newtheorem{corollary}[theorem]{Corollary}
\theoremstyle{definition}
\newtheorem{remark}[theorem]{Remark}
\title{Seiberg--Witten equations, end-periodic Dirac 
operators, and a lift of Rohlin's invariant}
\thanks{The first author was partially supported by NSF Grant
0805841, the second author was partially supported by NSF
Grant 0804760, and the third author was partially supported by NSF
Grant 0305946 and the Max-Planck-Institut f\"ur Mathematik in Bonn,
Germany}
\author[Tomasz Mrowka]{Tomasz Mrowka}
\address{Department of Mathematics\newline\indent Massachusetts Institute of 
Technology, Cambridge MA 02139}
\email{\rm{mrowka@mit.edu}}
\author[Daniel Ruberman]{Daniel Ruberman}
\address{Department of Mathematics, MS 050\newline\indent Brandeis
University \newline\indent Waltham, MA 02454}
\email{\rm{ruberman@brandeis.edu}}
\author[Nikolai Saveliev]{Nikolai Saveliev}
\address{Department of Mathematics\newline\indent
University of Miami \newline\indent PO Box 249085
\newline\indent Coral Gables, FL 33124}
\email{\rm{saveliev@math.miami.edu}}
\begin{document}
\begin{abstract}
We introduce a gauge-theoretic integer valued lift of the Rohlin invariant of a smooth $4$-manifold $X$ with the homology of $S^1 \times S^3$.  The invariant has two terms; one is a count of solutions to the Seiberg-Witten equations on $X$, and the other is essentially the index of the Dirac operator on a non-compact manifold with end modeled on the infinite cyclic cover of $X$.  Each term is metric (and perturbation) dependent, and we show that these dependencies cancel as the metric and perturbation vary in a generic 1-parameter family.
\end{abstract}
\maketitle

\section{Introduction}\label{S:intro}
We use the Seiberg-Witten equations to define an integer valued invariant of 
an oriented smooth closed $4$-manifold $X$ with the integral homology of $S^1\times 
S^3$ that reduces mod $2$ to the Rohlin invariant~\cite{ruberman:ds,ruberman-saveliev:survey,scharlemann:phs} of $X$. Manifolds with this homology type arise, for instance, from furling up a homology cobordism from an integral homology sphere to itself, hence their study should shed light on classical problems concerning the homology cobordism group $\Theta_3^H$ of integral homology spheres and the classical Rohlin invariant $\rho: \Theta_3^H \to \mathbb Z/2$. In addition, the basic example of such a manifold, namely $S^1\times S^3$ itself, is perhaps the simplest smooth $4$-manifold that might admit an exotic smooth structure, detectable in principle by the Rohlin invariant (the Kirby--Siebenmann invariant~\cite{kirby-siebenmann} in this dimension). 

Because $b^2_+(X) = 0$, the usual count of solutions to the Seiberg-Witten equations on $X$ will depend on choices of metric and perturbation. The main work in the paper is to define and understand a correction term to this count, based on a study of the spin Dirac operator on a non-compact manifold with end modeled on the infinite cyclic cover of $X$.   The index of this operator is defined using Taubes' theory~\cite{T} of periodic-end operators; we extend this theory by proving a change of index formula for 1-parameter families of such operators in terms of version of spectral flow.

In summary, the invariant is defined as follows. The Seiberg--Witten equations 
on $X$ depend on two parameters: a Riemannian metric $g$ and a perturbing 
self-dual $2$-form, written as $d^+\beta$. For generic choices of these 
parameters, the Seiberg--Witten moduli space $\M(X,g,\beta)$ consists only of 
irreducible points, which can be counted with signs depending on a choice of 
orientation and homology orientation on $X$. This count, denoted $\# \M (X,g,
\beta)$, will generally depend on the parameters $(g,\beta)$ because reducible 
solutions can appear along a path connecting two sets of generic parameters. To 
remedy this, we introduce an index-theoretic correction term $w\,(X,g,\beta)$ 
whose definition depends on an understanding of the Dirac operator on a 
non-compact manifold with end modeled on the infinite cyclic cover of $X$, 
using Taubes' theory~\cite{T} of end-periodic operators and some additional 
analysis. By proving that the change in $w\,(X,g,\beta)$ (the aforementioned 
spectral flow) is the same as the change in $\# \M (X,g,\beta)$, we arrive at 
the main result of this paper:

\begin{thm}\label{T:main}
Let $X$ be a smooth oriented homology oriented $4$-mani\-fold with the integral 
homology of $S^1 \times S^3$, and define 
\[
\lambda_{\,\SW} (X) = \#\,\M(X,g,\beta) - w\,(X,g,\beta).
\]
Then $\lambda_{\,\SW} (X)$ is independent of the choice of metric $g$ and 
generic perturbation $\beta$; moreover, the reduction of $\lambda_{\,\SW} 
(X)$ modulo $2$ is the Rohlin invariant of $X$.
\end{thm}

When $X$ has the form $S^1 \times Y$ for an integral homology sphere $Y$, our 
invariant becomes the invariant of $Y$ studied by Weimin 
Chen~\cite{chen:casson} and Yuhan Lim~\cite{lim:sw3}. Indeed, our approach was 
inspired by their 
treatment of a similar issue with the count of irreducibles in the 
Seiberg--Witten moduli space on $Y$, namely, that this count can change in 
a one-dimensional family of parameters. To counter this, Chen and Lim 
(independently, following a suggestion of Kronheimer~\cite{donaldson:swsurvey})
introduced an index-theoretic, metric-dependent invariant of $Y$ that jumps 
in the same fashion as the above count. In fact, this `counter-term' is a 
combination of $\eta$--invariants associated to the Dirac and odd signature 
operators on $Y$. One can say that our correction term $w\,(X,g,\beta)$ plays 
the role of this combination of $\eta$--invariants in the setting when the
operators are periodic rather than $\R$--invariant over the end.

For an integral homology sphere $Y$, Lim~\cite{lim:swcasson} proved that the 
invariant of $Y$ defined in this fashion actually coincides,
up to an overall sign, with the Casson invariant $\lambda (Y)$. The latter 
is defined by counting either irreducible flat $SU(2)$ connections on $Y$ as 
in Taubes~\cite{taubes:casson}, or irreducible $SU(2)$ representations of its 
fundamental group; see Akbulut--McCarthy~\cite{akbulut-mccarthy}. The Casson 
invariant was extended by Furuta and Ohta~\cite{furuta-ohta} to an invariant 
$\lambda_{\,\FO}(X)$ of smooth $4$-manifolds $X$ with the $\Z[\Z]$-homology 
of $S^1 \times S^3$. This invariant counts irreducible flat $SU(2)$ 
connections on $X$ in the sense of the Donaldson gauge 
theory~\cite{donaldson-kronheimer}. Again, this count corresponds to a count 
of irreducible $SU(2)$ representations of $\pi_1 (X)$; in particular, the 
invariant vanishes for the homotopy $S^1 \times S^3$. The second and third 
authors studied 
$\lambda_{\,\FO}(X)$ in earlier papers~\cite{ruberman-saveliev:mappingtori,
ruberman-saveliev:survey}. Now we make the following conjecture, which we 
have verified in many examples.

\begin{theorem2}\label{sw=fo}
For any smooth oriented homology oriented $4$-mani\-fold $X$ with the 
$\Z[\Z]$-homology of $S^1 \times S^3$, one has 
\[
\lambda_{\,\SW} (X) = -\lambda_{\,\FO}(X).
\]
\end{theorem2}  

\noindent
If this conjecture turns out to be true, it will answer several long-standing 
questions in topology, the most striking of which is the vanishing of the 
Rohlin invariant for a homotopy $S^1 \times S^3$. 
See~\cite{ruberman-saveliev:survey} for a longer discussion.

Finally, we point out that Seiberg--Witten and Yang--Mills invariants for 
manifolds with $b^2_+ =0$ have also been considered by other authors.
Okonek--Teleman~\cite{okonek-teleman:b+0} and Teleman~\cite{teleman:definite} 
studied these in connection with problems about curves on complex surfaces 
of type ${\rm VII}_0$.  The topological setup in these papers is more general, 
in that $b^2 = b^2_{-} \neq 0$, and $b^1$ can be bigger than $1$. A related 
preprint of Lobb--Zentner~\cite{lobb-zentner:casson} treats the case of 
negative definite 4-manifolds with non-zero $b^2$ divisible by 4 and $b^1 = 1$, 
and defines an invariant by counting projectively flat $SU(2)$ connections; 
Zentner~\cite{zentner:casson-vanishing} has subsequently shown that this 
invariant actually vanishes. Recently, Fr{\o}yshov~\cite{froyshov:qhs-monopole}
has studied two numerical invariants of a negative definite $4$-manifold $X$ 
with $b^1 (X)=1$. One invariant is the Lefschetz number of a cobordism induced 
map on a version of the monopole Floer homology; its definition requires that 
$X$ has an embedded rational homology sphere $Y$ generating $H_3(X;\Z)$. The 
other invariant is the $h$--invariant, whose definition requires that either 
$b^2(X) = 0$ or a generator of $H_3(X;\Z)$ is represented by an integral 
homology sphere $Y$. It would be of interest to understand the relation of 
all of these works to the present paper. We show that our definition of the 
invariant $\lambda_{\,\SW} (X)$ can be extended to negative definite 
manifolds $X$ with $b^2_{-}(X) \neq 0$, at least under the assumption that 
either $b^2_{-}(X) =1$ or else there is a homology sphere $Y$ generating 
$H_3(X;\Z)$; this extension is described in Section~\ref{S:neg-def}.

Here is a brief outline of the paper.  We begin in Section~\ref{S:moduli} 
with a description of the blown-up Seiberg--Witten moduli space introduced 
in~\cite{KM} to deal with reducible solutions, and establish some 
transversality results for $1$-parameter moduli spaces that will be used 
later in the paper. In Section~\ref{S:correction}, we introduce the Dirac 
operator on periodic-end manifolds, and define the correction term 
$w\,(X,g,\beta)$. Section~\ref{S:fourier} contains the basic analysis of 
the Dirac operator on the infinite cyclic cover $\tilde X$ of a manifold 
$X$, including its Fredholm properties when acting on weighted Sobolev 
spaces $L^2_{k,\delta}\,(\tilde X)$. The main tool here is the 
Fourier--Laplace transform defined in~\cite{T} that relates this periodic 
Dirac operator on $\tilde X$ to a family of twisted Dirac operators on $X$
with parameter $z \in \C^*$. We show that the inverses of this family depend 
meromorphically on $z$, and describe the variation of the poles 
in a generic path of metrics and perturbations. In the following 
Sections~\ref{S:dirac-periodic} and~\ref{S:index-change}, we extend this 
analysis to the case of a manifold with a periodic end modeled on $\tilde X$, 
establishing asymptotics for solutions to the Dirac equation and a change of 
index formula. This formula is applied in Section~\ref{S:metric} to interpret 
the index change as a  spectral flow. Finally, in Section~\ref{S:invariant}, 
we put together the results of the preceding sections to match the change in 
$w\,(X,g,\beta)$ with jumps in $\#\M(X,g,\beta)$ in a generic $1$-parameter 
family of metrics and perturbations, and hence to prove the independence of 
$\lambda_{\,\SW} (X)$ from the choice of metric and generic perturbation.  
Section~\ref{S:rohlin} contains the proof that $\lambda_{\,\SW} (X)$ reduces 
modulo $2$ to the Rohlin invariant; this involves a careful choice of metric 
as in~\cite{RS} and perturbation that is equivariant with respect to the 
well-known involution in Seiberg--Witten theory. Section \ref{S:neg-def} 
describes an extension of $\lambda_{\,\SW} (X)$ to certain negative definite 
manifolds $X$ with $b^1(X) = 1$. The final Section~\ref{S:examples} provides 
some examples of explicit calculations; we remark that all of these are in 
agreement with Conjecture~\ref{sw=fo}.
\\[1ex]

\noindent
\textbf{Acknowledgments:} We thank Kim Fr{\o}yshov, Lev Kapitanski, Claude 
LeBrun, Leonid Parnovski,  and Cliff Taubes for useful discussions and sharing their 
expertise.  We also thank the referee for some perceptive comments and suggestions.


\section{Seiberg--Witten moduli spaces}\label{S:moduli}
Let $X$ be an oriented smooth homology $S^1 \times S^3$, and choose a spin 
structure on $X$. Two different spin structures on $X$ are equivalent as
\spinc structures hence our construction will be independent of this choice. 
Fix a homology orientation on $X$ by fixing a generator $1\in \Z = H^1(X;\Z)$. 
In this section, we will introduce, following~\cite{KM}, the blown-up 
Seiberg--Witten moduli space on $X$ and investigate its dependence on 
the (generic) metric and perturbation. 


\subsection{Definition}
Given a metric $g$ on $X$ and a form $\omega \in \Omega^2_+ (X,i\R)$, 
consider the triples $(A,s,\phi)$ comprised of a $U(1)$--connection $A$
on the determinant bundle of the spin bundle, a real number $s \ge 0$, 
and a positive spinor $\phi$ such that $\|\phi\|_{L^2} = 1$. We will 
slightly abuse notation by viewing $A$ as a form in $\Omega^1 (X,i\R)$
after fixing a trivialization of the determinant bundle. The gauge group 
$\G = \Map (X,S^1)$ acts freely on such triples by the rule $u (A,s,\phi) 
= (A - u^{-1}du,s,u \phi)$. The (perturbed, blown-up) \emph{Seiberg--Witten 
moduli space} consists of the gauge equivalence classes of the triples 
$(A,s,\phi)$ that solve the Seiberg--Witten equations
\begin{equation}\label{E:sw}
\begin{cases}
\; F_A^+ - s^2\,\tau(\phi) = \omega \\
\; D^+_A (X,g)\,(\phi) = 0.
\end{cases}
\end{equation}

The fact that $b^2_+(X) = 0$ allows for the following description of the 
forms $\omega \in \Omega^2_+ (X,i\R)$. We use the standard notation $\H^k$ 
for harmonic $k$-forms.

\begin{lemma}
For any $\omega \in \Omega^2_+ (X,i\R)$ there exists a unique $\beta \in
\Omega^1 (X,i\R)$ such that $d^+ \beta = \omega$, $d^*\beta = 0$, and 
$\beta$ is orthogonal to $\H^1 (X;i\R) \allowbreak \subset \Omega^1 (X,i\R)$.
\end{lemma}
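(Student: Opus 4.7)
The natural plan is to study the first-order linear operator
\[
T = (d^+, d^*) : \Omega^1(X, i\R) \longrightarrow \Omega^2_+(X, i\R) \oplus \Omega^0(X, i\R),
\]
which is elliptic on the closed $4$-manifold $X$, and to deduce the lemma from standard Fredholm theory combined with the topological hypothesis $b^2_+(X) = 0$ and $b^1(X) = 1$.

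First I would compute $\ker T$. On a closed $4$-manifold the identity $d\beta\wedge d\beta = d(\beta\wedge d\beta)$ integrates to zero, and pointwise $d\beta\wedge d\beta = (|d^+\beta|^2 - |d^-\beta|^2)\operatorname{vol}_X$, so $\|d^+\beta\|^2 = \|d^-\beta\|^2$ and hence $\|d\beta\|^2 = 2\|d^+\beta\|^2$. Therefore $T\beta = 0$ forces $d\beta = 0$ and $d^*\beta = 0$, which identifies $\ker T$ with the space of harmonic $1$-forms, canonically $H^1(X;i\R)$.

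Next I would identify $\coker T$ via $\ker T^*$, where $T^*(\omega, f) = d^*\omega + df$. If $d^*\omega + df = 0$, the Hodge decomposition places $d^*\omega \in d^*\Omega^2$ and $df \in d\Omega^0$ in orthogonal summands, forcing each to vanish. Then $f$ is constant, and for self-dual $\omega$ the identity $d^*\omega = -*d\omega$ shows $d\omega = 0$, so $\omega$ is a harmonic self-dual $2$-form. The hypothesis $b^2_+(X) = 0$ then gives $\omega = 0$, so $\ker T^*$ consists of the constants in $\Omega^0$.

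Ellipticity on the closed manifold $X$ then yields $\im T = (\ker T^*)^\perp$. For any $\omega \in \Omega^2_+(X,i\R)$ the pair $(\omega, 0)$ is trivially $L^2$-orthogonal to constants (its $\Omega^0$ component vanishes), so there exists $\beta \in \Omega^1(X,i\R)$ with $d^+\beta = \omega$ and $d^*\beta = 0$. Any two such $\beta$ differ by an element of $\ker T \cong H^1(X;i\R)$, so imposing the orthogonality condition $\beta \perp H^1(X;i\R)$ selects a unique representative. The only step requiring any real computation is the pointwise identity for $d\beta\wedge d\beta$, which is routine in a local orthonormal frame splitting $\Lambda^2 = \Lambda^2_+\oplus\Lambda^2_-$; everything else is a direct application of Hodge theory.
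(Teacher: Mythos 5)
Your argument is correct, and it takes a genuinely different route from the paper's. The paper works directly with the Hodge decomposition of $\omega$: since $H^2(X;i\R) = 0$, one writes $\omega = d\alpha + *d\beta$ and then uses self-duality of $\omega$ together with the uniqueness of Hodge components to conclude $d\alpha = d\beta$, whence $\omega = d\beta + *d\beta = d^+\beta$; orthogonality to $\im d$ and to $H^1$ is imposed afterwards to pin down $\beta$. You instead study the elliptic first-order operator $T = (d^+, d^*)$ on $\Omega^1(X,i\R)$ and identify its kernel with $H^1(X;i\R)$ (via $\int_X d\beta \wedge d\beta = 0$, which forces $\|d^+\beta\| = \|d^-\beta\|$) and its cokernel with the constants (using $b^2_+(X) = 0$), then invoke the Fredholm alternative. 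Both proofs rest on the same topological input — $H^2(X;\R) = 0$ — but the paper's is shorter and more elementary, while yours is more systematic: it makes the index-zero Fredholm structure of $(d^+, d^*)$ explicit, which is the same structure that later underlies the linearization of $\swz$ in Section~\ref{S:regularity}. One small remark: with $i\R$-valued forms the pointwise identity $d\beta \wedge d\beta = \bigl(|d^+\beta|^2 - |d^-\beta|^2\bigr)\operatorname{vol}_X$ picks up an overall minus sign relative to the real-valued case, but this does not affect the conclusion that $\|d^+\beta\| = \|d^-\beta\|$.
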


\begin{proof}
Since $H^2_+ (X;i\R) = 0$, the Hodge decomposition for $\omega$ takes the 
form $\omega = d\alpha + d^*\gamma = d\alpha + *d\beta$ where $\beta = *
\gamma$. Since $*\omega = \omega$, we have $\omega = d\beta + *d\alpha$. 
By the uniqueness of the Hodge decomposition, $\alpha = \beta$ and hence
$\omega = d\beta + *d\beta = d^+ \beta$. One can of course choose $\beta
\in (\im d)^{\perp} = \ker d^*$, which makes it unique up to adding a 
harmonic 1-form in $\H^1 (X;i\R)$.
\end{proof}

In other words, we have a linear isomorphism $d^+: \P \to \Omega^2_+ (X,
i\R)$ with $\P = \ker d^*\,\cap\,\H^1 (X;i\R)^{\perp}$. The vector space 
$\P$ will be referred to as the \emph{space of perturbations}. Given 
a perturbation $\beta \in \P$, the Seiberg--Witten moduli space 
corresponding to $\omega = d^+ \beta$ in~\eqref{E:sw} will be denoted 
$\M (X,g,\beta)$. 


\subsection{Regularity}\label{S:regularity}
Fix an integer $k \ge 3$, and define the (blown up) configuration space 
$\widetilde \B$ as the Hilbert manifold of the $L^2_{k+1}$ gauge equivalence 
classes of triples $(A,s,\phi)$, where $A$ is a $U(1)$--connection on $X$ of 
Sobolev class $L^2_k$, $s$ is a real number, and $\phi$ is a positive $L^2_k$ 
spinor such that $\|\phi\|_{L^2} = 1$. The space $\widetilde\B$ has an 
involution that sends $(A,s,\phi)$ to $(A,-s,\phi)$. We define the 
configuration space $\B$ as the subset of $\widetilde \B$ where $s \ge 0$. 

Inside the configuration space $\widetilde\B$ sits the Hilbert submanifold
$\tmZ$ of the gauge equivalence classes of triples $(A,s,\phi)$ 
with $D^+_A (X,g)\,(\phi) = 0$; see~\cite[Lemma 27.1.1]{KM}.  Again, 
$\tmZ$ has the natural involution $(A,s,\phi)\to (A,-s,\phi)$. 
The condition $s \ge 0$ defines $\mZ \subset \B$, a Hilbert submanifold 
with boundary. The coordinate $s$ is a product factor, and the boundary 
$\p \mZ$ of $\mZ$ occurs at $s = 0$.

Let the map $\swz: \tmZ \to \Omega^2_+ (X,i\R)$ be given by $\swz (A,s,\phi) 
= F^+_A - s^2\,\tau (\phi)$, where $\Omega^2_+ (X,i\R)$ is completed in the
Sobolev $L^2_{k-1}$ norm. (This map is called $\varpi$ in~\cite{KM}). For 
any $\beta$ in the space $\P$ completed in the Sobolev $L^2_k$ norm, define
\[
\widetilde \M (X,g,\beta)\, =\, \swz^{-1}(d^+ \beta)\, \subset\, \tmZ.
\] 
The map $\swz$ is invariant with respect to the involution $(A,s,\phi) 
\to (A,-s,\phi)$, hence we have an induced involution on $\widetilde \M 
(X,g,\beta)$, and the Seiberg--Witten moduli space $\M (X,g,\beta)$ defined 
in the previous section is the intersection $\widetilde\M (X,g,\beta)\,\cap
\,\mZ$. Also define 
\[
\M^{0}(X,g,\beta)\, =\, \widetilde\M (X,g,\beta)\,\cap\,\p\mZ;
\]
the points in $\M^{0}(X,g,\beta)$ will be called \emph{reducible}.

\begin{proposition}\label{P:reg1}
For a generic $\beta \in \P$, the moduli space $\M (X,g,\beta) \allowbreak 
\subset \mathcal Z$ is a compact oriented manifold of dimension zero with empty 
$\M^{0}(X,g,\beta)$.
\end{proposition}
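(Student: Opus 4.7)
The plan is to apply the Sard--Smale transversality theorem to a universal moduli space, separately on the interior of $\mZ$ and on its boundary $\p\mZ = \{s=0\}$. Define
\[
\mathcal{N} \;=\; \{(A,s,\phi,\beta) \in \tmZ \times \P \;:\; \swz(A,s,\phi) \,=\, d^+\beta\},
\]
cut out as the zero set of $\Sigma(A,s,\phi,\beta) = \swz(A,s,\phi) - d^+\beta$ in $\tmZ \times \P$. The lemma just proved identifies $d^+$ with an isomorphism $\P \to L^2_{k-1}(\Omega^2_+(X,i\R))$, so $\Sigma$ is already a submersion when restricted to $\dot\beta$-variations alone, and therefore $\mathcal{N}$ is a smooth Hilbert submanifold of $\tmZ \times \P$ with boundary over $\{s=0\}$. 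That $\tmZ$ itself is a Hilbert submanifold through reducibles, where $\ker D_A^-$ can be non-trivial, is the content of~\cite[Lemma~27.1.1]{KM} and uses unique continuation for the Dirac operator on $X$.

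The projection $\pi : \mathcal{N} \to \P$ is a smooth Fredholm map whose index equals the virtual dimension of the Seiberg--Witten moduli on $X$. Since $X$ has the integral homology of $S^1 \times S^3$ its Euler characteristic and signature both vanish, and $c_1=0$ for the spin structure, so this index is $d = (c_1^2 - 2\chi - 3\sigma)/4 = 0$. Sard--Smale supplies a residual set of regular values $\beta \in \P$; for any such $\beta$ the fiber $\tilde\M(X,g,\beta) = \pi^{-1}(\beta)$ is a smooth zero-dimensional manifold, and restricting to $\mZ = \tmZ \cap \{s\geq 0\}$ gives the same for $\M(X,g,\beta)$.

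To force $\M^0(X,g,\beta)$ to be empty I apply Sard--Smale a second time to the boundary projection $\pi^0 = \pi|_{\p\mathcal{N}} : \p\mathcal{N} \to \P$, where $\p\mathcal{N} = \mathcal{N} \cap \{s=0\}$. Because $s$ is a product coordinate on $\tmZ$ (see~\cite[page~570]{KM}), $\p\mathcal{N}$ is a codimension-one Hilbert submanifold of $\mathcal{N}$. The vector field $\p/\p s$ belongs to $T_{(A,0,\phi,\beta)}\mathcal{N}$ (because $\p\swz/\p s|_{s=0} = -2s\tau(\phi)|_{s=0} = 0$) and to $\ker d\pi$ (the projection ignores $s$), so the restriction to $\p\mathcal{N}$ shrinks the kernel by one without changing the cokernel. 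Hence $\ind(\pi^0) = \ind(\pi) - 1 = -1$, and Sard--Smale yields a residual set of $\beta$'s for which $(\pi^0)^{-1}(\beta) = \M^0(X,g,\beta)$ is a ``manifold of dimension $-1$,'' that is, empty. The intersection of the two residual subsets is the generic set of $\beta$'s asserted in the proposition.

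Compactness and orientability are standard. Compactness of $\M(X,g,\beta)$ follows from the Seiberg--Witten Weitzenb\"ock formula, which together with the unit-norm condition $\|\phi\|_{L^2}=1$ and the perturbed curvature equation yields $C^0$-bounds that bootstrap in Coulomb gauge to control in every Sobolev norm. Orientability is the usual consequence of a trivialization of the determinant line of the Seiberg--Witten Fredholm operator, pinned down by the chosen homology orientation of $X$. The substantive point beyond routine counting is the vanishing of reducibles; the main obstacle to this being automatic is that for $X$ with $b_+^2 = 0$ one cannot perturb away from the reducible wall by a harmonic self-dual form, but the blown-up picture rescues the argument by replacing ``perturb $F_A^+$'' with ``drop one dimension of parameters at $s=0$,'' which is exactly the index drop from $0$ to $-1$ computed above.
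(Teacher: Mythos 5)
Your proof is correct and runs on essentially the same Sard--Smale argument as the paper: the paper applies the transversality theorem directly to the Fredholm map $\swz:\tmZ\to\Omega^2_+(X,i\R)$ (of index $0$) and its boundary restriction $\p\swz$ (of index $-1$), using the isomorphism $d^+:\P\to\Omega^2_+(X,i\R)$ to translate regular values into a residual set of perturbations $\beta$, whereas you package the same two index computations inside a universal moduli space projection $\pi:\mathcal{N}\to\P$ and its restriction $\pi^0$. The index value $0$, the index drop to $-1$ at $s=0$, and the brief `as usual' treatment of compactness and orientation all agree with the paper.
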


\begin{proof}
This is essentially contained in~\cite[Lemma 27.1.1]{KM}. The map $\swz:
\tmZ \to \Omega^2_+ (X,i\R)$ is Fredholm of index zero, and its 
restriction $\p\swz: \p\mZ \to \Omega^2_+ (X,i\R)$ is Fredholm of index 
$-1$. Therefore, for generic $\beta$,\, $\widetilde \M(X,g,\beta) = 
\swz^{-1}(d^+\beta)$ is a manifold of dimension zero and $\M^{0}(X,g,\beta) 
\allowbreak = (\p\swz)^{-1}(d^+\beta)$ is empty.  The moduli space 
$\M(X,g,\beta)$ is compact, and it is oriented as usual by the choice of 
orientation and homology orientation on $X$. 
\end{proof}

Any pair $(g,\beta)$ provided by Proposition~\ref{P:reg1} will be 
called \emph{regular}. Given a regular pair $(g,\beta)$, denote by 
$\#\,\M (X,g,\beta)$ the signed count of points in the (regular) moduli 
space $\M (X,g,\beta)$. 


\subsection{Regularity in families}\label{S:regular}
Let $g_I$ be a smooth path of metrics on $X$ parameterized by $I = [0,1]$, 
and consider the parameterized configuration space 
\[
\widetilde \B_I \; = \; \bigcup_{t \in I}\;\; \{t\}\times \widetilde \B_t,
\]
where $\widetilde \B_t$ stands for a copy of $\widetilde \B$ with respect to 
the metric $g_t$.  We will follow~\cite[page 479]{KM} and slightly  abuse 
notations in regarding $\widetilde \B_t$ as independent of $t$ and identifying 
$\widetilde \B_I$ with  $I \times \widetilde \B_0$. 

\begin{remark}\label{R:id}
The precise identification is as follows, cf.~\cite[Section 3.1]{RS}. For 
any $t \in I$, there is a unique automorphism $b_t: TX \to TX$ that is 
positive, symmetric with respect to $g_0$, and has the property that $g_0 
(u,v) = g_t(b_t(u),b_t(v))$. The map on orthonormal frames induced by 
$b_t$ gives rise to a map $\bar b_t: S_0^{\pm} \to S_t^{\pm}$ of spinor 
bundles associated with metrics $g_0$ and $g_t$. This map is an isomorphism 
preserving the fiberwise length of spinors. The identification $I \times 
\widetilde \B_0 \to \widetilde \B_I$ is then given by $(t,A,s,\phi) \to 
(t,A,s,\bar b_t\,(\phi))$.
\end{remark}

\begin{figure}[!ht]
\centering
\psfrag{Z}{$\mZ_I$}
\psfrag{dZ}{$\p\mZ_I$}
\psfrag{I}{$I$}
\psfrag{dM}{$\M_I$}
\includegraphics{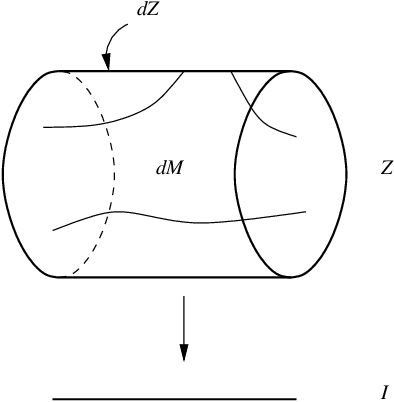}
\caption{Parameterized moduli space}
\label{fig:param}
\end{figure}

Let $\tmZ_t \subset \widetilde \B_t$ be given by $D^+_A(X,g_t)\,(\phi) = 0$.
The spaces $\tmZ_t$ may be assembled into a Hilbert manifold
\[
\tmZ_I\; =\; \bigcup_{t\in I}\;\;\{t\}\times \tmZ_t \subset 
\widetilde{\mathcal B}_I
\]
over $I$ with a natural projection $\pi: \tmZ_I \to I$. Both $\widetilde \B_I$ 
and $\tmZ_I$ admit an involution sending $(t,A,s,\phi)$ to $(t,A,-s,\phi)$. 
The conditions $s \ge 0$ and $s = 0$ define respectively subbundles $\mZ_I 
\subset \tmZ_I$ and $\p \mZ_I\subset \mZ_I$ over $I$; see 
Figure~\ref{fig:param}.

Let $\Omega_I$ be the subspace of $I\times\,\Omega^2 (X,i\R)$ comprised 
of the pairs $(t,\omega)$ such that $\omega$ is self-dual with respect 
to the metric $g_t$. Then we have a fibration $\pi: \Omega_I \to I$ 
that can be trivialized using the maps $b_t$ as in Remark~\ref{R:id}. 
Consider the commutative diagram

\begin{equation}\label{E:cd1}
\begin{CD}
\tmZ_I @ >\swz_I >> \Omega_I \\
@VV\pi V @ VV\pi V \\
I @> 1 >> I
\end{CD}
\end{equation}

\bigskip\noindent
where the map $\swz_I$ is given by $\swz_I\,(t,A,s,\phi) = (t, F^{+_t}_A 
- s^2\,\tau_t (\phi))$. The pre-image of a section $\omega_I = d^+\beta_I: 
I \to \Omega_I$ under $\swz_I$ is the parameterized moduli space 
\[
\widetilde \M_I\; =\; \bigcup_{t \in I}\;\; \{t\}\times \widetilde\M 
(X,g_t,\beta_t).
\] 

\noindent
Let us also consider
\[
\M_I = \widetilde \M_I\,\cap\,\mZ_I \;=\; \bigcup_{t \in I}\;\;\{t\}\times
\M (X,g_t,\beta_t)
\]
and 
\[
\M^{0}_I = \widetilde \M_I\,\cap\,\p \mZ_I \;=\; \bigcup_{t \in I}\;\;\{t\}
\times \M^{0}(X,g_t,\beta_t).
\]

\begin{theorem}\label{T:reg}
Let $g_I$ be a path of metrics and $\beta_0$, $\beta_1 \in \P$ a pair of 
perturbations such that $\M(X,g_0,\beta_0)$ and $\M (X,g_1,\beta_1)$ are 
regular. Then, for a generic path $\beta_t \in \P$ connecting $\beta_0$ 
to $\beta_1$, the parameterized moduli space $\M_I$ is a smooth oriented 
1-manifold with oriented boundary 
\[
\M(X,g_0,\beta_0)\,\cup\,\M(X,g_1,\beta_1)\,\cup\,\M^{0}_I.
\] 
In particular, 
\[
\#\,\M(X,g_1,\beta_1)\, -\, \#\M(X,g_0,\beta_0)\, =\, \#\,\M^{0}_I,
\]
where $\#\,\M^{0}_I$ stands for the signed count of points in $\M^{0}_I$.
\end{theorem}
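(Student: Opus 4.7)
The plan is to carry out a parametric Sard--Smale argument for the parameterized Seiberg--Witten map $\swz_I: \tmZ_I \to \Omega_I$, in direct analogy with the proof of Proposition~\ref{P:reg1}. Let $\P_I$ denote the Banach space of $L^2_k$ paths $\beta_t$ in $\P$, $t \in I$, with prescribed endpoints $\beta_0$ and $\beta_1$, and form the universal parameterized moduli space
\[
\tilde\M^{\mathrm{univ}}_I \;=\; \{\,(t,z,\beta_I) \in \tmZ_I \times \P_I \;\big|\; \swz_I(t,z) = d^+\beta_t\,\}.
\]
First I would verify that this is a smooth Banach submanifold of $\tmZ_I \times \P_I$. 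At any solution $(t,z,\beta_I)$, variations in $\beta_I$ supported near $t$ (and vanishing at $t=0,1$) hit all of $d^+\P = \Omega^2_+(X,i\R)$ inside the fiber $\Omega_t$, so together with the Fredholm linearization of $\swz_I$ the full differential of $\swz_I(t,z) - d^+\beta_t$ is surjective. The analogous statement for the restricted map $\p\swz_I: \p\mZ_I \to \Omega_I$ then follows by the same argument, as in~\cite[Lemma 27.1.1]{KM}.

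Next I would apply the Sard--Smale theorem to the projections $\tilde\M^{\mathrm{univ}}_I \to \P_I$ and its restriction to the reducible stratum. Since $\swz$ is Fredholm of index $0$ on each fiber $\tmZ_t$ and $-1$ on $\p\mZ_t$, the map $\swz_I$ is Fredholm of index $1$ on $\tmZ_I$ and of index $0$ on $\p\mZ_I$, picking up an extra dimension from the parameter $t \in I$. A generic $\beta_I \in \P_I$, chosen arbitrarily close to any preassigned path, is therefore a regular value of both projections. For such $\beta_I$, $\tilde\M_I$ is a smooth $1$-manifold and $\M^{0}_I$ is a discrete set of points lying in the interior of $I$, disjoint from the already-regular endpoint moduli spaces.

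To identify $\p\M_I$, observe that $\M_I = \tilde\M_I \cap \mZ_I$ is transversally cut out inside the Hilbert manifold with boundary $\mZ_I$; hence, as a $1$-manifold with boundary, $\p\M_I$ is the disjoint union of its intersection with $\p\mZ_I$, which is $\M^{0}_I$, and with the two endpoint fibers, which are $\M(X,g_0,\beta_0)$ and $\M(X,g_1,\beta_1)$. Compactness of $\M_I$ is the standard Seiberg--Witten compactness in the blown-up setting of~\cite{KM}, applied fiberwise and combined with continuity in the parameter. Orienting $\M_I$ by the homology orientation of $X$ together with the given orientation on $I$, one checks that the induced boundary orientations agree with the prescribed orientations on $\M(X,g_0,\beta_0)$ and $\M(X,g_1,\beta_1)$ and with a well-defined sign on each point of $\M^{0}_I$; the displayed count identity then follows from $\#\p\M_I = 0$. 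The main obstacle is the transversality analysis along the reducible stratum $\p\mZ_I$, where one must verify that variations in $\beta$ surject onto the cokernel of the linearization even in the presence of possible harmonic spinors of $D^+_A(X,g_t)$ at a reducible; this is the content of~\cite[Lemma 27.1.1]{KM} and its argument adapts verbatim to the family setting because the $t$-direction plays no role in the cokernel computation at a fixed slice.
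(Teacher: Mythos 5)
Your proposal is correct and takes essentially the same approach as the paper: a parametric Sard--Smale argument showing that a generic path $\beta_I$ makes both $\swz_I: \tmZ_I \to \Omega_I$ and its restriction $\p\swz_I: \p\mZ_I \to \Omega_I$ transversal to the section $d^+\beta_I$, with the required surjectivity of the linearization along the reducible stratum coming from \cite[Lemma 27.1.1]{KM}. You phrase this via the universal moduli space over the Banach space of paths $\P_I$ and apply Sard--Smale to the projection, while the paper invokes Smale's approximation theorem \cite[Theorem 3.1]{smale:sard} directly to perturb an arbitrary interpolating section to a transversal one; these are standard equivalent formulations of the same idea, and the rest (Fredholm index bookkeeping, compactness, identification of the oriented boundary, and the vanishing of the signed boundary count) matches the paper.
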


\begin{proof}
Observe that $\chi_I$ is a Fredholm map of index zero, and choose an 
arbitrary section $\omega_I: I\to \Omega_I$ that restricts to $\omega_0 
= d^+\beta_0$ and $\omega_1 = d^+\beta_1$ at the endpoints of $I$. 
According to~\cite[Theorem 3.1]{smale:sard}, it can be approximated by a 
section, called again $\omega_I = d^+ \beta_I$, that is transversal to 
both $\p\swz_I: \p\mZ_I \to \Omega_I$ and $\swz_I: \tmZ_I \to \Omega_I$.
Then $\widetilde \M_I = \swz_I^{-1} (d^+\beta_I)$ is a regular oriented 
1-manifold with boundary $\widetilde \M (X,g_0,\beta_0)\,\cup\,
\widetilde \M(X,g_1,\beta_1)$, and $\M_I$ is a regular oriented 1-manifold 
whose boundary is as claimed.
\end{proof}

Any path $(g_I,\beta_I)$ with regular endpoints provided by 
Theorem~\ref{T:reg} will be called \emph{regular}. We will wish to 
calculate $\#\,\M^{0}_I$ and eventually, compare it with the jumps in 
the index of a certain differential operator; see Theorem~\ref{T:comparison}. 
The calculation of the index change will be greatly simplified if the regular path 
$(g_I,\beta_I)$ is chosen so that $g_I$ is constant near each $t\in I$ 
where $\M^{0}(X,g_t,\beta_t)$ is non-empty; such a path will be called 
\emph{special}. In the Appendix, we will prove the following result. 

\begin{theorem}\label{T:special}
Any regular path $(g_I,\beta_I)$ can be homotoped rel its endpoints to 
a special path.
\end{theorem}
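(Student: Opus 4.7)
The plan is to reduce the problem to a local modification near each reducible, then carry out that modification through a time reparametrization combined with a compensating perturbation.

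By Theorem~\ref{T:reg}, the reducible set $\M^{0}_I$ of the regular path $(g_I,\beta_I)$ is a finite set of points at distinct times $0<t_1<\cdots<t_N<1$. Choose pairwise disjoint closed intervals $J_i=[t_i-\delta,t_i+\delta]\subset(0,1)$, each containing $t_i$ in its interior. It suffices to homotope $(g_I,\beta_I)$ on each $J_i$ rel $\partial J_i$ to a path whose metric is constant on an open neighborhood of every reducible in $J_i$; piecing these local homotopies together, keeping the path unchanged outside $\bigcup J_i$, produces the desired special path.

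Fix $i$ and write $J=J_i$, $t_*=t_i$, $J'=[t_*-\delta/2,t_*+\delta/2]\subset J$, and $J''=[t_*-\delta/3,t_*+\delta/3]\subset J'$. Choose a smooth family of non-decreasing surjections $\psi_s:J\to J$, $s\in[0,1]$, with $\psi_s|_{\partial J}=\id$ and $\psi_0=\id$, such that $\psi_1$ is identically equal to $t_*$ on $J''$ and is a diffeomorphism on $J\setminus J''$. Define the metric homotopy $g^{(s)}_t=g_{\psi_s(t)}$ and the preliminary perturbation homotopy $\beta^{(s)}_t=\beta_{\psi_s(t)}$ using the identification of Remark~\ref{R:id}. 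Then $g^{(1)}$ is constant equal to $g_{t_*}$ on $J''$, but the preliminary pair $(g^{(1)},\beta^{(1)})$ is degenerate on $J''$: it is identically the reducible configuration $(g_{t_*},\beta_{t_*})$, producing a one-parameter family of reducibles.

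To restore regularity, add a compensating perturbation $\eta^{(s)}_t\in\P_{g^{(s)}_t}$ supported on $J'$, vanishing on $\partial J'$ and at $s=0$; the final homotopy is $(g^{(s)}_t,\,\beta^{(s)}_t+\eta^{(s)}_t)$. For $\eta^{(1)}$ chosen generically in the Banach space of admissible such perturbations, a relative version of Theorem~\ref{T:reg}---with the metric path $g^{(1)}$ fixed and only the perturbation varying---makes the parameterized moduli space over $J$ a regular $1$-manifold. Since $\psi_1^{-1}(t_*)=\overline{J''}\subset J'$, on $J\setminus J'$ the preliminary path $(g_{\psi_1(t)},\beta_{\psi_1(t)})$ is merely a diffeomorphic reparametrization of the original regular path away from $t_*$ and hence lies strictly off the reducible locus; because $\eta^{(1)}$ vanishes on $J\setminus J'$, this property is preserved, confining all reducibles of the final path to $J'$.

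The main obstacle is arranging that every reducible in $J'$ lies in the \emph{interior} $J''$, where the metric is constant, rather than in $J'\setminus J''$, where $g^{(1)}$ still varies. Near $\partial J''$ the preliminary path touches the reducible locus $R\subset\P_{g_{t_*}}$ (since $\psi_1(\partial J'')=t_*$), so $\eta^{(1)}$ must be chosen to be nonzero on a neighborhood of $\partial J''$ in a direction transverse to $R$, while still vanishing on $\partial J'$. Using the fact, which follows from the regularity of $(g_I,\beta_I)$ and the index calculation underlying Proposition~\ref{P:reg1}, that $R$ is a smooth codimension-one submanifold of $\P_{g_{t_*}}$ near $\beta_{t_*}$, an explicit $\eta^{(1)}$ of the form $\eta^{(1)}_t=f(t)\,\nu+$ generic correction---where $\nu$ is normal to $R$ at $\beta_{t_*}$ and $f$ is a smooth cutoff with $f(t_*)=0$, $f|_{\partial J''}\neq 0$, and $f|_{\partial J'}=0$---handles both the transversality at $\partial J''$ and the matching condition at $\partial J'$, pushing all reducibles into the interior of $J''$ and completing the local modification.
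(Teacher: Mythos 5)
Your approach is genuinely different from the paper's. The paper works in the universal parameterized space $\Omega_{\met}$ fibered over the space of metrics $\met$, shows that the differential of $\p\swz_{\met}$ at each preimage of the crossing point is injective (so its image is a locally embedded codimension-one sheet), constructs a distinguished \emph{vertical} vector $\nu$ in $T_w\Omega_{\met}$ transverse to the sheet, and then replaces $\gamma$ near $w$ by an explicit path that travels along a vertical segment $a_-\nu \to a_+\nu$ (on which the metric is literally constant), joined to $\gamma$ through the boundary of a small ball. Your proposal instead reparametrizes time, pausing the metric at $t_*$ and then adding a compensating perturbation to restore transversality. The idea is appealing, but there are two genuine gaps.

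First, and most seriously, you assert that the reducible locus $R\subset\P_{g_{t_*}}$ is a smooth codimension-one submanifold near $\beta_{t_*}$. This does not follow from regularity of the path: the value $w=(g_{t_*},\beta_{t_*})$ may have \emph{several} preimages $z_1,\ldots,z_n\in\p\mZ_{\met}$, and while each gives a locally embedded codimension-one sheet $V_k$, the union $\bigcup_k V_k$ (which is the actual reducible locus near $w$) is typically \emph{not} a smooth submanifold. Consequently, "a normal vector $\nu$ to $R$ at $\beta_{t_*}$" is not well defined. This multi-sheet phenomenon is precisely the global difficulty the paper confronts head-on: it proves properness to show there are only finitely many sheets, perturbs $\gamma$ to avoid the codimension-two transverse double-point sets, and argues that the remaining sheets are tangent, so a single vector transverse to all of them exists. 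Your argument needs an analogous step before a single normal direction can be used.

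Second, the normal direction $\nu$ you use lives in the fixed-metric slice $\P_{g_{t_*}}$, but the crucial confinement argument takes place on $J'\setminus J''$, where the metric $g_{\psi_1(t)}$ is \emph{not} $g_{t_*}$. The reducible locus over that portion of the path is a submanifold of a varying slice $\P_{g_{\psi_1(t)}}$, and transversality of $\nu$ in the $t_*$-slice does not by itself control crossings there. (The paper avoids exactly this issue by building $\nu$ as a tangent vector to the total space $\Omega_{\met}$ via Lemma~\ref{L:vert-vector}, so the one-sidedness statement is metric-uniform in a neighborhood of $w$.) Relatedly, your stated conditions on the cutoff $f$ ($f(t_*)=0$, $f|_{\p J''}\neq 0$, $f|_{\p J'}=0$) do not pin down whether $f$ changes sign at $t_*$; if it does not, the modified path does not cross $R$ transversally at $t_*$ but only touches it, and if it does, the sign must be matched on each side with the side on which the original path approaches $R$, which requires exactly the kind of one-sidedness control that the multi-sheet and metric-variation issues obstruct. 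These points would need to be resolved for the argument to go through.
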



\subsection{The reducibles}\label{S:reds}
It will be useful to have a more explicit description of $\M^{0}_I$, as 
much of this paper is concerned with the behavior of the Seiberg--Witten 
equations at reducible points. For any choice of $(g,\beta)$, the space 
$\M^{0}(X,g,\beta)$ consists of the gauge equivalence classes of pairs 
$(A,\phi)$ such that $F_A^+ = d^+ \beta$, $D_A^+ (X,g)\,(\phi) = 0$, and 
$\|\phi\|_{L^2} = 1$. The gauge equivalence classes $[A]$ of connections 
such that $F^+_A = d^+ \beta$ form the circle 
\begin{equation}\label{E:circle}
\beta\, +\, \H^1 (X;i\R)/H^1(X;i\Z).
\end{equation}
For each $[A]$ on this circle such that $\ker D^+_A(X,g) \ne 0$, solutions 
of the equation $D^+_A(X,g)(\phi) = 0$ with $\|\phi\|_{L^2} = 1$ form a 
sphere of dimension $2 \dim_{\C}\ker D^+_A(X,g) - 1$ and, after factoring 
out the residual $S^1$ gauge symmetry, the complex projective space of
(complex) dimension $\dim_{\C}\ker D^+_A(X,g) - 1$. 

In particular, for any regular $(g,\beta)$ the space $\M^0 (X,g,\beta)$ is 
empty, which means that $\ker D^+_A(X,g) = 0$ for all points $[A]$ on the 
circle~\eqref{E:circle}. Furthermore, for any pair $(g,\beta)$ in a regular 
path $(g_I,\beta_I)$ the space $\M^0 (X,g,\beta)$ is at most finite, which 
implies that $\ker D^+_A (X,g)\ne 0$ for at most finitely many points $[A]$ 
on the circle \eqref{E:circle}, and in fact that $\ker D^+_A(X,g) = \C$ at 
each of them.

Parametrize the circle \eqref{E:circle} by choosing a smooth function
$f: X \to S^1$ such that 
\begin{equation}\label{E:f}
[\,f^*(d\theta)] = 1\, \in\, H^1(X;\Z) 
\end{equation}
corresponding to our choice of homology orientation on $X$. Then the above 
discussion can be restated as follows in terms of the family 
\begin{equation}\label{E:Dz}
D^+_A (X,g) = D^+(X,g) + \beta - \ln z\cdot f^*(d\theta)\quad\text{with}\quad
|z| = 1.
\end{equation}

\begin{proposition}\label{P:kernels}
For any regular $(g,\beta)$, all operators in the family \eqref{E:Dz} 
have zero kernels. For any point $(g,\beta)$ in a regular path 
$(g_I,\beta_I)$, at most finitely many of the operators \eqref{E:Dz} have 
non-zero kernel, and that kernel is isomorphic to $\C$.
\end{proposition}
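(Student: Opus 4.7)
The plan is to read the proposition as a direct translation of the description of $\M^0(X,g,\beta)$ given in the paragraphs immediately preceding the statement, using the parameterization of the circle of reducible connection classes \eqref{E:circle} by the family \eqref{E:Dz}. Under this parameterization a point $z$ on the unit circle corresponds to the gauge equivalence class of the connection $\beta - \ln z\, f^*(d\theta)$, and the nontriviality of $\ker D^+_A$ for a given $[A]$ on the circle is exactly the nontriviality of the kernel of the corresponding operator in \eqref{E:Dz}.

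For the first claim, I would argue by contradiction. Suppose $(g,\beta)$ is regular and some operator in the family \eqref{E:Dz} has a nontrivial kernel. Choosing a unit $L^2$ spinor $\phi \in \ker D^+_A$ produces a triple $(A,0,\phi)$ satisfying $F_A^+ = d^+\beta$, $D^+_A \phi = 0$, and $\|\phi\|_{L^2} = 1$. Its gauge equivalence class then lies in $\M^0(X,g,\beta)$, contradicting Proposition~\ref{P:reg1}, which asserts that this moduli space is empty for regular $(g,\beta)$.

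For the second claim, fix a point $(g,\beta) = (g_t,\beta_t)$ on a regular path $(g_I,\beta_I)$. By Theorem~\ref{T:reg}, the parameterized reducible locus $\M^0_I$ is a finite set of points — it appears as part of the boundary of the oriented 1-manifold $\M_I$, with its signed count $\#\M^0_I$ well-defined. Since the slice $\M^0(X,g,\beta)$ injects into $\M^0_I$ as the fiber over $\{t\}$, it is itself finite. On the other hand, the discussion preceding the proposition identifies $\M^0(X,g,\beta)$ with a disjoint union of complex projective spaces $\mathbb{CP}^{\dim_\C \ker D^+_A - 1}$, indexed by those $[A]$ on the circle \eqref{E:circle} with $\ker D^+_A \neq 0$. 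For such a disjoint union to be finite, every summand must be a single point — forcing $\dim_\C \ker D^+_A = 1$ — and the indexing set must be finite. Translating back through the parameterization $z \leftrightarrow [A]$ gives the claimed conclusion.

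The substantive content — the bundle-of-projective-spaces description of $\M^0$, and the finiteness of $\M^0_I$ — has already been established in the material leading up to the proposition, so I do not anticipate any serious obstacle. The proposition is essentially a repackaging of those facts into the spectral-theoretic language of the family \eqref{E:Dz}, which will be convenient for the Fourier--Laplace analysis of later sections.
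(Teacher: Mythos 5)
Your proof is correct and takes essentially the same route as the paper's own argument, which is laid out in Section~\ref{S:reds} (``The reducibles'') immediately preceding the proposition: regularity empties $\M^0(X,g,\beta)$, a regular path leaves $\M^0_I$ at most finite (your appeal to Theorem~\ref{T:reg} and compactness of the parameterized moduli space $\M_I$ fills in why, which the paper leaves implicit), and each fiber $\M^0(X,g_t,\beta_t)$, being a disjoint union of the projective spaces $\mathbb{CP}^{\,\dim_{\C}\ker D^+_A - 1}$ over the finitely many bad $[A]$'s, can only be finite if every such kernel is one-dimensional. It is worth noting that the paper later supplies a second, more self-contained proof of the $\ker\cong\C$ claim in Remark~\ref{R:dim=1} of Section~\ref{S:MI} via the linearization system~\eqref{E:sys1}: if $\dim_{\C}\ker D^+_A(X,g) > 1$, one could pick a nonzero $\psi\in\ker D^+_A(X,g)$ with $\langle\phi,\psi\rangle_{L^2}=0$, giving a nonzero solution $(0,0,\psi)$ of~\eqref{E:sys1} and contradicting the injectivity of the linearized operator $\D$ that transversality enforces. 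That version bypasses the compactness of $\M_I$ that your finiteness step quietly uses, and the same linearization data is needed anyway for the orientation comparison in Section~\ref{S:signs}; your version is more purely topological and stays entirely within the setup of Section~\ref{S:moduli}.
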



\section{The correction term}\label{S:correction}
Let $X$ be an oriented smooth spin homology $S^1 \times S^3$ with a fixed
homo\-logy orientation. In this section, for any regular pair $(g,\beta)$, 
we will introduce the correction term $w(X,g,\beta)$ and will prove that 
it is well defined. 


\subsection{End-periodic manifolds}\label{S:epmflds}
Let $Y \subset X$ be a connected spin 3-mani\-fold dual to the generator of 
$H^1(X;\Z)$ so that $X$ cut open along $Y$ is a spin homology cobordism $W$ 
from $Y$ to itself. Corresponding to the isomorphism $H_1 (X;\Z) \to \Z$ is 
the infinite cyclic cover 
\begin{equation}\label{E:cover}
\tilde X = \;\ldots \cup\;W_{-1}\;\cup\;W_0\;\cup W_1\;\cup \ldots  
\quad\text{with all}\quad W_n = W. 
\end{equation}
By an \emph{end--periodic (spin) manifold} whose end is modeled on $\tilde X$ 
we will mean any manifold $Z_+ = Z \cup \tilde X_+$, where $Z$ is a smooth 
compact spin manifold with spin boundary $\p Z = Y$, and
\[ 
\tilde X_+ = \; W_0\;\cup\, W_1\;\cup\, W_2\;\cup \ldots
\quad\text{with all}\quad W_n = W. 
\] 
Any metric $g$ on $X$ naturally lifts to a periodic metric on $\tilde X_+$ 
and extends to an end--periodic metric on $Z_+$, again called $g$. The same 
holds for the perturbation 1--forms $\beta \in \P$.
 

\subsection{End-periodic Dirac operators}
Given an end-periodic manifold $Z_+$, with any choice of metric $g$, 
perturbation 1-form $\beta$, and spin structure as above one associates 
the perturbed Dirac operator 
\begin{equation}\label{E:dirac}
D^+(Z_+,g,\beta):= D^+ (Z_+,g) + \beta:\; L^2_1\,(Z_+,S^+) \to L^2 (Z_+,S^-).
\end{equation}

\begin{theorem}\label{T:taubes2}
The operator \eqref{E:dirac} is Fredholm for any regular pair $(g,\beta)$ on 
the manifold $X$.
\end{theorem}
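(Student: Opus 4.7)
The plan is to reduce the Fredholm question to the spectral information already established in Proposition~\ref{P:kernels}, via Taubes' Fourier--Laplace criterion for end-periodic elliptic operators. First I would note that $D^+(Z_+,g,\beta)$ is an end-periodic elliptic operator of order one in the sense of~\cite{T}: the operator equals $D^+(\tilde X_+,g,\beta)$ outside a compact set, and the latter is translation-periodic with respect to the deck transformation on $\tilde X_+$. By Taubes' theorem, Fredholmness on $L^2$ is equivalent to invertibility on the unit circle of the associated Fourier--Laplace family of operators on the compact quotient $X$.

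Next I would identify that family explicitly. Pulling back by $z^{-f}$ (with $f:X\to S^1$ the function from \eqref{E:f}) conjugates the periodic operator on $\tilde X_+$ to a complex one-parameter family on $X$; the standard computation shows that the conjugated operator is precisely
\[
D^+(X,g) + \beta - \ln z\cdot f^*(d\theta),
\]
i.e.\ the family \eqref{E:Dz}. Each member of this family is a twisted Dirac operator on the closed spin $4$-manifold $X$, hence Fredholm, and its index is computed by Atiyah--Singer; since twisting by a flat (unit modulus) line bundle does not change the index and $\operatorname{ind} D^+(X,g)=0$ for a homology $S^1\times S^3$, every operator in the family has index zero.

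Now invoke Proposition~\ref{P:kernels}: for a regular pair $(g,\beta)$, every operator in the family \eqref{E:Dz} has trivial kernel on the circle $|z|=1$. Combined with index zero, this gives invertibility at each point of the circle. The family depends continuously (in fact holomorphically) on $z$, so by compactness of the unit circle the inverses are uniformly bounded; this is exactly the hypothesis of Taubes' Fredholm criterion, so $D^+(Z_+,g,\beta)$ is Fredholm on $L^2$.

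The main obstacle, to my mind, is bookkeeping rather than ideas: one must verify carefully that the Fourier--Laplace conjugation really produces the twist $-\ln z\cdot f^*(d\theta)$ (this is where the choice of $f$ realizing the generator of $H^1(X;\Z)$ enters) and that the analytic hypotheses of Taubes' theorem are met for our perturbed operator, in particular that the zeroth-order perturbation $\beta$ is compatible with the periodic structure on the end. Once these identifications are in place, the Fredholm conclusion is immediate from Proposition~\ref{P:kernels}.
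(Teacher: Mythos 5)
Your proof is correct and follows essentially the same route as the paper: reduce to Taubes' Fourier--Laplace criterion (his Lemma 4.3), observe that the relevant family on $X$ is \eqref{E:Dz} consisting of index-zero operators, and invoke Proposition~\ref{P:kernels} to get invertibility on the unit circle. The paper's proof is more compressed but uses exactly this chain of reasoning.
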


\begin{proof}
The operator $D^+(X,g) + \beta$ has zero index. Therefore, according to 
Taubes~\cite[Lemma 4.3]{T}, the operator \eqref{E:dirac} is Fredholm if
and only if the kernels of the operators in the family \eqref{E:Dz} 
vanish for any choice of smooth function $f: X \to S^1$ satisfying 
\eqref{E:f}. But for a regular $(g,\beta)$, these kernels vanish by 
Proposition~\ref{P:kernels}. Note that the statement of Lemma 4.3 in 
\cite{T} should read $\delta \in \R$ rather than $\delta \in \R^*$ 
(interpreted to mean $\R \setminus \{0\}$), which can be confirmed by 
a reading of the proof of that lemma. This typo was pointed out to us 
by the referee.
\end{proof}

In Sections~\ref{S:fourier} and~\ref{S:dirac-periodic} below, we 
will explain in detail the basic analytical machinery behind Theorem 
\ref{T:taubes2} and re-prove it in more general situation; see Corollary 
\ref{C:taubes}. 


\subsection{Definition of the correction term}
Let $Z_+ = Z \cup \tilde X_+$ be an end-periodic manifold and $(g,\beta)$ 
a regular pair on $X$ so that the operator \eqref{E:dirac} is Fredholm by 
Theorem \ref{T:taubes2}. Define the \emph{correction term}
\[
w\,(X,g,\beta)\, =\, \ind_{\,\C} D^+ (Z_+,g,\beta)\, + \,\sign (Z)/8.
\]
Note that in general $w\,(X,g,\beta)$ will be a rational number.

\begin{proposition}\label{P:3.2}
The correction term $w\,(X,g,\beta)$ is independent of the choices of $Z$ 
and $Y \subset X$, and the way $g$, $\beta$, and the spin structure are 
extended to $Z$. 
\end{proposition}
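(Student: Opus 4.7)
The plan is to establish invariance by varying one choice at a time, reducing each case to an index computation on a closed spin $4$-manifold via an excision/relative index formula for end-periodic Dirac operators.

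The main case is when two compact spin $4$-manifolds $Z_1,Z_2$ share the same boundary $Y$, equipped with extensions of $(g,\beta)$ and of the spin structure to $Z_{i,+}=Z_i\cup\tilde X_+$ that agree on the periodic end. Form the closed spin $4$-manifold $M=Z_1\cup_Y(-Z_2)$, with metric and $1$-form inherited by gluing (after an arbitrarily small smoothing near $Y$ that does not affect any index). Taubes' Fourier--Laplace framework~\cite{T}, to be reviewed in Sections~\ref{S:fourier}--\ref{S:dirac-periodic}, yields, via a parametrix--patching argument, the excision identity
\[
\ind_{\,\C} D^+(Z_{1,+},g,\beta)\,-\,\ind_{\,\C} D^+(Z_{2,+},g,\beta)\;=\;\ind_{\,\C} D^+(M).
\]
By Atiyah--Singer on a closed spin $4$-manifold, the right--hand side equals $-\sign(M)/8$, and Novikov additivity gives $\sign(M)=\sign(Z_1)-\sign(Z_2)$. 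Rearranging yields
\[
\ind_{\,\C} D^+(Z_{1,+})+\sign(Z_1)/8\;=\;\ind_{\,\C} D^+(Z_{2,+})+\sign(Z_2)/8,
\]
which is the required equality.

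The remaining ambiguities reduce to this case. A different cut $Y'\subset X$ dual to the generator can be shifted past $Y$ by finitely many fundamental domains of the infinite cyclic cover, and absorbing the region between $Y$ and $Y'$ into the compact piece turns a $Y'$-based description into a $Y$-based one with the same end-periodic operator. Two extensions of $(g,\beta)$ agreeing on the periodic end can be joined by a smooth path, along which the Fredholm criterion of Theorem~\ref{T:taubes2} (which depends only on the end) remains satisfied, so $\ind_{\,\C} D^+(Z_+,g,\beta)$ is locally --- hence globally --- constant, while $\sign(Z)$ is a purely topological quantity. Finally, the index of $D^+(M)$ is the $\hat A$--genus, which depends only on the tangent bundle and is therefore unaffected by which spin refinement is used on $Z$.

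The main obstacle is justifying the excision identity above: since the periodic ends of $Z_{1,+}$ and $Z_{2,+}$ are identical, their contributions to the Fredholm index ought to cancel, but proving this rigorously calls for a careful parametrix construction in the weighted Sobolev framework of Section~\ref{S:fourier}, in which approximate inverses for $D^+(M)$ and $D^+(Z_{2,+})$ are glued to produce an approximate inverse for $D^+(Z_{1,+})$. Once this formula is in place, the remainder of the argument is topological.
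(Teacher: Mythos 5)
Your treatment of the dependence on $Z$ and on the extensions matches the paper's: form a closed spin $4$-manifold by gluing two copies of compact pieces along $Y$, apply excision and the Atiyah--Singer index theorem, and conclude via Novikov additivity. (The paper handles the extension-dependence inside the same excision computation rather than by a deformation argument as you do, but both work.) The spin-structure remark is also correct.

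The genuine gap is in the independence of $Y$. You correctly observe that after shifting the lift of $Y'$ so that it is disjoint from the lift of $Y$ in $\tilde X$, the region $V$ between them can be absorbed into the compact piece, and then the end-periodic operator is literally the same as before. But the correction term carries a $\sign(Z)/8$ summand, and under this reabsorption $Z$ gets replaced by $Z \cup V$; Novikov additivity then gives an extra $\sign(V)/8$. So your argument only proves $w$ is independent of $Y$ modulo the claim that $\sign(V)=0$, and that claim is neither automatic nor mentioned in your proposal. It is in fact the crux of the $Y$-independence and requires a separate topological argument: $V$ is disjoint from its image under a sufficiently high prime power of the covering translation, so it embeds in a finite cyclic cover of $X$ of prime-power order; that cover is a rational homology $S^1 \times S^3$ (by a knot-theoretic transfer argument, cf.\ Gordon's survey~\cite{gordon:aspects}), hence its $2$-dimensional rational intersection form is zero, hence $\sign(V)=0$. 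Without this step the proposal does not establish well-definedness.
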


\begin{proof}
Given two choices, $Z$ and $Z'$ with $\p Z = \p Z' = Y$, we use the 
excision principle -- see Section~\ref{S:excision} -- and the index 
theorem to obtain
\begin{multline}\notag
\ind_{\,\C} D^+ (Z'_+,g,\beta) - \ind_{\,\C} D^+ (Z_+,g,\beta) =
\ind_{\,\C} D^+ (- Z \cup Z')\\ = - \frac 1 {24}\,\int_{- Z \cup Z'} p_1
= - \frac 1 8\,\sign (-Z \cup Z') = \frac 1 8\,\sign (Z) - \frac 1 8\,
\sign (Z'),
\end{multline} 
which proves that $w(X,g,\beta)$ is independent of the choices of $Z$ and
the extensions.  

Next, let $Y$ and $Y'$ be two choices for cutting $X$ open, and choose a 
lift of each to the infinite cyclic cover $\tilde X$. Call the lifts $Y$
and $Y'$ again. Since $Y$ and $Y'$ are both compact, one can apply the 
covering translation to $Y'$ sufficiently many times to ensure that $Y$ 
and $Y'$ are disjoint. Both $Y$ and $Y'$ separate $\tilde X$ hence they 
become the boundary components of a spin cobordism $V \subset \tilde X$.
We claim that the signature of $V$ is zero. This fact is well-known, but 
we do not know a precise reference; compare~\cite{ruberman:ds} 
and~\cite{ogasa:ribbon-mu}.

To verify the claim, note that $V$ is disjoint from its image under a 
sufficiently high \emph{prime} power $p$ of the covering translation, and 
hence the projection of $V$ into the $p$-fold cyclic cover of $X$ is an 
embedding.  But a standard argument in knot 
theory~\cite[\S 5]{gordon:aspects} says that this $p$-fold cover is a 
rational homology $S^1 \times S^3$, so that the intersection form on its 
2-dimensional rational homology vanishes identically.  It follows that 
the intersection form on $V$ vanishes as well, so the signature of $V$ 
is zero.  That $w (X,g,\beta)$ is independent of the choice of $Y$ now 
follows by the argument in the previous paragraph applied to $Z' = 
Z \cup V$.
\end{proof}


\section{Dirac operators on infinite cyclic covers}\label{S:fourier}
In the definition of the correction term $w\,(X,g,\beta)$ in Section
\ref{S:correction}, we were able to avoid most of the analysis on 
end-periodic manifolds by simply quoting Theorem \ref{T:taubes2}, 
which is essentially due to Taubes \cite{T}. Getting a firm grip on 
this analysis and, in particular, on the ideas involved in the proof 
of Theorem \ref{T:taubes2}, becomes essential once we wish to prove 
that the difference $\#\M (X,g,\beta) - w (X,g,\beta)$ is independent 
of the choice of regular $(g,\beta)$. We will study this analysis in 
the next four sections. 

This section is dedicated to the Fourier--Laplace transform (following 
Taubes~\cite{T}) and the role it plays in establishing Fredholmness of 
the Dirac operator on the infinite cyclic cover $\tilde X \to X$.  The 
same transform arises in related contexts; for example the terminology 
`$Z$--transform' is widely used in the literature 
(e.g.~\cite{jury:z-transform}). A number of proofs in this section are 
standard, and are largely omitted.

 
\subsection{Infinite cyclic cover}\label{S:furl} 
Let $X$ be an oriented smooth spin homology $S^1 \times S^3$ with a fixed 
homology orientation, and $f: X \to S^1$ a smooth function satisfying 
\eqref{E:f}. Let $\tilde X$ be the infinite cyclic cover as in \eqref{E:cover}
and $\tilde f: \tilde X \to \R$ a lift of $f$ such that $\tilde f (x + n) = 
\tilde f(x) + n$. Here, $x \mapsto x + 1$ stands for the covering translation 
$\tilde X \to \tilde X$. Any metric $g$ on $X$ lifts to a periodic metric on 
$\tilde X$ again called $g$, and the same holds for the spin structure.

Note that in the product case, when $X = S^1 \times Y$ with the product 
metric and spin structure, and $Y$ is an integral homology 3-sphere, 
one has $\tilde X = \R \times Y$. The function $\tilde f: \R \times Y 
\to \R$ can then be chosen to be $\tilde f (\theta,y) = \theta$.  

 
\subsection{Definition of the Fourier--Laplace transform} 
Given a spinor $u \in C^{\infty}_0 (\tilde X,S^{\pm})$ and a complex number 
$\mu \in \C$, the \emph{Fourier--Laplace transform} of $u$ is defined as 
\[ 
\hat u_{\mu}(x) = e^{\,\mu \tilde f(x)}\,\sum_{n = -\infty}^{\infty}  
e^{\,\mu n} u(x+n), 
\] 
where $x+n$ denotes the result of applying to $x \in \tilde X$ the 
covering translation $n$ times. Since $u$ has compact support, the 
above sum is finite. One can easily check that $\hat u_{\mu}(x + 1) =  
\hat u_{\mu} (x)$ for all $x \in \tilde X$. Therefore, for every  
$\mu\in \C$, we have a well defined spinor $\hat u_{\mu}$ over $X$.  
Note that the spinor $\hat u_{\mu}$ depends analytically on $\mu$,  
and that for any $x\in X$, we have  
\begin{equation}\label{E:2pi} 
\hat u_{\mu + 2\pi i}\,(x) = e^{\,2\pi i\, f(x)}\,\hat u_{\mu}(x). 
\end{equation}

In order to recover $u$ from its Fourier--Laplace transform, we do not 
need to know $\hat u_{\mu}$ for all $\mu \in \C$; in fact, it suffices 
to know $\hat u_{\mu}$ for $\mu$ in just one interval of the form 
$I(\nu) = \{\,\nu + i\theta\;|\; 0\le \theta \le 2\pi\,\}$ with $\nu 
\in \C$. The formula is as follows\,: 
\begin{equation}\label{E:zinv}
u(x + n) = \frac 1 {2\pi i}\,\int_{I(\nu)}\; e^{-\mu (f(x) + n)}\, 
\hat u_{\mu}(x)\,d\mu, 
\end{equation} 
where $x \in W_0$ and $n$ is an arbitrary integer. This can be checked by 
direct substitution. 

 
\subsection{The Fourier-Laplace transform in weighted Sobolev spaces} 
Given $\delta \in \R$ and an integer $k \ge 0$, we will say that $u \in 
L^2_{k,\delta}\, (\tilde X,S^{\pm})$ if and only if $e^{\delta\tilde f}\,u \in  
L^2_k\,(\tilde X,S^{\pm})$, and let
\[ 
\| u \|_{L^2_{k,\delta}\,(\tilde X,S^{\pm})} =  
\|\,e^{\delta\tilde f}\,u \|_{L^2_k\,(\tilde X,S^{\pm})}.  
\] 
We wish to extend our definition of the Fourier--Laplace transform to these 
weighted Sobolev spaces. For the sake of brevity, we will often omit $S^{\pm}$ 
from our notations and write $L^2\,(\tilde X)$ for $L^2\,(\tilde X,S^{\pm})$, 
etc. The basic observations are summarized as follows.

\begin{proposition}\label{P:reed1}
For any $u\in C_0^{\infty}(\tilde X)$ and $\nu \in \C$, the restriction  
of the family $\hat u_{\mu}$ to the interval $I(\nu)$ belongs to the Hilbert 
space $L^2 (I(\nu),L^2_k (X))$. Moreover, there is a constant $C$ such that 
\[
\|\,\hat u_{\mu}|_{I(\nu)}\|^2_{\,L^2(I(\nu),L^2_k (X))}\; \le\;
C\cdot\|u\|^{\,2}_{\,L^2_{k,\delta}\,(\tilde X)}
\quad \text{with}\quad \delta = \Re\,(\nu).
\]
\end{proposition}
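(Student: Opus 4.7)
The plan is to reduce everything to Parseval's identity on the circle by twisting out the weight. Set
\[
\tilde v(x) = e^{\delta\tilde f(x)}\,u(x),
\]
so that $\|u\|^{2}_{L^2_{k,\delta}(\tilde X)} = \|\tilde v\|^{2}_{L^2_k(\tilde X)}$ by definition. Writing $\nu = \delta + i\eta$ and parameterizing $\mu = \nu + i\theta$ by $\theta \in [0,2\pi]$, set $s = \eta + \theta$; this is a real parameter varying over an interval of length $2\pi$. Using $\tilde f(x+n) = \tilde f(x) + n$, a direct rearrangement of the defining sum for the Fourier--Laplace transform yields
\[
\hat u_\mu(x) = e^{is\tilde f(x)}\,h_s(x), \qquad h_s(x) = \sum_{n\in\Z} e^{isn}\,\tilde v(x+n),
\]
where the sum is finite since $u$ has compact support. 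Both $e^{is\tilde f}$ and $h_s$ are twisted-periodic on $\tilde X$ with opposite characters, so their product $\hat u_\mu$ is genuinely periodic and descends to $X$; moreover $|\nabla^j h_s|$ and $|\nabla^j \hat u_\mu|$ are periodic and well-defined on $X$.

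Next I would bound the $L^2_k(X)$ norm of $\hat u_\mu$ in terms of $h_s$. Applying the product rule iteratively and using that $d\tilde f$ is periodic (hence bounded) and that $s$ ranges in the bounded interval $[\eta,\eta+2\pi]$, one obtains a pointwise estimate
\[
|\nabla^j \hat u_\mu(x)|^{2}\;\le\;C\,\sum_{i\le j}|\nabla^i h_s(x)|^{2}
\]
with $C$ depending only on $k$, $\Im(\nu)$, and the geometry of $X$. The compact support of $u$ justifies termwise differentiation, giving $\nabla^i h_s(x) = \sum_n e^{isn}(\nabla^i \tilde v)(x+n)$, which is the same type of twisted periodization with $\tilde v$ replaced by $\nabla^i\tilde v$.

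Finally I would apply Parseval's identity in $s$ over an interval of length $2\pi$:
\[
\int_0^{2\pi} |\nabla^i h_s(x)|^{2}\,ds \;=\; 2\pi\sum_{n\in\Z}|(\nabla^i\tilde v)(x+n)|^{2}.
\]
Integrating over $x\in W_0\cong X$ unfolds the sum over translates into an integral over all of $\tilde X$, yielding $2\pi\,\|\nabla^i\tilde v\|^{2}_{L^2(\tilde X)}$. Summing over $i \le k$ and combining with the pointwise estimate produces the stated inequality with constant $2\pi C$. The main (but essentially routine) obstacle is the bookkeeping of the $s$-dependent prefactors arising from differentiating $e^{is\tilde f}$; these are uniformly bounded on $I(\nu)$ because both $s$ and $d\tilde f$ lie in bounded ranges, so no genuine analytic difficulty occurs.
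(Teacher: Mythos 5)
Your argument is correct, and it is essentially the ``standard argument of Fourier analysis'' (\`a la Reed--Simon) that the paper cites but does not spell out: twist the weight into a new spinor $\tilde v = e^{\delta\tilde f}u$, factor $\hat u_\mu$ as a unimodular phase $e^{is\tilde f}$ times the twisted periodization $h_s = \sum_n e^{isn}\tilde v(\cdot+n)$, observe that the phase costs only factors of $s\,\nabla^a\tilde f$ (bounded on $I(\nu)$ by periodicity of $d\tilde f$), and then apply Parseval in $s$ over a period to unfold the sum over translates into the $L^2_k(\tilde X)$ norm. All the small checks you flag — that $h_s$ is twisted-periodic with character opposite to $e^{is\tilde f}$, that the periodic connection commutes with the deck action so $\nabla^i h_s = \sum_n e^{isn}(\nabla^i\tilde v)(\cdot+n)$, that the volume form and connection are periodic so integrating over $W_0$ and summing translates gives the integral over $\tilde X$, and that compact support of $u$ makes all sums finite — go through as you say. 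So your proposal is a faithful, complete realization of the reference the paper gives; it is not a different route.
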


Proposition~\ref{P:reed1} is proved using standard arguments of Fourier 
analysis (see for instance~\cite[page 290]{reed-simon}) that 
readily extend to the end-periodic case. It shows that, for any $\nu 
\in \C$ with $\Re\,(\nu) = \delta \in \R$, the assignment $u \mapsto 
\hat u_{\mu}|_{I(\nu)}$ can be uniquely extended to bounded linear 
operators  
\begin{equation}\label{E:Az} 
A(\nu): L^2_{k,\delta} (\tilde X) \to L^2(I(\nu),L^2_k(X)),\quad k 
\in \Z_+. 
\end{equation} 
 The following proposition is proved using formula \eqref{E:zinv} and the 
Parseval relation; see~\cite[page 290]{reed-simon} and~\cite[Lemma 2]{nazarov} 
in the product case. 

\begin{proposition} 
For any $\nu \in \C$ the operators~\eqref{E:Az} are linear isomorphisms.
\end{proposition}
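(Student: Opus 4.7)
The plan is to reduce to the purely imaginary case $\nu = 0$ and recognize $A(0)$ as a classical Fourier series on $\Z$, for which invertibility is Parseval's theorem. The isometry $u \mapsto w := e^{\delta \tilde f} u$ carries $L^2_{k,\delta}(\tilde X)$ onto $L^2_k(\tilde X)$, and a direct substitution into the defining sum gives $\hat u_\mu(x) = \hat w_{\mu - \delta}(x)$. Together with the periodicity relation~\eqref{E:2pi} and a unitary translation of the $\theta$-coordinate, this reduces matters to showing that $A(0): L^2_k(\tilde X) \to L^2(I(0), L^2_k(X))$ is an isomorphism.

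For this, fix a fundamental domain $W_0 \subset \tilde X$ for the deck transformation. With $\mu = i\theta$ and $x \in W_0$,
\[
\hat u_{i\theta}(x) \;=\; e^{i\theta \tilde f(x)} \sum_{n\in \Z} e^{i\theta n}\,u(x+n),
\]
and the prefactor $e^{i\theta \tilde f(x)}$ acts as a pointwise unitary multiplier. Modulo this isometry, $A(0)$ is the classical Fourier series map $(u_n) \mapsto \sum_n e^{i\theta n} u_n$, which is a Hilbert-space isomorphism $\ell^2(\Z, L^2(W_0)) \to L^2(S^1, L^2(W_0))$ by Parseval. The inverse is exactly the integration formula~\eqref{E:zinv}, as one checks on $C_0^\infty(\tilde X)$ by direct substitution. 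To promote this to the $L^2_k$-level, differentiate the defining sum to obtain the intertwining identity $\widehat{\nabla u}_\mu = (\nabla - \mu\,df)\,\hat u_\mu$; since $\mu$ ranges over the compact set $I(\nu)$, the lower-order perturbation $\mu\,df$ is uniformly bounded, and iterating gives the norm equivalence
\[
\|u\|_{L^2_{k,\delta}(\tilde X)}^2 \;\asymp\; \int_{I(\nu)} \|\hat u_\mu\|_{L^2_k(X)}^2\,|d\mu|,
\]
from which boundedness of $A(\nu)$ and of its inverse both follow.

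The main technical obstacle is this last step of Sobolev bookkeeping: Parseval immediately gives the $L^2$ isomorphism, but passage to $L^2_k$ requires combining it with the derivative intertwining and keeping uniform control of constants as $\mu$ varies over $I(\nu)$. In the product case $X = S^1 \times Y$ this reduces exactly to~\cite[Lemma 2]{nazarov}, and the argument extends to our general periodic setting by the same estimates, using the periodicity of the metric and spin connection to absorb boundary derivatives across the translates of $W_0$.
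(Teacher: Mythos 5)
Your proposal is correct and follows essentially the same line as the paper, which simply cites the Parseval relation together with the inversion formula~\eqref{E:zinv} (referring to~\cite{reed-simon} and to~\cite{nazarov} in the product case) without further detail. Your reduction to $\nu = 0$ via the weight isometry and the twist~\eqref{E:2pi} is a tidy organizational touch, and the derivative intertwining $\widehat{\nabla u}_\mu = (\nabla - \mu\,df)\hat u_\mu$ is exactly the step needed to push the $L^2$ Parseval isomorphism up to $L^2_k$.
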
 
 
Finally, for use in Section~\ref{S:residues}, it will be helpful to know 
that the Fourier--Laplace transform of a spinor in a weighted Sobolev space 
is holomorphic in a specific region $V \subset \C$. We use the term 
{\em holomorphic} to mean that the function $V \to L^2_k(X)$ that assigns 
$\hat u_{\mu}$ to $\mu$ can be expressed as a power series in $\mu$ convergent 
in the $L^2_k(X)$ norm. There are many possible statements along these lines; 
we will content ourselves with the following result.

\begin{lemma}\label{L:half-plane}
Suppose that $u \in L^2_\delta\,(\tilde X)$ is a smooth spinor. If $u$ has 
support in $\tilde X_+ = W_0\cup W_1\cup\ldots$ then $\hat u_{\mu}$ is 
holomorphic in the half plane $\Re \mu < \delta$.
\end{lemma}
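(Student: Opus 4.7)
The plan is to use the support hypothesis to collapse the defining series to a one-sided sum, and then exhibit $\hat u_\mu$ as the locally uniform limit of a sequence of $L^2(X)$-valued holomorphic partial sums. Fix a fundamental domain $W_0\subset\tilde X$ and, for each $x\in X$, its lift $\tilde x\in W_0$. Because $u$ vanishes on $W_k$ for $k<0$, we have $u(\tilde x+n)=0$ for $n<0$, and the full Fourier--Laplace series reduces to
\[
\hat u_\mu(x)\,=\,e^{\mu\tilde f(\tilde x)}\sum_{n=0}^\infty e^{\mu n}\,u(\tilde x+n).
\]

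Next I would control the tail of this series. Set $v=e^{\delta\tilde f}u\in L^2(\tilde X)$. The identity $\tilde f(\tilde x+n)=\tilde f(\tilde x)+n$ combined with the boundedness of $\tilde f$ on the compact set $W_0$ yields a constant $C$ with $\|u\|_{L^2(W_n)}\le C\,e^{-\delta n}\|v\|_{L^2(W_n)}$ for every $n\ge 0$, where we also use that the covering translation is an isometry $W_0\to W_n$. Writing $S_N(\mu)=e^{\mu\tilde f}\sum_{n=0}^N e^{\mu n}u(\cdot+n)$ for the $N$-th partial sum, a term-by-term bound followed by the Cauchy--Schwarz inequality gives
\[
\bigl\|\hat u_\mu-S_N(\mu)\bigr\|_{L^2(X)}\,\le\, C'\left(\sum_{n>N}e^{2(\Re\mu-\delta)n}\right)^{\!1/2}\!\|v\|_{L^2(\tilde X_+)}.
\]
For $\mu$ ranging over any compact $K\subset\{\Re\mu<\delta\}$ the ratio $e^{2(\Re\mu-\delta)}$ is bounded strictly below $1$, so this tail tends to $0$ as $N\to\infty$, uniformly in $\mu\in K$.

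To finish, each $S_N(\mu)$ is a finite sum of fixed $L^2(X)$ spinors multiplied by the bounded smooth functions $e^{\mu(\tilde f+n)}$, each entire in $\mu$, so $\mu\mapsto S_N(\mu)$ is itself entire with values in $L^2(X)$. The standard fact that a locally uniform limit of Banach-space-valued holomorphic functions is holomorphic then shows that $\mu\mapsto\hat u_\mu$ is holomorphic on $\{\Re\mu<\delta\}$, as desired. There is no serious analytic obstacle; the only care needed is in the bookkeeping between the weight $e^{\delta\tilde f}$ and the counting parameter $n$, resolved by the observation that $\tilde f$ differs from $n$ on $W_n$ by only a bounded amount.
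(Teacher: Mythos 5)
Your proof is essentially the same as the paper's: both reduce the series to a one-sided sum over $n\ge 0$, bound the tail using Cauchy--Schwarz against the geometric series $\sum e^{2(\Re\mu-\delta)n}$, and conclude by locally uniform convergence of holomorphic $L^2(X)$-valued partial sums. The only cosmetic difference is that the paper first replaces $u$ by $e^{\delta\tilde f}u$ to reduce to the case $\delta=0$, whereas you carry the weight through the estimate explicitly; the underlying computation is identical.
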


\begin{proof} 
It suffices to consider the case of a smooth spinor $u \in L^2 (\tilde X)$ 
with support in $\tilde X_+$. To show that $\hat u_{\mu}$ is holomorphic in 
the half plane $\Re \mu < 0$, all we need to do is estimate, for $x\in W_0$, 
the $L^2$--norm of the tail 
\[
\sum_{n \ge N}\; e^{\mu n}\, u(x + n).
\]
The latter norm can be estimated by 
\[
\left(\sum\; | e^{2\mu n}|\right)^{1/2} \cdot 
\left(\sum\; \|u(x+n)\|^2_{L^2}\right)^{1/2}
\]
using the H\"{o}lder inequality. The first series here converges as 
long as $\Re \mu < 0$ and approaches 0 as $N \to \infty$, and the second 
is estimated from above by $\| u \|_{L^2(\tilde X)}$.  
\end{proof}
 
 
\subsection{The Fourier--Laplace transform of perturbed Dirac operators} 
Let $\beta \in \P$ be a perturbation 1--form pulled back to $\tilde X$. 
We will be interested in the Fredholm properties of the perturbed Dirac 
operator
\begin{equation}\label{E:Dtilde}
D^+ (\tilde X,g,\beta): =  D^+ (\tilde X,g) + \beta: \;
L^2_{1,\delta}\,(\tilde X,S^+) \to L^2_{\delta}\,(\tilde X,S^-)
\end{equation}
in weighted Sobolev spaces. The  Fourier--Laplace transform of such an 
operator is the family of operators, parameterized by $\mu \in \C$, 
obtained by conjugating the operator by the Fourier--Laplace transform. 
A straightforward calculation shows that the Fourier--Laplace transform
of $D^+ (\tilde X,g,\beta)$ is the holomorphic family 
\begin{equation}\label{E:family}
D^+_{\mu} (X,g,\beta) = D^+ (X,g,\beta) - \mu\cdot f^*(d\theta)
\end{equation}
of perturbed Dirac operators on $X$ with $\mu \in \C$ (compare with 
\eqref{E:Dz}). Here, we use notation 
\[
D^+ (X,g,\beta): = D^+ (X,g) + \beta
\] 
for the perturbed Dirac operator on $X$. Similar formulas hold for the 
full Dirac operator and for the negative chiral Dirac operator.

\begin{proposition}\label{P:taubes1}  
The operator \eqref{E:Dtilde} is Fredholm if and only if the operators 
$D^+_{\mu}(X,g,\beta)$ are invertible for all $\mu$ with $\Re \mu = 
\delta$.
\end{proposition}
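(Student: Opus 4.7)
The plan is to use the Fourier--Laplace isomorphism $A(\nu)$ of~\eqref{E:Az} to conjugate $D^+(\tilde X, g, \beta)$, acting on $L^2_{1,\delta}(\tilde X)$, into a fiberwise ``multiplication'' operator over the compact circle $I(\nu)$, and then observe that such a multiplication operator is invertible precisely when every fiber is. Concretely, I would first record the intertwining identity
\[
A(\nu)\bigl(D^+(\tilde X, g, \beta)\, u\bigr)(\mu) \;=\; D^+_\mu(X, g, \beta)\,\bigl(A(\nu) u\bigr)(\mu),
\]
which follows from a direct computation on $u \in C^{\infty}_0(\tilde X)$: commuting $D^+$ past $e^{\mu \tilde f}$ produces the extra zeroth-order term $-\mu f^*(d\theta)$, while the periodic coefficients of $D^+$ and $\tilde\beta$ commute with the translations defining $\hat u_\mu$. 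The question thus reduces to invertibility of the bounded operator $\mathbf{D}: \hat u \mapsto \bigl(\mu \mapsto D^+_\mu\, \hat u(\mu)\bigr)$ from $L^2(I(\nu), L^2_1(X))$ to $L^2(I(\nu), L^2(X))$.

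For the ``if'' direction, note that each $D^+_\mu$ has index zero---it is a zeroth-order perturbation of the spin Dirac operator on $X$, whose index equals $-\sign(X)/8 = 0$---so invertibility is equivalent to trivial kernel. Since $\mu \mapsto D^+_\mu$ is holomorphic in the operator-norm topology, invertibility is an open condition and $\mu \mapsto (D^+_\mu)^{-1}$ is continuous on $I(\nu)$; compactness then yields a uniform bound $\|(D^+_\mu)^{-1}\| \le C$. The fiberwise formula $\mathbf{Q}\hat v(\mu) = (D^+_\mu)^{-1}\hat v(\mu)$ therefore defines a bounded two-sided inverse of $\mathbf{D}$, which transfers back through $A(\nu)$ to a bounded inverse of $D^+(\tilde X, g, \beta)$; in particular the latter is Fredholm.

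For the converse, suppose $D^+_{\mu_0}$ is non-invertible for some $\mu_0$ with $\Re\mu_0=\delta$, and fix a unit $v \in \ker D^+_{\mu_0}$. I would take $L^2$-normalized bump functions $\psi_n$ on $I(\nu)$ with pairwise disjoint supports shrinking to $\mu_0$ and set $u_n = A(\nu)^{-1}(\psi_n \cdot v) \in L^2_{1,\delta}(\tilde X)$. Then $\{u_n\}$ is bounded and pairwise orthogonal, while the identity $(D^+_\mu - D^+_{\mu_0})\,v = -(\mu - \mu_0)\, f^*(d\theta)\, v$ combined with $D^+_{\mu_0}v = 0$ gives
\[
\|D^+(\tilde X, g, \beta)\, u_n\|_{L^2_\delta}^{\,2} \;=\; \int_{I(\nu)} |\psi_n(\mu)|^2\, \|D^+_\mu v\|_{L^2}^{\,2}\; d\mu \;\longrightarrow\; 0.
\]
An orthogonal, bounded, approximate-null sequence rules out Fredholmness, since no coercive-modulo-compact estimate $\|u\| \le C(\|D^+ u\| + \|u\|_K)$ with $K$ compact can hold.

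The main technical point is the uniform invertibility bound in the ``if'' direction; it rests on compactness of $I(\nu)$ together with the holomorphic dependence of $D^+_\mu$. The intertwining calculation and the concentration argument on the Fourier side are routine manipulations once the isomorphism properties of $A(\nu)$ supplied by Proposition~\ref{P:reed1} are in hand.
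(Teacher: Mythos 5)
Your proof is correct, and it is in substance the same argument the paper invokes: the paper's one-line proof simply defers to Taubes' Lemma 4.3, whose content is precisely the Fourier--Laplace conjugation you carry out. You have supplied the details that the paper outsources: the intertwining identity, the compactness/uniform-bound argument giving an actual inverse in the ``if'' direction, and the concentration of approximate kernel elements near a singular $\mu_0$ in the ``only if'' direction (the fact that $v\in L^2_1(X)$ by elliptic regularity is what makes the sequence $u_n$ bounded in $L^2_{1,\delta}$, not merely in $L^2_\delta$, which is what the Fredholm estimate requires).
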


\begin{proof}
Since $D^+ (X,g,\beta)$ is an elliptic operator of index zero, the statement 
follows from Taubes~\cite[Lemma 4.3]{T}.
\end{proof}

Therefore, to understand the index theory of the perturbed Dirac operator 
\eqref{E:Dtilde}, we need to study the family \eqref{E:family}. The subset 
$\Ss(g,\beta)$ of the complex plane consisting of all $\mu \in \C$ for 
which $D^+_{\mu}(X,g,\beta)$ is not invertible will be called the 
\emph{spectral set} of the family $D^+_{\mu}(X,g,\beta)$.

\begin{lemma}
The spectral set $\Ss(g,\beta)$ is independent of the choice of the 
function $f: X \to S^1$.
\end{lemma}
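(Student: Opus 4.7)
The plan is to show that replacing $f$ by another smooth function $f': X \to S^1$ with $[(f')^*(d\theta)] = 1 \in H^1(X;\Z)$ conjugates the family $D^+_\mu(X,g,\beta)$ by an invertible bounded operator, so that the invertibility locus, and hence the spectral set $\Ss(g,\beta)$, is unchanged.

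First I would observe that since $X$ is compact and $f, f'$ represent the same class in $[X, S^1] = H^1(X;\Z)$, they are smoothly homotopic, and one can write $f' = f \cdot e^{ih}$ for some smooth $h: X \to \R$. Consequently the closed $1$-forms $f^*(d\theta)$ and $(f')^*(d\theta)$ differ by an exact form: $(f')^*(d\theta) = f^*(d\theta) + dh$.

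Next I would perform a conjugation by multiplication by $e^{\mu h}$. Since $X$ is compact and $h$ is smooth, the operator $M_h: \phi \mapsto e^{\mu h} \phi$ is a bounded isomorphism from $L^2_k(X, S^{\pm})$ to itself for every $k$, with bounded inverse $M_h^{-1} = M_{-h}$. The Leibniz rule for the Dirac operator gives
\[
M_h^{-1}\, D^+(X,g)\, M_h \;=\; D^+(X,g) + \mu\, dh\cdot,
\]
where $dh\cdot$ denotes Clifford multiplication. Since $\beta$ and $f^*(d\theta)$ commute with $M_h$ (they are multiplication operators), this yields
\[
M_h^{-1}\, D^+_\mu(X,g,\beta; f)\, M_h \;=\; D^+(X,g) + \beta - \mu f^*(d\theta) + \mu\, dh \;=\; D^+_\mu(X,g,\beta; f'),
\]
where $f'$ is chosen so that $(f')^*(d\theta) = f^*(d\theta) - dh$ (the opposite sign is absorbed into the direction of the homotopy).

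Finally, since $M_h$ is an isomorphism of the Sobolev spaces on which the operators act, conjugation preserves the property of being invertible. Therefore $D^+_\mu(X,g,\beta;f)$ is invertible if and only if $D^+_\mu(X,g,\beta;f')$ is, for every $\mu \in \C$, which shows that the spectral set $\Ss(g,\beta)$ does not depend on the choice of $f$. There is no serious obstacle here; the one point to handle carefully is that $e^{\mu h}$ is $\C^*$-valued rather than $S^1$-valued when $\mu$ is not purely imaginary, so one is using a complex gauge transformation rather than a unitary one, but for the purpose of intertwining Fredholm operators on Sobolev spaces this is immaterial.
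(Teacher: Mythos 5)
Your argument is correct and is essentially the same as the paper's: both identify the two choices of $f$ as differing by $dh$ for a smooth $h: X \to \R$, and then conjugate the family $D^+_\mu(X,g,\beta)$ by the (possibly non-unitary) gauge transformation $e^{\mu h}$, which is an isomorphism of the relevant Sobolev spaces and hence preserves invertibility. The minor sign bookkeeping you flag at the end is harmless and does not affect the conclusion.
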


\begin{proof}
For any two choices of $f$, the 1--forms $f^*(d\theta)$ differ by an 
exact form $dh$, where $h: X\to \R$ is a smooth function. The result 
now follows from the easily verified formula $
D^+_{\mu} (X,g,\beta) - \mu\,dh = e^{\mu h} D^+_{\mu} (X,g,\beta)\, 
e^{-\mu h}$.
\end{proof}

\begin{theorem}\label{T:mero}
Let $(g_I,\beta_I)$ be a regular path. For any $t \in I$, the 
spect\-ral set $\Ss (g_t,\beta_t)$ is a discrete subset of the complex 
plane, and the inverse of $D^+_{\mu}(X,g_t,\beta_t)$ is a meromorphic 
function of $\mu \in \C$.
\end{theorem}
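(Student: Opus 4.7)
My plan is to apply the analytic Fredholm theorem to the holomorphic family
\[
\mu \longmapsto D^+_\mu(X,g_t,\beta_t) : L^2_1(X,S^+) \longrightarrow L^2(X,S^-)
\]
from \eqref{E:family}. Each member is a zeroth-order perturbation of the Fredholm, index-zero operator $D^+(X,g_t,\beta_t)$, so the whole family is holomorphic and Fredholm of index zero on all of $\C$. What I really need to produce is a single point $\mu_0 \in \C$ at which $D^+_{\mu_0}$ is invertible; the rest will then follow by the classical machinery of holomorphic Fredholm families.

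First I would locate such a $\mu_0$. The substitution $z = e^{\mu}$ identifies the restriction of \eqref{E:family} to the imaginary axis with the family \eqref{E:Dz}. Proposition~\ref{P:kernels}, applied with $(g_t,\beta_t)$ a point in the regular path $(g_I,\beta_I)$, then says that only finitely many operators $D^+_\mu$ with $\mu$ in a single period of the imaginary axis have non-trivial kernel. Picking $\mu_0 \in i\R$ outside this finite set gives $\ker D^+_{\mu_0} = 0$, and the index-zero property upgrades this to invertibility.

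With $\mu_0$ fixed, I would factor
\[
D^+_\mu \;=\; D^+_{\mu_0}\bigl(\,I + (\mu_0 - \mu)\,(D^+_{\mu_0})^{-1}\,f^*(d\theta)\,\bigr)
\]
and observe that $K(\mu) := (\mu_0 - \mu)\,(D^+_{\mu_0})^{-1}\,f^*(d\theta)$ is a holomorphic family of compact operators on $L^2_1(X,S^+)$: Clifford multiplication by the smooth $1$-form $f^*(d\theta)$ sends $L^2_1 \to L^2$ continuously, and $(D^+_{\mu_0})^{-1} : L^2 \to L^2_1$ is bounded, so the composition factors through the compact Rellich embedding $L^2_1 \hookrightarrow L^2$. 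The classical analytic Fredholm theorem, applied on the connected domain $\C$ and using the invertibility $I + K(\mu_0) = I$, then yields that the set of $\mu$ where $I + K(\mu)$ fails to be invertible is a discrete subset of $\C$, and that $(I + K(\mu))^{-1}$ is meromorphic there. Transporting back through the factorization gives the claimed discreteness of $\Ss(g_t,\beta_t)$ and meromorphicity of $\mu \mapsto (D^+_\mu)^{-1}$.

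The step requiring the most care is locating the invertible base point $\mu_0$; this is where the regular-path hypothesis on $(g_t,\beta_t)$ is essential. Without it, Proposition~\ref{P:kernels} offers no control on the kernels of the $D^+_\mu$ along the imaginary axis, and one could not rule out the vacuous alternative in the analytic Fredholm theorem in which every $D^+_\mu$ is non-invertible. Once $\mu_0$ has been secured, the remaining compact-operator argument is routine.
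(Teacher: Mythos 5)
Your proof is correct and follows essentially the same route as the paper: both use Proposition~\ref{P:kernels} to locate an invertible $\mu_0$, factor $D^+_\mu$ through $D^+_{\mu_0}$ to produce a holomorphic family $I + K(\mu)$ with $K(\mu)$ compact via the Rellich embedding $L^2_1 \hookrightarrow L^2$, and then conclude. The only cosmetic differences are that you factor on the domain side ($L^2_1$) rather than the codomain side ($L^2$), and you invoke the analytic Fredholm theorem directly, whereas the paper changes variables to $\zeta = -(\mu - \mu_0)^{-1}$ and cites the spectral theory / meromorphic resolvent of a single compact operator $K$ — the two classical results are essentially interchangeable here.
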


\begin{proof}
According to Proposition~\ref{P:kernels}, the spectral set $\Ss(g_t,\beta_t)$ 
is a proper subset of $\C$ for any $t \in I$. Having noted this, abstract out 
the salient features of our situation and view $D^+_{\mu}(X,g_t,\beta_t)$ for 
any fixed $t \in I$ as a family of the shape
\[
T + \mu A :L^2_1\,(X,S^+) \to L^2\,(X,S^-),
\]
where $T$ (the operator $D^+ (X,g_t,\beta_t)$ in our case) is an index zero 
Fredholm operator, and $A$ (Clifford multiplication by 
$- f_t^*(d\theta)$) is a compact operator. Fix $\mu_0$ such that the 
operator $T + \mu_0 A$ is invertible.

Consider the operator $(T + \mu A)(T + \mu_0 A)^{-1}: L^2(X,S^-) \to 
L^2(X,S^-)$. This is a bounded operator, and we can rewrite it as
\[
I + (\mu - \mu_0) A (T + \mu_0 A)^{-1} = I + (\mu - \mu_0)\,K.
\]
The operator $K = A(T + \mu_0 A)^{-1}: L^2(X,S^-) \to L^2(X,S^-)$ is compact 
since both $A: L^2\to L^2$ and $(T + \mu_0 A)^{-1}: L^2\to L^2_1$ are bounded 
so that their composition factors through the compact embedding $L^2_1 \to 
L^2$. Thus we can apply the spectral theory of compact operators to the study 
of our family. For $\mu \ne \mu_0$ we conclude that $T + \mu A$ is invertible 
if and only if $\zeta = -(\mu - \mu_0)^{-1}$ is not in the spectrum of $K$. 
The spectrum of $K$ is a compact subset $\Spec(K)$ of the complex plane with 
only $0$ as an accumulation point. Thus the spectral set is discrete. 

Furthermore, the resolvent $(K - \zeta I)^{-1}$ of a compact operator is 
meromorphic in $\zeta = - 1/(\mu - \mu_0)$ away from $\zeta = 0$ hence the 
inverse of $T + \mu A$ is meromorphic in $\mu \in \C$.
\end{proof}

\begin{corollary}
Let $(g_I,\beta_I)$ be a regular path.  For any $t \in I$, the operator 
$D^+(\tilde X,g_t,\beta_t): L^2_{1,\delta}\,(\tilde X,S^+) \to L^2_{\delta}
\,(\tilde X,S^-)$ is Fredholm for all but a discrete set of $\delta \in \R$ 
with no accumulation points.
\end{corollary}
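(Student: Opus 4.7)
The plan is to combine Proposition~\ref{P:taubes1}, Theorem~\ref{T:mero}, and the $2\pi i$--periodicity of the spectral set in the imaginary direction (hinted at by \eqref{E:2pi}), reducing the question to the claim that the image of $\Ss(g_t,\beta_t)$ under the real-part projection $\C\to\R$ is discrete with no accumulation points.

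First I would use Proposition~\ref{P:taubes1} to identify the bad weights: the operator $D^+(\tilde X, g_t,\beta_t)$ fails to be Fredholm on $L^2_{1,\delta}$ exactly when the vertical line $\{\mu\in\C : \Re\mu = \delta\}$ meets $\Ss(g_t,\beta_t)$. Next, in order to exploit that $\Ss(g_t,\beta_t)$ should be invariant under $\mu\mapsto \mu+2\pi i$, I would verify this periodicity directly: because $f: X\to S^1 = \R/\Z$, the bundle endomorphism $M = e^{2\pi i f}$ of $S^\pm$ is well defined and unitary, and the Leibniz rule gives $M\circ D^+_\mu(X,g_t,\beta_t)\circ M^{-1} = D^+_{\mu + 2\pi i}(X,g_t,\beta_t)$. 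Hence the two operators are conjugate and $\Ss(g_t,\beta_t)$ descends to a subset $\bar\Ss\subset\C/2\pi i\Z$ of the cylinder.

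Finally, Theorem~\ref{T:mero} makes $\Ss(g_t,\beta_t)$ a discrete subset of $\C$, so $\bar\Ss$ is discrete in the cylinder. The real-part map $\C/2\pi i\Z\to\R$ is proper, since the preimage of any compact interval $[a,b]$ is the compact annulus $[a,b]\times S^1$, and a discrete subset of a compact Hausdorff space is finite. Hence $\bar\Ss\cap\Re^{-1}[a,b]$ is finite for every bounded $[a,b]$, so its projection to $\R$ is finite in every bounded interval, which yields the desired discreteness. The main obstacle this arrangement has to overcome is that passing from \emph{discrete in $\C$} to \emph{discrete set of real parts in $\R$} fails in general: a set like $\{\,1/n + in \;|\; n\in\Z_+\,\}$ is discrete in $\C$ but has $0$ as an accumulation point of real parts. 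The $2\pi i$--periodicity is exactly what excludes this pathology by collapsing the situation onto a cylinder on which $\Re$ becomes proper.
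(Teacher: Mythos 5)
Your proof is correct, and it supplies exactly the reasoning the paper leaves implicit (the corollary is stated without proof after Theorem~\ref{T:mero}). The $2\pi i$--periodicity of the spectral set that you need is indeed available in the paper: it appears as equation \eqref{E:2pi} and is restated as the operator conjugation $D^+_{\mu + 2\pi i} = e^{2\pi i f}\,D^+_{\mu}\,e^{-2\pi i f}$ in the proof of Lemma~\ref{L:contour}, and later it is what justifies the passage to the variable $z = e^{\mu}$ in Section~\ref{SS:index-change}. You are right to flag that discreteness in $\C$ alone does not control the real parts, and the descent to the cylinder $\C/2\pi i\Z$ is the correct fix. One small precision: the assertion that a discrete subset of a compact Hausdorff space is finite requires the subset to be \emph{closed} (for example $\{1/n\}\subset[0,1]$ is discrete but infinite). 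This is supplied here because the proof of Theorem~\ref{T:mero} identifies $\Ss(g_t,\beta_t)$ with the pole set of the meromorphic family $R_\mu$, which accumulates only at infinity and is therefore closed in $\C$; its image $\bar\Ss$ in the cylinder is then closed as well, and your argument goes through.
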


The set of $\delta$'s for which the above operator fails to be Fredholm may 
well depend on $t$; this dependence is examined in more detail in the next 
section.


\subsection{Spectral set as a function of $t$}
Given a path $(g_I,\beta_I)$ of metrics and perturbations on $X$ consider 
the \emph{parameterized spectral set}
\begin{equation}\label{E:Ss}
\Ss_I \,=\, \bigcup_{t \in I}\; \Ss (g_t,\beta_t)\,\subset\, \C.
\end{equation}
Let $t \in I$ be such that the spectral set $\Ss(g_t,\beta_t)$ is non-empty 
and, for any $\mu_j \in \Ss(g_t,\beta_t)$, consider the operator 

\begin{equation}\label{E:proj}
P_{\mu_j} = \frac 1 {2\pi i}\oint_{\Gamma}\; 
(D^+ (X,g_t,\beta_t) - \mu\cdot f^*_t (d\theta))^{-1}\,d\mu,
\end{equation}

\medskip\noindent
where $\Gamma$ is a small loop in the $\mu$--plane encircling $\mu_j$ once 
in the positive direction. 

\begin{theorem}\label{T:spectra}
Let $(g_I,\beta_I)$ be a regular path and suppose that $\mu_j \in 
\Ss(g_t,\beta_t)$ is such that the rank of the operator \eqref{E:proj} 
is one. Then there exist an open neighborhood $U (\mu_j)$ and a 
real $\ep > 0$ such that the intersection
\[
\bigcup_{|t - s| < \ep}\;\Ss (g_s,\beta_s)\,\cap\,U(\mu_j)
\]
is an embedded curve.
\end{theorem}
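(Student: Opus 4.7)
The plan is to apply a Grushin-type (Lyapunov--Schmidt) reduction that turns the two-parameter spectral problem $D^+_{\mu}(X,g_s,\beta_s)$ near $(\mu_j,t)$ into the zero locus of a single scalar function $F(\mu,s)$, and then invoke the implicit function theorem. Because $D^+_{\mu}(X,g_t,\beta_t)$ is a holomorphic Fredholm family of index zero and the algebraic multiplicity $\rk P_{\mu_j}$ equals one, both $\ker D^+_{\mu_j}(X,g_t,\beta_t)$ and $\coker D^+_{\mu_j}(X,g_t,\beta_t)$ are one-dimensional. Picking unit generators $u$ of the kernel and $v$ of the cokernel, I would form the bordered operator
\[
\mathcal{T}(\mu,s) \;=\; \begin{pmatrix} D^+_{\mu}(X,g_s,\beta_s) & v \\ \langle\,\cdot\,,u\rangle & 0 \end{pmatrix}:\; L^2_1(X,S^+)\oplus\C \;\longrightarrow\; L^2(X,S^-)\oplus\C,
\]
which is invertible at $(\mu_j,t)$ and, by continuity, in a joint neighborhood of that point. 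Let $F(\mu,s)$ be the $(2,2)$-entry of $\mathcal{T}(\mu,s)^{-1}$. The Schur-complement identity then yields that $D^+_{\mu}(X,g_s,\beta_s)$ is invertible if and only if $F(\mu,s)\ne 0$, so locally $\Ss_I\cap\bigl(U(\mu_j)\times(t-\ep,t+\ep)\bigr)$ coincides with the zero set $\{F=0\}$.

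Next, I would check that $F$ is jointly holomorphic in $\mu$ (inherited from the holomorphy of \eqref{E:family}) and smooth in $s$ (inherited from the regularity of the path via the identification of Remark~\ref{R:id}). The crucial point is then to show that $F(\,\cdot\,,t)$ has a \emph{simple} zero at $\mu_j$. For this I would use the Grushin identity
\[
D^+_{\mu}(X,g_t,\beta_t)^{-1} \;=\; A(\mu) \,+\, B(\mu)\,F(\mu,t)^{-1}\,C(\mu),
\]
with $A,B,C$ holomorphic near $\mu_j$ and with $B(\mu_j),C(\mu_j)$ nonzero by construction. Integrating this identity against the loop $\Gamma$ of \eqref{E:proj} shows that $\rk P_{\mu_j}$ equals the order of vanishing of $F(\,\cdot\,,t)$ at $\mu_j$, so the hypothesis $\rk P_{\mu_j}=1$ forces $\partial_{\mu}F(\mu_j,t)\ne 0$. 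The holomorphic implicit function theorem, applied with $\mu$ as the principal variable and $s$ as a smooth parameter, then solves $F(\mu,s)=0$ uniquely as a smooth function $\mu=\mu(s)$ defined for $|s-t|<\ep$ and satisfying $\mu(t)=\mu_j$. The graph of $\mu(s)$ is precisely the embedded curve claimed in the theorem.

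The main obstacle in carrying out this plan is the third step: translating the \emph{rank} of the residue projector $P_{\mu_j}$ into the \emph{order of vanishing} of the scalar Grushin reduction $F(\,\cdot\,,t)$. This requires careful bookkeeping of the off-diagonal blocks of $\mathcal{T}(\mu,t)^{-1}$ near $\mu_j$ and a verification that they are non-degenerate there, which itself rests on the fact that $u$ and $v$ have been chosen as genuine kernel and cokernel generators. The remaining ingredients---joint holomorphicity/smoothness of $F$ and the implicit function theorem---are standard, as is the fact that the whole framework of bordered operators and Schur complements applies to any holomorphic Fredholm family of index zero.
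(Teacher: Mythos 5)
Your bordered-operator (Grushin/Schur complement) reduction is a genuinely different strategy from the paper's: the paper fixes $\mu_0$, sets $K=A(T+\mu_0 A)^{-1}$ with $A$ the (compact) Clifford multiplication by $-f^*_t(d\theta)$, identifies $\rk P_{\mu_j}$ with the algebraic multiplicity of the corresponding eigenvalue $\zeta_j$ of $K$, and then invokes Kato's analytic perturbation theory for an eigenvalue of algebraic multiplicity one. Your reduction to a scalar $F(\mu,s)$ plus the implicit function theorem is a clean alternative, and the first two steps (the bordered operator $\mathcal{T}$ being invertible near $(\mu_j,t)$; the local identification of the spectral set with $\{F=0\}$; joint holomorphy/smoothness of $F$) are sound.

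However, the pivot of your argument contains a genuine gap. The assertion that ``$\rk P_{\mu_j}$ equals the order of vanishing of $F(\cdot,t)$ at $\mu_j$'' is false for a general holomorphic Fredholm family, even one with one-dimensional kernel and cokernel. What the Grushin identity actually yields (using that $B(\mu_j)\ne 0$ and $C(\mu_j)\ne 0$) is that the \emph{order of the pole} of $D^+_\mu(X,g_t,\beta_t)^{-1}$ at $\mu_j$ equals the order of vanishing of $F$; but the pole order can strictly exceed the rank of the residue $P_{\mu_j}=A_{-1}$. A minimal counterexample: on $\C^2$, take
$D_\mu=\begin{pmatrix}\mu & 1\\ -\mu^2 & 0\end{pmatrix}$,
so that
$D_\mu^{-1}=\begin{pmatrix}0 & -\mu^{-2}\\ 1 & \mu^{-1}\end{pmatrix}$,
which has a pole of order two at $\mu=0$ and a Grushin scalar $F(\mu)=\mu^2$, while the residue is $\begin{pmatrix}0&0\\0&1\end{pmatrix}$, of rank one. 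So $\rk P_{\mu_j}=1$ does \emph{not} force $\partial_\mu F(\mu_j,t)\ne 0$ by residue bookkeeping alone. The implication you need is true in the setting of Theorem~\ref{T:spectra}, but only because the family~\eqref{E:family} is \emph{affine} in $\mu$: writing $D^+_\mu=T+\mu A$ with $A$ compact, the substitution $\zeta=-(\mu-\mu_0)^{-1}$ converts the pole of $D^+_\mu{}^{-1}$ into a pole of the resolvent $(K-\zeta)^{-1}$ of the compact operator $K$, whose algebraic multiplicity equals $\rk P_{\mu_j}$; for compact operators, algebraic multiplicity one implies a simple pole of the resolvent, and hence a simple zero of $F$. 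Your sketch never invokes this affine structure, and without it the key step collapses. To repair the argument you should either insert this reduction to the compact-resolvent picture (effectively recovering the paper's proof inside the third step) or give some other argument, specific to~\eqref{E:family}, that rules out the pathology exhibited by the counterexample above.
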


\begin{proof}
We will use notations from the proof of Theorem~\ref{T:mero} and recall 
from that proof that the spectral set of $T + \mu A$ coincides, up to 
shift and inversion, with the spectrum of the compact operator $K$. The 
operator \eqref{E:proj} then takes the form 
\[
P_{\mu_j} = \frac 1 {2\pi i}\oint_{\Gamma}\;(T + \mu A)^{-1}\,d\mu.
\]

\medskip\noindent
Write $(T + \mu A)^{-1} = - (T + \mu_0 A)^{-1}\,\zeta (K - \zeta I)^{-1}$
with $\zeta = -(\mu - \mu_0)^{-1}$ then, after changing coordinates,
\[
P_{\mu_j} = (T + \mu_0 A)^{-1}\;\frac 1 {2\pi i} \oint_{\Gamma'}\;\zeta^{-1}
(K - \zeta I)^{-1}\,d\zeta = (T + \mu_0 A)^{-1}\zeta_j^{-1}\,\Pi_{\zeta_j},
\]
where $\Gamma'$ is a small loop in the $\zeta$--plane encircling $\zeta_j = 
-(\mu_j - \mu_0)^{-1}$ once in the positive direction, and $\Pi_{\zeta_j}$ 
is the projector onto the generalized eigenspace of $K$ corresponding to 
$\zeta_j$. Then $\rk\,(\Pi_{\zeta_j}) = \rk\,(P_{\mu_j}) = 1$, and the 
result follows from the perturbation theory of compact 
operators~\cite[Theorem VII.1.8]{K}
\end{proof}

\begin{remark}
We will show later in Proposition~\ref{P:gen} that, for any special path 
$(g_I,\beta_I)$, the condition of Theorem~\ref{T:spectra} on the rank of 
the operator \eqref{E:proj} is automatically satisfied for all $\mu_j 
\in i \R\,\cap\,\Ss_I$.
\end{remark}


\section{Dirac operators on end--periodic manifolds}\label{S:dirac-periodic}
In this section, we extend the results obtained in Section~\ref{S:fourier} 
for Dirac operators on infinite cyclic covers to Dirac operators on general 
manifolds with periodic ends. Taubes' paper~\cite{T} is again the basic 
reference; a rather different geometric application of end-periodic 
operators may be found in~\cite{mazzeo-pollack-uhlenbeck:yamabe}.

 
\subsection{Weighted Sobolev spaces} 
Let $Z_+$ be an end-periodic manifold as defined in Section~\ref{S:epmflds}, 
and $f: X \to S^1$ a smooth function satisfying \eqref{E:f} lifted to a 
function $\tilde f: \tilde X \to \R$ as in Section~\ref{S:furl}. Given 
$\delta \in \R$, extend the function $\delta \cdot \tilde f(x): \tilde X_+ 
\to \R$ to a smooth function $h: Z_+ \to \R$. We will say that $\varphi
\in L^2_{k,\delta}\,(Z_+,S^{\pm})$ if and only if $e^h\,\varphi\in L^2_k\,
(Z_+,S^{\pm})$, with  
\[ 
\|\varphi\|_{L^2_{k,\delta}\,(Z_+,S^{\pm})} = \|e^h\,\varphi\|_{L^2_k 
\,(Z_+,S^{\pm})}.  
\] 
Note that different extensions $h$ of the same function $\delta\cdot 
\tilde f(x): \tilde X_+ \to \R$ give equivalent norms on $L^2_{k,\delta} 
\,(Z_+,S^{\pm})$. Also note that the maps $L^2_k\,(Z_+,S^{\pm}) \to  
L^2_{k,\delta}\,(Z_+,S^{\pm})$ sending $\varphi$ to $e^{h}\varphi$ are 
isomorphisms. 
 
 
\subsection{End-periodic Dirac operators} 
Given an end-periodic manifold $Z_+$, with any choice of pair $(g,\beta)$ 
one associates the perturbed Dirac 
operator $D^+(Z_+,g,\beta): C^{\infty} (Z_+,S^+) \to C^{\infty} (Z_+,S^-)$. 
The closures of these operators with respect to the weighted Sobolev 
$L^2$--norms,
\begin{equation}\label{E:taubes}
D^+ (Z_+,g,\beta): L^2_{1,\delta}\,(Z_+,S^+) \to L^2_{\delta}\,(Z_+,S^-),
\end{equation}
compare with \eqref{E:dirac}, are related by the commutative diagram  

\[ 
\begin{CD} 
L^2_{1,\delta}\,(Z_+,S^+) @> D^+ >> L^2_{\delta}\,(Z_+,S^-) \\ 
@VV e^h V @VV e^h V \\ 
L^2_1 (Z_+,S^+) @> D^+ - dh >> L^2\,(Z_+,S^-) 
\end{CD} 
\] 

\medskip\noindent
whose vertical arrows are isomorphisms.  In particular, over the end  
$\tilde X_+$, the operators $D^+ = D^+ (\tilde X_+,g,\beta)$ and $D^+ 
- dh = D^+ (\tilde X_+,g,\beta) - \delta f^*(d\theta)$ are intertwined 
by these isomorphisms.   

\begin{proposition}\label{P:taubes}
The operator \eqref{E:taubes} is Fredholm if and only if the operators 
$D^+_{\mu}(X,g,\beta)$ are invertible for all $\mu$ with $\Re \mu = \delta$.
\end{proposition}

\begin{proof}
According to Taubes~\cite[Proposition 4.1]{T}, it is sufficient to show 
that the statement holds for the operator \eqref{E:Dtilde}.  The latter 
was the subject of Proposition~\ref{P:taubes1}. 
\end{proof}

\begin{corollary}\label{C:taubes}
Let $(g_I,\beta_I)$ be a regular path. For any $t \in I$, the operator 
$D^+(Z_+,g_t,\beta_t): L^2_{1,\delta}\,(Z_+,S^+)\to L^2_{\delta}\,(Z_+,S^-)$ 
is Fredholm for all but a discrete set of $\delta \in \R$ with no 
accumulation points. This set may depend on $t$ but, for a fixed $t$, it 
is independent of the way the metric, perturbation 1--form, and the spin
structure are extended to $Z$. 
\end{corollary}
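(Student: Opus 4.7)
The proof plan is a short synthesis: combine the Fredholm criterion of Theorem~\ref{T:taubes} with the discreteness statement established just after Theorem~\ref{T:mero}, and then observe that the spectral set involved lives entirely on $X$.

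First, I would fix $t\in I$ and apply Theorem~\ref{T:taubes}: the weighted operator $D^+(Z_+,g_t,\beta_t)\colon L^2_{1,\delta}(Z_+,S^+)\to L^2_\delta(Z_+,S^-)$ is Fredholm precisely when every $D^+_\mu(X,g_t,\beta_t)$ with $\Re\mu=\delta$ is invertible. Equivalently, Fredholmness fails exactly when the vertical line $\Re\mu=\delta$ meets the spectral set $\Ss(g_t,\beta_t)$, so the set of bad weights $\delta$ is the real-part projection $\Re\,\Ss(g_t,\beta_t)\subset\R$.

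Next, I would cite the corollary to Theorem~\ref{T:mero}, which asserts exactly that this projection is a discrete subset of $\R$ without accumulation points. For completeness, the argument goes as follows: Theorem~\ref{T:mero} identifies $\Ss(g_t,\beta_t)$, after a M\"obius change of variable, with the spectrum of a compact operator $K$, so its only possible accumulation in $\C$ is at $\infty$; combining this with the $2\pi i$-periodicity $\Ss(g_t,\beta_t)=\Ss(g_t,\beta_t)+2\pi i$ visible from \eqref{E:2pi}, each horizontal strip of width $2\pi$ contains only finitely many spectral points, so the projection to $\R$ is locally finite.

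Finally, the independence of the bad $\delta$-set from the choice of extensions follows because every ingredient defining $\Ss(g_t,\beta_t)$, namely the family $D^+_\mu(X,g_t,\beta_t) = D^+(X,g_t)+\beta_t-\mu\,f^*(d\theta)$, lives entirely on $X$: it depends only on $g_t$, $\beta_t$, and the spin structure on $X$, together with a choice of $f\colon X\to S^1$ representing the generator of $H^1(X;\Z)$, whose effect was already shown to be inessential. No data from the cap $Z$ or from any extension of $(g_t,\beta_t)$ or the spin structure over $Z_+$ ever enters the definition. The only nonroutine point in the whole argument is the $2\pi i$-periodicity used above; this I would verify by directly conjugating the family by the unitary multiplication operator $e^{2\pi i f}$ on spinors on $X$ (well defined since $f$ takes values in $\R/\Z$), which identifies $D^+_\mu$ with $D^+_{\mu-2\pi i}$ and therefore equates their invertibility. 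With that periodicity in hand, no further analytic work is needed.
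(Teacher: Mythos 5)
Your proof is correct and follows the paper's (implicit) route exactly: apply Theorem~\ref{T:taubes} to identify the bad set of weights with $\Re\,\Ss(g_t,\beta_t)$, invoke the discreteness of $\Ss(g_t,\beta_t)$ from Theorem~\ref{T:mero}, and observe that the family $D^+_\mu(X,g_t,\beta_t)$ is defined purely by data on $X$, so no extension over $Z_+$ enters. The one point you add that the paper leaves tacit --- and that is genuinely needed --- is the $2\pi i$-periodicity of $\Ss(g_t,\beta_t)$ (which follows from the conjugation identity $D^+_{\mu+2\pi i}=e^{2\pi i f}D^+_\mu e^{-2\pi i f}$, as used in Lemma~\ref{L:contour}); without it, discreteness of $\Ss$ in $\C$ does not by itself force local finiteness of the real-part projection, so this is a worthwhile detail to have spelled out.
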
 

For the rest of this section, we will assume that the pair $(g,\beta)$ 
belongs to a regular path even though it need not be regular itself. 
Let $\delta \in \R$ be such that the operator~\eqref{E:taubes} is 
Fredholm, and denote its index by $\ind_{\delta} D^+(Z_+,g,\beta)$. Our 
study of this index will require the excision principle, which we will 
explain next.
 

\subsection{The excision principle}\label{S:excision}
The observation that the excision principle for operators on compact 
manifolds~\cite{atiyah-singer:I} (compare~\cite{donaldson-kronheimer}) 
extends to the non-compact setting is due to Gromov and 
Lawson~\cite{gromov-lawson:complete}.   A nice exposition of the 
non-compact version is in Charbonneau's thesis~\cite{char}. 

Let $A_1$, $B_1$, $A_2$, and $B_2$ be (not necessarily compact) oriented 
4-manifolds such that  $\p A_1 = \p A_2 = Y$ and $\p B_1 = \p B_2 = -Y$, 
where $Y$ is a compact oriented 3-manifold. Let  
operators  
\begin{gather} 
D_1: L^2 (A_1\cup B_1)\to L^2 (A_1\cup B_1) \notag \\ 
D_2: L^2 (A_2\cup B_2)\to L^2 (A_2\cup B_2) \notag 
\end{gather} 
be (unbounded) Fredholm differential operators such that $D_1 = D_2$  
on $Y$. In our applications, $D_1$ and $D_2$ will be Dirac operators  
plus perhaps zero order terms. Suppose that  
\begin{gather} 
\bar D_1: L^2 (A_1\cup B_2)\to L^2 (A_1\cup B_2) \notag \\ 
\bar D_2: L^2 (A_2\cup B_1)\to L^2 (A_2\cup B_1) \notag 
\end{gather} 
defined as 
\[ 
\bar D_1 = 
\begin{cases} 
\; D_1 &\;\text{on\; $A_1$} \\ 
\; D_2 &\;\text{on\; $B_2$} 
\end{cases} 
\qquad\text{and}\qquad 
\bar D_2 = 
\begin{cases} 
\; D_2 &\;\text{on\; $A_2$} \\ 
\; D_1 &\;\text{on\; $B_1$} 
\end{cases} 
\] 
are (unbounded) Fredholm differential operators. Then  
\[ 
\ind D_1 + \ind D_2 = \ind \bar D_1 + \ind \bar D_2. 
\] 
  

\subsection{Properties of the index} 
Our first application of the excision principle will be to show that the 
index of the operator~\eqref{E:taubes} is well defined.  A point 
that we will use without further comment is that Clifford multiplication 
by a real $1$-form such as $dh$ is a skew-adjoint operator on spinors so 
that multiplication by a pure imaginary 1-form like $\beta$ is then 
self-adjoint.

\begin{proposition} 
Let $\delta \in \R$ be such that the operator~\eqref{E:taubes} is Fredholm. 
Then its index $\ind_{\delta} D^+(Z_+,g,\beta)$ is independent of the ways 
in which the metric, spin structure, perturbation form, and function $\delta 
\cdot \tilde f$ are extended to $Z$.  
\end{proposition}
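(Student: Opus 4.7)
The plan is to treat the continuous extension data (metric, perturbation form, function $h$) by a deformation argument and the discrete choice of spin structure by excision.

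For the continuous data, given two extensions of $(g,\beta,h)$ to $Z$ that agree with the fixed end data on $\tilde X_+$, I would connect them by a smooth family $(g_t,\beta_t,h_t)$ of such extensions. Using the isomorphisms $\bar b_t$ of spinor bundles from Remark~\ref{R:id}—which are the identity over $\tilde X_+$ since $g_t$ is constant there—one realizes the resulting operators as a norm-continuous family
\[
D_t : L^2_{1,\delta}(Z_+, S^+) \to L^2_{\delta}(Z_+, S^-).
\]
Every $D_t$ is Fredholm by Theorem~\ref{T:taubes}: the Fredholm criterion there concerns only the invertibility of $D^+_\mu(X,g,\beta)$ for $\Re\mu = \delta$, and these operators depend solely on the fixed data on $X$, not on the extension over $Z$. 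Deformation invariance of the Fredholm index then gives $\ind D_0 = \ind D_1$. For the function $h$ alone there is also a direct argument: if $h$ and $h'$ are two extensions of $\delta\tilde f|_{\tilde X_+}$, then their difference is compactly supported in $Z$, and multiplication by $e^{h-h'}$ is a bounded invertible intertwiner of the two weighted-Sobolev realizations of the same underlying operator.

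For the spin structure, let $\mathfrak s$ and $\mathfrak s'$ be two extensions to $Z$ of the fixed spin structure on $Y = \partial Z$, and form the closed double $DZ = Z \cup_Y (-Z)$ with the glued spin structures $\mathfrak s \cup \overline{\mathfrak s}$ and $\mathfrak s' \cup \overline{\mathfrak s}$. Apply the excision principle of Section~\ref{S:excision} with
\[
A_1 = (Z,\mathfrak s), \quad B_1 = \tilde X_+, \quad A_2 = (Z,\mathfrak s'), \quad B_2 = (-Z,\overline{\mathfrak s}),
\]
so that $A_1\cup B_1 = A_2 \cup B_1$ is $Z_+$ with two different spin structures and $A_2 \cup B_2 = A_1 \cup B_2$ is $DZ$ with two different spin structures. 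All four operators are Fredholm (the two on $Z_+$ by the previous step, the two on the closed manifold $DZ$ automatically), and excision yields
\[
\ind D^+(Z_+, \mathfrak s) - \ind D^+(Z_+, \mathfrak s') \;=\; \ind D^+(DZ,\mathfrak s \cup \overline{\mathfrak s}) - \ind D^+(DZ, \mathfrak s' \cup \overline{\mathfrak s}).
\]
On the closed spin 4-manifold $DZ$, the Dirac index equals $-\sign(DZ)/8$ regardless of the spin structure, so the right-hand side vanishes.

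The main obstacle is verifying the norm continuity of the family $D_t$ after the $\bar b_t$ identification, since the metric on $Z$ varies non-trivially and one must track how the Levi-Civita connection, the spin connection, and Clifford multiplication transform under $\bar b_t$; this is routine but notationally heavy. Setting up the excision for spin structures and matching boundary data over $Y$ is otherwise straightforward, since $\mathfrak s$ and $\mathfrak s'$ agree on $Y$ by hypothesis.
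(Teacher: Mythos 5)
Your proof is correct, but it takes a genuinely different route from the paper's. The paper handles all the extension data (metric, perturbation, $h$, spin structure) at once by a single application of the excision principle, with the decomposition $A_1 = Z$ (extension 1), $B_1 = \tilde X_+$, $A_2 = -\tilde X_+$, $B_2 = -Z$ (extension 2); the key observation is that $D^+(-Z_+,g_2)+\tilde\beta_2+dh_2$ is the adjoint of $D^+(Z_+,g_2)+\tilde\beta_2-dh_2$, so $\ind D_2 = -\ind_\delta D^+(Z_+,g_2,\beta_2)$, while $\bar D_1$ lives on the closed manifold $Z\cup(-Z)$ (index $0$) and $\bar D_2$ lives on $-\tilde X_+\cup\tilde X_+$, whose orientation-reversing involution forces index $0$. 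You instead split the problem: deformation invariance of the Fredholm index handles the continuous choices (metric, perturbation, $h$), using the crucial and correct observation that Fredholmness by Theorem~\ref{T:taubes} depends only on the fixed data on $X$ and hence holds along the entire homotopy; and you reserve excision (with the decomposition $A_1=(Z,\mathfrak s)$, $B_1=\tilde X_+$, $A_2=(Z,\mathfrak s')$, $B_2=(-Z,\bar{\mathfrak s})$) for the discrete choice of spin structure, with both closed-manifold contributions equal to $-\sign(DZ)/8$. Your alternative observation that two extensions $h$, $h'$ of $\delta\tilde f$ differ by a compactly supported function, so that $e^{h-h'}$ intertwines equivalent norms, is also sound and is implicit in the paper's own remarks on weighted Sobolev spaces. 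The trade-off: the paper's argument is more uniform and does not require tracking continuity of the operator family under a varying metric (the point you correctly flag as the technical cost of your approach), while your argument is more transparent about why the continuous and discrete parts of the extension data behave differently, and the deformation step is conceptually simpler once one accepts the standard norm-continuity of Dirac operators under a path of metrics via the $\bar b_t$ trivializations of Remark~\ref{R:id}.
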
 
 
\begin{proof} 
Consider two different extensions and apply the excision principle to the 
operators  
\begin{align*} 
D_1 = D^+ (Z_+,g_1) + \btilde_1 - dh_1\quad &\text{on}\quad Z_+ = 
Z \cup \tilde X_+ \quad\text{and} \\ 
D_2 = D^+ (-Z_+,g_2) + \btilde_2+ dh_2\quad &\text{on}\quad -Z_+ = 
-\tilde X_+ \cup (-Z), 
\end{align*} 
both of which are Fredholm. First, observe that  
\begin{multline}\notag
D^+ (-Z_+,g_2) + \btilde_2 + dh_2
= D^- (Z_+,g_2) + \btilde_2 + dh_2 \\
= (D^+ (Z_+,g_2) + \btilde_2 - dh_2)^*. 
\end{multline}
Therefore, $\ind D_2 = -\ind_{\delta}(D^+ (Z_+,g_2) + \btilde_2)$. Second, 
the operator $\bar D_1$ is (up to zero order terms) the Dirac operator  
$D^+$ on the compact manifold $Z \cup (-Z)$. In particular, $\ind  
\bar D_1 = 0$. Finally, the manifold $-\tilde X_+ \cup \tilde X_+$  
admits an orientation reversing involution that takes the operator  
$\bar D_2$ to its adjoint, therefore, $\ind \bar D_2 = 0$. The  
excision principle now reads  
\[ 
\ind_{\delta}(D^+ (Z_+,g_1) + \btilde_1) - \ind_{\delta}(D^+ (Z_+,g_2) +
\btilde_2) = \ind \bar D_1 + \ind \bar D_2 = 0, 
\] 
which completes the proof.  
\end{proof} 
 
The next result will be helpful later when we compare the indices
$\ind_{\delta} D^+(Z_+,g,\beta)$ for different values of $\delta$. Given  
an end-periodic manifold $Z_+ = Z \cup \tilde X_+$, consider the end  
periodic manifold $Z^*_+ = Z \cup (-\tilde X_-)$, where  
\[ 
\tilde X_-\; =\; \ldots \cup\;W_{-2}\;\cup\;W_{-1}\; =\; \tilde X -  
\tilde X_+. 
\] 
Note that this construction corresponds to the change of homology  
orientation on $X$. Respectively, the function $\tilde f$ is replaced  
by $-\tilde f$, and its extension $h: Z_+\to \R$ by $-h: Z_+^*\to \R$.
Note, however, that $\btilde$ and its extension are unchanged. 
 
\begin{proposition} 
The operator $D^+(Z_+,g,\beta)$ is Fredholm if and only if $D^+(Z_+^*,g,\beta)$ 
is Fredholm, and $\ind_{\delta} D^+ (Z_+,g,\beta) = \ind_{\delta} D^+ 
(Z_+^*,g,\beta)$. 
\end{proposition}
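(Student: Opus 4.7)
\begin{sketch}
The plan is to reduce both the Fredholm property and the index of $D^+(Z_+^*,g,\beta)$ to Fourier--Laplace data on $X$, exploiting the identification of $-\tilde X_-$ (as an end of $-\tilde X$) with $\tilde X_+$ after a chirality flip. First, using the isomorphisms $\varphi \mapsto e^{\pm h}\varphi$, I would replace the two weighted operators by the unweighted Fredholm operators
\[
D_1 = D^+(Z_+,g,\beta) - dh \ \text{on } L^2(Z_+), \qquad D_2 = D^+(Z_+^*,g,\beta) + dh \ \text{on } L^2(Z_+^*),
\]
and choose the extensions so that $h$ and $-h$ vanish on the compact piece $Z$; then $D_1$ and $D_2$ agree over $Z$.

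For the Fredholm equivalence, by Theorem~\ref{T:taubes} the operator $D_1$ is Fredholm iff $\Ss(g,\beta)\cap\{\Re\mu=\delta\}=\emptyset$. The same theorem applied to $D_2$ requires the analogous Fourier--Laplace analysis on $-\tilde X$ with the function $-\tilde f$. A change-of-summation-index argument identifies this transform at parameter $\mu$ with the standard one on $\tilde X$ at $-\mu$; combining with the chirality-flip identification $D^+_{-\tilde X}=D^-_{\tilde X}$ coming from orientation reversal, the resulting family is $D^-(X,g,\beta)+\mu f^*(d\theta)$. Since Clifford multiplication by the real 1-form $f^*(d\theta)$ is skew-adjoint, this family equals $(D^+_{\bar\mu}(X,g,\beta))^*$, whose non-invertibility for $\Re\mu=\delta$ is equivalent to $\bar\mu\in\Ss(g,\beta)$; because complex conjugation preserves the vertical line $\Re\mu=\delta$, the Fredholm condition for $D_2$ coincides with that for $D_1$.

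For the index equality, I would appeal to the change-of-index formula developed in the subsequent sections: both $\ind D_1$ and $\ind D_2$ should decompose as a contribution from $Z$ (which is the same in both cases, since $D_1=D_2$ on $Z$) plus an end contribution determined by the poles of the Fourier--Laplace inverse in the appropriate half-plane. The bijection $\mu \leftrightarrow \bar\mu$ of spectral data established above pairs these poles and sends residues to residues, so the end contributions agree and hence $\ind D_1 = \ind D_2$. Alternatively, one could imitate the excision argument of the preceding proposition, pairing $D_1$ against a formal adjoint of $D_2$ sitting on the orientation-reversed $-Z_+^*$, so that the two auxiliary manifolds produced by excision (a compact double of $Z$ and a mirror double of the end) each admit a Dirac operator with vanishing index.

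The main obstacle is the careful sign bookkeeping: the orientation reversal on $-\tilde X_-$ flips chirality ($D^+\leftrightarrow D^-$), the sign reversal $h\mapsto -h$ flips $dh$, and the reparametrization $\mu\mapsto \bar\mu$ of the Fourier--Laplace variable comes from the skew-adjointness of $f^*(d\theta)\cdot$; verifying that these three symmetries compose correctly both on the spectral data and on the residues computing the end contributions to the index is the delicate part of the argument.
\end{sketch}
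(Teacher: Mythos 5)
Your secondary route (``Alternatively, one could imitate the excision argument\ldots'') is essentially the paper's proof: apply excision to $D_1 = D^+(Z_+,g) + \btilde - dh$ on $Z_+$ and $D_2 = D^+(-Z_+^*,g) + \btilde - dh$ on $-Z_+^*$, use the chirality flip to write $D_2 = (D^+(Z_+^*,g) + \btilde + dh)^*$ so that $\ind D_2 = -\ind_\delta D^+(Z_+^*,g,\beta)$, and note that the two auxiliary manifolds produced by excision are the compact double $Z\cup(-Z)$ and the full cyclic cover $\tilde X$. Where you stop short is in asserting, without argument, that these auxiliary operators have vanishing index; for $\tilde X$ this is precisely the non-trivial content. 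The paper closes this gap by the Fourier--Laplace transform: if $D^+(\tilde X,g,\beta)u=0$ with $u$ in $L^2_{1,\delta}$, then $D^+_\mu(X,g,\beta)\hat u_\mu=0$, and since all $D^+_\mu$ with $\Re\mu=\delta$ are isomorphisms, $\hat u_\mu=0$ and hence $u=0$; the cokernel is killed by the same argument applied to $D^-$. You should include this step.

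Your primary route, however, has a genuine gap. You want both $\ind D_1$ and $\ind D_2$ to ``decompose as a contribution from $Z$ \ldots plus an end contribution determined by the poles of the Fourier--Laplace inverse.'' No such decomposition is available. The change-of-index formula~\eqref{E:index} compares $\ind_\delta$ with $\ind_{\delta'}$ for two \emph{weights} on the same end-periodic manifold $Z_+$; it is not an absolute splitting of the index into a compact piece and an end piece (the failure of such a splitting is exactly why excision is needed). Worse, appealing to Section~\ref{S:index-change} here would be circular: the present proposition is invoked in the very first step of the proof of the change-of-index proposition there. On the other hand, your observation that the Fourier--Laplace family attached to the $Z_+^*$ end is, after the chirality flip and sign bookkeeping, the adjoint family $(D^+_{\bar\mu})^*$, and that $\Re\mu=\delta$ is preserved under $\mu\mapsto\bar\mu$, is a correct and useful way to make the ``iff'' explicit, which the paper leaves implicit in the phrase ``which are Fredholm.''
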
 
 
\begin{proof} 
Apply the excision principle to the operators $D_1 = D^+(Z_+,g) \allowbreak + 
\btilde - dh$ and $D_2 = D^+ (-Z^*_+,g) + \btilde - dh$, which are 
Fredholm. Since  
\[ 
D^+(-Z^*_+,g) + \btilde - dh = D^-(Z_+^*,g) + \btilde - dh = 
(D^+(Z_+^*,g) + \btilde + dh)^*,  
\] 
we conclude that $\ind D_2 = - \ind_{\delta}D^+ (Z_+^*,g,\beta)$. The  
excision principle then tells us that 
\[ 
\ind_{\delta}D^+(Z_+,g,\beta) - \ind_{\delta}D^+ (Z_+^*,g,\beta) = 
\ind \bar D_1 + \ind \bar D_2.  
\] 
The operator $\bar D_1$ is (up to zero order terms) the Dirac operator  
$D^+$ on the compact manifold $Z \cup (-Z)$ hence $\ind \bar D_1 = 0$.  
On the other hand, $\bar D_2 = D^+(\tilde X,g)  + \btilde - \delta\,
f^*(d\theta)$ hence $\ind \bar D_2$ is equal to the index of  
\[ 
D^+(\tilde X,g,\beta): L^2_{1,\delta}\,(\tilde X,S^+) \to L^2_{\delta}\,  
(\tilde X,S^-). 
\] 
The latter index is zero which can be seen as follows. Apply the  
Fourier--Laplace transform to the equation $D^+ (\tilde X,g,\beta)(u) = 0$ 
to obtain $D^+_{\mu} (X,g,\beta)\allowbreak (\hat u_{\mu}) = 0$. Since all 
$D^+_{\mu}(X,g,\beta)$ with  $\Re \mu = \delta$ are isomorphisms, we 
conclude that $\hat u_{\mu}  = 0$. Integrating over $I(\nu)$ with $\Re \nu 
= \delta$ gives $u = 0$, hence, $\ker D^+(\tilde X,g,\beta) = 0$. Similarly, 
$\coker D^+ (\tilde X,g,\beta) = 0$ by the same argument applied to the 
operator $D^- (\tilde X,g,\beta)$. 
\end{proof} 
 
 
\section{The change of index formula} \label{S:index-change}
This section is devoted to comparing the indices $\ind_{\delta} D^+(Z_+,g,
\beta)$ for different values of $\delta \in \R$. We continue to assume 
that the pair $(g,\beta)$ belongs to a regular path even though it need 
not be regular itself. The resulting formula \eqref{E:index} contains as 
a special case the formula of \cite[Theorem 1.2]{LM}\;(so in particular 
we provide a new and rather different proof of the latter). The indices 
$\ind D^+ (Z_+,g,\beta)$ for different $g$ and $\beta$ will be compared 
in the next section. 
 
 
\subsection{Reduction to an index problem on $\tilde X$} 
Given $\delta_1$, $\delta_2 \in \R$, consider a smooth function $\delta:  
\tilde X \to [0,1]$ such that $\delta (x) = \delta_1$ on $W_n$ with $n  
\le -1$ and $\delta (x) = \delta_2$ on $W_n$ with $n \ge 1$. Let $h(x) = 
\delta (x) \cdot \tilde f(x)$, and say that $\phi \in L^2_{k;\,\delta_1, 
\delta_2}\,(\tilde X,S^{\pm})$ if and only if $e^h\,\phi \in L^2_k\, 
(\tilde X,S^{\pm})$. In particular, if $\delta_1 = \delta_2 = \delta$,  
we get back the spaces $L^2_{k,\delta}(\tilde X,S^{\pm})$.  
As before, we have the commutative diagram  
\[ 
\begin{CD} 
L^2_{1;\,\delta_1,\delta_2}\,(\tilde X,S^+) @> D^+ >> L^2_{\delta_1, 
\delta_2}\,(\tilde X,S^-) \\ 
@VV e^h V @VV e^h V \\ 
L^2_1 (\tilde X,S^+) @> D^+ - dh >> L^2\,(\tilde X,S^-) 
\end{CD} 
\] 
whose vertical arrows are isomorphisms, and we conclude that  
\begin{equation}\label{E:dd} 
D^+ = D^+ (\tilde X,g,\beta):\; L^2_{1;\,\delta_1,\delta_2}\,(\tilde X,S^+) 
\to L^2_{\delta_1,\delta_2}\,(\tilde X,S^-) 
\end{equation}
is a Fredholm operator for all $\delta_1$, $\delta_2 \in \R$ away from  
a discrete set. We will denote its index by $\ind_{\delta_1,\delta_2}  
D^+(\tilde X,g,\beta)$.  
 
\begin{proposition} 
For any $\delta_1$, $\delta_2 \in \R$ away from a certain discrete set 
with no accumulation points,  
\[ 
\ind_{\delta_2} D^+(Z_+,g,\beta) - \ind_{\delta_1} D^+(Z_+,g,\beta) = 
\ind_{\delta_1,\delta_2} D^+(\tilde X,g,\beta). 
\] 
\end{proposition}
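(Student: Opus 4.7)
The plan is to apply the excision principle of Section~\ref{S:excision} along the common $3$--manifold $Y = \p Z$ and then invoke the Fourier--Laplace vanishing argument already recorded at the end of the previous proof. First, I would split $Z_+ = Z \cup \tilde X_+$ and $\tilde X = \tilde X_- \cup \tilde X_+$ along $Y$, setting $A_1 = Z$, $B_1 = \tilde X_+$, $A_2 = \tilde X_-$, $B_2 = \tilde X_+$. I equip the three non-compact pieces with weighted $L^2$ spaces: $B_1$ and $A_2$ with weight $\delta_1$, and $B_2$ with weight $\delta_2$. In each of the four manifolds obtained by gluing, the underlying differential operator is $D^+(\,\cdot\,,g) + \btilde$; since the weights affect only decay at infinity, these operators agree literally in a neighborhood of $Y$, which is what the excision principle requires.

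Next, I would identify the indices of the four resulting operators. On the original pairs, $D_1$ on $A_1 \cup B_1 = Z_+$ has index $\ind_{\delta_1} D^+(Z_+,g,\beta)$, and $D_2$ on $A_2 \cup B_2 = \tilde X$ has index $\ind_{\delta_1,\delta_2} D^+(\tilde X,g,\beta)$. After the excision swap, $\bar D_1$ lives on $A_1 \cup B_2 = Z_+$ at uniform weight $\delta_2$, contributing $\ind_{\delta_2} D^+(Z_+,g,\beta)$, while $\bar D_2$ lives on $A_2 \cup B_1 = \tilde X$ at uniform weight $\delta_1$, contributing $\ind_{\delta_1} D^+(\tilde X,g,\beta)$. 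All four are Fredholm for $\delta_1,\delta_2$ avoiding the discrete exceptional sets provided by Theorem~\ref{T:taubes} and its mixed-weight analogue (proved by the same Fourier--Laplace criterion together with Theorem~\ref{T:mero}). The excision identity then reads
\[
\ind_{\delta_1} D^+(Z_+,g,\beta) + \ind_{\delta_1,\delta_2} D^+(\tilde X,g,\beta) = \ind_{\delta_2} D^+(Z_+,g,\beta) + \ind_{\delta_1} D^+(\tilde X,g,\beta).
\]

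To close the argument, I would recall from the previous proof that $\ind_{\delta_1} D^+(\tilde X,g,\beta) = 0$ at every weight for which the operator is Fredholm: the Fourier--Laplace transform of any element of the kernel satisfies $D^+_\mu(X,g,\beta)\,\hat u_\mu = 0$ on the vertical line $\Re \mu = \delta_1$, where every member of this family is invertible by hypothesis, so $\hat u_\mu \equiv 0$ and the inversion formula~\eqref{E:zinv} gives $u \equiv 0$; the cokernel is handled by applying the same argument to $D^-$. Rearranging the excision identity then yields the claimed formula. The main point I would want to verify carefully is that the excision principle of Section~\ref{S:excision}, formulated there for unweighted $L^2$, applies verbatim in the weighted setting; because the weight functions are nontrivial only far from the cutting surface $Y$ where all four operators agree as differential operators, this is essentially formal, and a short parametrix computation in the spirit of~\cite{char} suffices to confirm it.
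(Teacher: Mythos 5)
Your argument is correct, and the inputs are the same as the paper's (the excision principle of Section~\ref{S:excision} plus the Fourier--Laplace vanishing $\ind_{\delta}D^+(\tilde X,g,\beta)=0$ on each admissible vertical line), but the route is genuinely more direct. You run a single excision on the four pieces $A_1 = Z$, $B_1 = \tilde X_+$ at weight $\delta_1$, $A_2 = \tilde X_-$ at weight $\delta_1$, $B_2 = \tilde X_+$ at weight $\delta_2$, which gives
\[
\ind_{\delta_1}D^+(Z_+) + \ind_{\delta_1,\delta_2}D^+(\tilde X)
= \ind_{\delta_2}D^+(Z_+) + \ind_{\delta_1}D^+(\tilde X),
\]
and then you kill the last term with the Fourier--Laplace computation. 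The paper instead chains through the preceding proposition ($\ind_{\delta}D^+(Z_+)=\ind_{\delta}D^+(Z_+^*)$, itself proved by a separate excision against $Z\cup(-Z)$ and the same Fourier--Laplace vanishing), rewrites one of the two terms via the adjoint on $-Z_+^*$, and only then excises to land on $\tilde X$ with mixed weights. So you save a step: one excision in place of two plus the adjoint bookkeeping, and you never introduce $Z_+^*$. What the paper's route buys in exchange is that the auxiliary proposition about $Z_+^*$ is a reusable statement of independent interest (it records homology-orientation invariance of the weighted index), while your argument proves only the change-of-weight formula.

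One point worth being slightly more careful about than your phrasing suggests: the operators on the two copies of $\tilde X_+$ (weights $\delta_1$ and $\delta_2$) do \emph{not} literally coincide near $Y$ with the raw choice $h_j = \delta_j \tilde f$, since $\tilde f$ vanishes only \emph{at} $Y$ and not on a collar, so $dh_1 \ne dh_2$ there. The fix is standard and harmless: the weighted index depends only on the behavior of $h$ at infinity up to bounded modifications, so one first replaces each $h_j$ by an equivalent weight function that is identically $\delta_1 \tilde f$ (or zero) on a fixed collar of $Y$. After this normalization all four operators agree as differential operators near the cut, and the excision principle applies as stated. The paper's own proof needs and makes the same tacit normalization, so this is not a defect of your argument, but it deserves one sentence rather than the clause ``agree literally in a neighborhood of $Y$.''
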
 
 
\begin{proof} 
This is seen by a repeated application of the excision principle, the 
functions $h_1$, $h_2: Z_+ \to \R$ extending $\delta_1\cdot \tilde f(x)$ 
and $\delta_2\cdot\tilde f(x)$, respectively\,:
\begin{alignat*}{1} 
\ind_{\delta_2} D^+ (Z_+, & g, \beta) - \ind_{\delta_1} D^+(Z_+,g,\beta) \\ 
&= \ind (D^+(Z_+,g,\beta) - dh_2) - \ind (D^+(Z_+,g,\beta) - dh_1) \\  
&= \ind (D^+(Z_+,g,\beta) - dh_2) - \ind (D^+(Z^*_+,g,\beta) + dh_1) \\  
&= \ind (D^+(Z_+,g,\beta) - dh_2) + \ind (D^+(-Z^*_+,g,\beta) - dh_1) \\ 
&= \empty\hfil \ind (D^+(\tilde X,g,\beta) - dh)\;\;\, =\;  
\ind_{\delta_1,\delta_2} D^+(\tilde X,g,\beta). 
\end{alignat*} 
\end{proof} 
 
 
\subsection{Change of index via residues}\label{S:residues}
Our next goal will be to compute the index  $\ind_{\delta_1,\delta_2}  
D^+(\tilde X,g,\beta)$ in terms of the holomorphic family $D^+_{\mu} 
(X,g,\beta) = D^+ (X,g,\beta) - \mu\cdot f^*(d\theta)$.  
 
Fix a smooth function $\zeta: \tilde X \to \R$ such that $0\le \zeta  
\le 1$, $\zeta = 0$ on $W_n$ with $n \le -1$, and $\zeta = 1$ on $W_n$  
with $n \ge 1$. Let $u \in L^2_{1;\,\delta_1,\delta_2} (\tilde X,S^+)$  
be a solution of the equation $D^+ (\tilde X,g,\beta)(u) = 0$.  Write  
\[ 
u = (1 - \zeta)\,u + \zeta\,u = v + w, 
\] 
where $v = (1 - \zeta)\,u \in L^2_{1,\delta_1}(\tilde X,S^+)$ and $w =  
\zeta\,u\in L^2_{1,\delta_2}(\tilde X,S^+)$. A straightforward calculation  
shows that  
\[ 
D^+ (\tilde X,g,\beta) (v) = - k 
\quad\text{and}\quad 
D^+ (\tilde X,g,\beta) (w) =   k,
\] 
where $k = d\zeta\cdot u$. 

Since $k$ is supported in $W_0$, its Fourier--Laplace transform 
$\hat  k_{\mu}$ is obviously holomorphic as a function of $\mu$ in the 
entire complex plane. Apply Lemma~\ref{L:half-plane} to $w \in 
L^2_{1,\delta_2} (\tilde X,S^+)$ supported in $\tilde X_+$ to conclude 
that $\hat w_{\mu}$ is holomorphic in the half plane $\Re \mu < 
\delta_2$. A similar argument shows that $\hat v_{\mu}$ is holomorphic 
in the half plane $\Re \mu > \delta_1$. Hence the application of the 
Fourier--Laplace transform to the above two equations yields equations  
\[ 
D^+_{\mu}(X,g,\beta)\,(\hat v_{\mu}) = - \hat  k_{\mu} 
\quad\text{and}\quad 
D^+_{\mu}(X,g,\beta)\,(\hat w_{\mu}) =   \hat  k_{\mu},
\] 
that hold in the half planes $\Re \mu > \delta_1$ and $\Re \mu < 
\delta_2$, respectively.
 
The inverse $R_{\mu}$ of the holomorphic family $D^+_{\mu}(X,g,\beta)$ 
is a meromorphic function of $\mu$ in the entire complex plane; see 
Theorem~\ref{T:mero}. Therefore, away from the poles of $R_{\mu}$, we 
have the equations 
\[ 
\hat v_{\mu} = - R_{\mu}\, \hat  k_{\mu} 
\quad\text{and}\quad 
\hat w_{\mu} =   R_{\mu}\, \hat  k_{\mu}. 
\] 
This allows us to extend $\hat v_{\mu}$ and $\hat w_{\mu}$ to meromorphic  
functions in the entire complex plane, called again $\hat v_{\mu}$ and  
$\hat w_{\mu}$. Since (by Proposition~\ref{P:reed1}) the restriction of $\hat v_{\mu}$ to every  
interval $I(\delta_1 + i\alpha)$ is square integrable, we conclude that  
$\hat v_{\mu}$ does not have poles on $\Re \mu = \delta_1$. Similarly,  
$\hat w_{\mu}$ does not have poles on $\Re \mu = \delta_2$. 
 
The function $u = v + w$ with $v \in L^2_{1,\delta_1}\,(\tilde X,S^+)$  
and $w \in L^2_{1,\delta_2}\,(\tilde X,S^+)$ can now be recovered using  
the inverse Fourier--Laplace transform (see \eqref{E:zinv})\,: 
\begin{multline}\notag 
u(x + n)\; =\;  
\frac 1 {2\pi i}\,\int_{I(\delta_2 + i\alpha)} e^{-\mu (f(x)+n)}  
R_{\mu}\,\hat  k_{\mu}(x)\,d\mu \\ - 
\frac 1 {2\pi i}\,\int_{I(\delta_1 + i\alpha)} e^{-\mu (f(x)+n)}  
R_{\mu}\,\hat  k_{\mu}(x)\,d\mu. 
\end{multline} 
Let $\alpha$ be any real number such that $R_{\mu}\,\hat  k_{\mu}(x)$  
does not have poles on the horizontal lines $\Im\mu = \alpha$ and  
(consequently) $\Im\mu = \alpha + 2\pi$, and integrate $e^{-\mu(f(x)+n)} 
R_{\mu}\,\hat  k_{\mu}(x)$ over the positively oriented contour $\Gamma$  
shown in Figure~\ref{fig:contour}. 

\medskip

\begin{figure}[!ht] 
\centering 
\psfrag{A}{$\alpha$} 
\psfrag{B}{$\alpha + 2\pi$} 
\psfrag{0}{$0$} 
\psfrag{d}{$\delta_1$} 
\psfrag{w}{$\delta_2$} 
\psfrag{I0}{$I(\delta_1 + i\alpha)$} 
\psfrag{I1}{$I(\delta_2 + i\alpha)$} 
\psfrag{J1}{$J(\alpha)$} 
\psfrag{J2}{$J(\alpha + 2\pi)$} 
\includegraphics{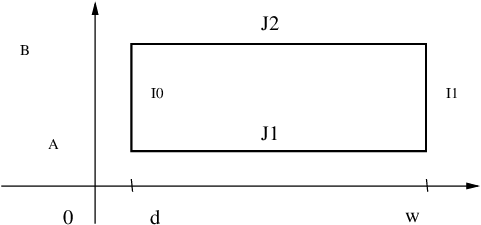} 
\caption{Contour $\Gamma$}
\label{fig:contour} 
\end{figure} 
  
\begin{lemma}\label{L:contour} 
The contributions to the above contour integral from the sides $J(\alpha)$  
and $J(\alpha + 2\pi)$ of $\Gamma$ cancel each other. 
\end{lemma}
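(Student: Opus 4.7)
My plan is to show that the integrand
\[
F(\mu) \;=\; e^{-\mu(f(x)+n)}\,R_\mu\,\hat k_\mu(x)
\]
is invariant under the shift $\mu \mapsto \mu + 2\pi i$; since $J(\alpha)$ and $J(\alpha+2\pi)$ are the two horizontal sides of $\Gamma$, they project to the same interval $[\delta_1,\delta_2]$ on the real axis and are traversed in opposite directions, so their contributions to the contour integral must then cancel.

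The $2\pi i$--periodicity of $F(\mu)$ will be assembled from three pieces. First, the exponential prefactor satisfies
\[
e^{-(\mu+2\pi i)(f(x)+n)} \;=\; e^{-2\pi i f(x)}\,e^{-\mu(f(x)+n)}
\]
because $n \in \Z$. Second, formula~\eqref{E:2pi} applied to $k$ gives $\hat k_{\mu+2\pi i}(x) = e^{2\pi i f(x)}\,\hat k_\mu(x)$. The nontrivial step is the third one: I claim
\[
R_{\mu+2\pi i} \;=\; M_f\,R_\mu\,M_f^{-1},
\]
where $M_f$ denotes multiplication by $e^{2\pi i f}$. To see this, I would apply the Leibniz rule for the Dirac operator to the smooth $S^1$--valued function $e^{2\pi i f}$, using $d(e^{2\pi i f}) = 2\pi i\,e^{2\pi i f}\,f^*(d\theta)$, and then Clifford multiply:
\[
D^+\!\bigl(e^{2\pi i f}\,\phi\bigr) \;=\; e^{2\pi i f}\bigl(D^+\phi \,+\, 2\pi i\,f^*(d\theta)\cdot \phi\bigr).
\]
Since the perturbation term $\beta$ is just zeroth-order multiplication and commutes with $M_f$, this yields $M_f^{-1}\,D^+(X,g,\beta)\,M_f = D^+(X,g,\beta) + 2\pi i\,f^*(d\theta)$. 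Subtracting $\mu\,f^*(d\theta)$ from both sides identifies $M_f^{-1}\,D_\mu^+\,M_f$ with $D_{\mu-2\pi i}^+$, equivalently $D_{\mu+2\pi i}^+ = M_f\,D_\mu^+\,M_f^{-1}$, whence the stated relation for the meromorphic inverses $R_\mu$ from Theorem~\ref{T:mero}.

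Combining the three pieces, $R_{\mu+2\pi i}\,\hat k_{\mu+2\pi i}(x) = M_f\,R_\mu\,M_f^{-1}\bigl(e^{2\pi i f(x)}\,\hat k_\mu\bigr)(x) = e^{2\pi i f(x)}\,R_\mu\,\hat k_\mu(x)$, and multiplying by $e^{-(\mu+2\pi i)(f(x)+n)}$ absorbs the factor $e^{2\pi i f(x)}$ against $e^{-2\pi i f(x)}$, leaving $F(\mu)$ unchanged. Parametrizing $J(\alpha)$ as $s\mapsto s+i\alpha$ and $J(\alpha+2\pi)$ as $s\mapsto s+i(\alpha+2\pi)$ for $s\in[\delta_1,\delta_2]$, and observing that in the counterclockwise orientation of $\Gamma$ the top side is traversed from right to left while the bottom side is traversed from left to right, the two contributions become
\[
\int_{\delta_1}^{\delta_2} F(s+i\alpha)\,ds \,-\, \int_{\delta_1}^{\delta_2} F(s+i(\alpha+2\pi))\,ds \;=\; 0,
\]
which is the desired cancellation.

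The main obstacle, which I do not expect to be severe, is verifying the conjugation identity $D_{\mu+2\pi i}^+ = M_f\,D_\mu^+\,M_f^{-1}$ cleanly; this requires that $f^*(d\theta)$ really is the differential of (a logarithm of) $e^{2\pi i f}$ in a way compatible with Clifford multiplication on spinors, and that $M_f$ preserves the domain of $D_\mu^+$ on $X$. The choice of $\alpha$ guaranteeing that neither $J(\alpha)$ nor $J(\alpha+2\pi)$ meets the discrete pole set $\Ss(g,\beta)$ is harmless since $R_\mu\,\hat k_\mu$ is meromorphic with discrete pole set by Theorem~\ref{T:mero}, and the hypothesis on $\alpha$ stated just before the lemma ensures both horizontal sides avoid poles simultaneously.
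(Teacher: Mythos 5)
Your proposal is correct and takes essentially the same route as the paper: establish the conjugation identity $D^+_{\mu+2\pi i}(X,g,\beta) = e^{2\pi i f}\,D^+_\mu(X,g,\beta)\,e^{-2\pi i f}$, deduce the corresponding relation $R_{\mu+2\pi i} = e^{2\pi i f}\,R_\mu\,e^{-2\pi i f}$ for the meromorphic inverses, combine it with the periodicity relation~\eqref{E:2pi} applied to $\hat k$, and absorb the residual factor $e^{2\pi i f(x)}$ into the exponential prefactor using $e^{-2\pi i n} = 1$ for $n\in\Z$. The only difference is that you spell out the Leibniz-rule computation behind the conjugation identity, whereas the paper simply asserts it; this is a correct elaboration rather than a different argument.
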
 
 
\begin{proof} 
Let $\mu \in J(\alpha)$ then $\mu + 2\pi i \in J(\alpha + 2\pi)$ and 
$D^+_{\mu + 2\pi i}(X,g,\beta) = D^+(X,g,\beta) - (\mu + 
2\pi i) f^*(d\theta) = e^{2\pi i f(x)} (D^+(X,g,\beta) - \mu f^* 
(d\theta)) e^{-2\pi i f(x)}$. Therefore,   
 \[ 
R_{\mu + 2\pi i} = e^{2\pi i f(x)}\,R_{\mu}\;e^{-2\pi i f(x)}. 
\] 
Together with the fact that $\hat  k_{\mu + 2\pi i}(x) = e^{2\pi i f(x)}  
\,\hat  k_{\mu}$ (see~\eqref{E:2pi}) this implies that 
 \[ 
e^{-(\mu + 2\pi i)(f(x)+n)} R_{\mu + 2\pi i}\;\hat  k_{\mu + 2\pi i}(x) 
= e^{-\mu (f(x)+n)} R_{\mu}\;\hat  k_{\mu}(x),  
\]  
and the statement follows.  
\end{proof} 
 
A straightforward application of the Cauchy integral formula then leads us  
to the formula   
\begin{multline}\notag 
u(x + n)\; =\; 
\frac 1 {2\pi i}\;\oint_{\Gamma}\; e^{-\mu (f(x)+n)} R_{\mu}\, 
\hat  k_{\mu}(x)\,d\mu \\ 
= \sum_j\; \Res_{\,\mu_j}\,\left(e^{-\mu (f(x) + n)}\, 
R_{\mu}\;\hat  k_{\mu}(x)\right), 
\end{multline} 
where $\mu_j$ are the poles of $R_{\mu}$ inside the contour $\Gamma$.  This 
formula describes the kernel of the operator \eqref{E:dd}. The residues will be 
explicitly calculated in the next section; note however that if $\delta_1 
\le \delta_2$ then both functions $\hat v_{\mu}$ and $\hat w_{\mu}$ are 
holomorphic inside $\Gamma$ hence the kernel of the operator \eqref{E:dd} 
vanishes. 
 
Next, the cokernel of $D^+(\tilde X,g,\beta)$ in the $L^2_{\delta_1,\delta_2}$ 
norm on $\tilde X$ is isomorphic to the kernel of $D^- (\tilde X,g,\beta)$ in  
the $L^2_{-\delta_1,-\delta_2}$ norm, and hence can be calculated in terms  
of the residues as above. Again, this cokernel vanishes whenever  
$-\delta_1 \le -\delta_2$, that is, $\delta_2\le \delta_1$. Therefore,  
depending on which of the weights $\delta_1$ or $\delta_2$ is larger,  
either the kernel or the cokernel of the operator \eqref{E:dd} vanishes. 
We will assume without loss of generality that $\delta_2 \le \delta_1$, 
so that $\ind_{\delta_1,\delta_2} D^+(\tilde X,g,\beta)$ is equal to the 
dimension of the kernel of $D^+(\tilde X,g,\beta)$ described by the above 
residue formula.  
 
 
\subsection{Calculating the residues} 
Let $\mu_j$ be a pole of $R_{\mu}$, and write the Laurent series of $R_{\mu}\, 
\hat  k_{\mu}(x)$ near $\mu_j$ in the form 
\[ 
R_{\mu}\,\hat  k_{\mu}(x) = \sum_{\ell = -m}^{\infty}\; b_\ell (x)\,
(\mu-\mu_j)^\ell,
\] 
for some spinors $b_{\ell}(x)$. Apply the operator $D_{\mu} = D^+_{\mu}
(X,g,\beta)$ to both sides of this equality to obtain 
\begin{alignat*}{1} 
\hat  k_{\mu} (x)  
&= \sum_\ell\; D_{\mu} b_\ell (x)\,(\mu - \mu_j)^\ell \\ 
&= \sum_\ell\; D_{\mu_j} b_\ell (x)\,(\mu - \mu_j)^\ell -    
\sum_\ell\; f^*(d\theta)\,b_\ell (x)\,(\mu - \mu_j)^{\ell+1}. 
\end{alignat*} 
The fact that $\hat  k_{\mu}$ is an entire function then implies that  
the coefficients $b_\ell (x)$ solve the system  
\begin{equation}\label{E:system} 
\begin{cases} 
\quad D_{\mu_j} b_{-1} = f^*(d\theta)\,b_{-2},\\  
\qquad \cdots \\ 
\quad D_{\mu_j} b_{-m+1} = f^*(d\theta)\,b_{-m},\\  
\quad D_{\mu_j} b_{-m} = 0. 
\end{cases} 
\end{equation}  

\medskip\noindent
The spinors $b_{-1}, \ldots, b_{-m}$ determine the residues as follows. Write  
\begin{multline}\notag 
e^{-\mu (f(x) + n)}\,R_{\mu}\;\hat  k_{\mu}(x)  
= e^{-\mu_j (f(x) + n)}\,e^{-(\mu-\mu_j) (f(x) + n)}\,R_{\mu}\; 
\hat  k_{\mu}(x) \\ 
= e^{-\mu_j (f(x) + n)}\,\sum_{k = 0}^{\infty}\;\frac {(-1)^k} 
{k\,!}\,(f(x) + n)^k\,(\mu-\mu_j)^k\;  
\sum_{\ell = -m}^{\infty}\; b_\ell (x)\,(\mu - \mu_j)^\ell, 
\end{multline} 
so that the residue of $e^{-\mu (f(x) + n)}\,R_{\mu}\;\hat  k_{\mu}(x)$ at 
$\mu_j$ equals 
\[ 
e^{-\mu_j(f(x) + n)}\,\sum_{p=1}^m\;(-1)^{p-1}\,(f(x)+n)^{p-1}\,  
b_{-p}(x)/(p-1)! 
\]  
Keeping in mind that $\tilde f(x) + n = \tilde f (x + n)$ for all $x \in 
W_0$ and  all integers $n$, we can re-write the latter formula as 
\begin{equation}\label{E:one} 
e^{-\mu_j \tilde f(x)}\,\sum_{p=1}^m\;(-1)^{p-1}\,\tilde f(x)^{p-1}\,  
b_{-p}(x)/(p-1)! 
\end{equation}  

Denote by $d(\mu_j)$ the number of linearly independent solutions of 
the equation $D^+(\tilde X,g,\beta)(u) = 0$ of the 
form~\eqref{E:one}. Equivalently, $d(\mu_j)$ is the dimension of the 
vector space of solutions of system~\eqref{E:system}. Note that 
$\ker D_{\mu_j} = 0$ if and only if $d(\mu_j) = 0$.  

\begin{remark}
Expand the meromorphic function $R_{\mu}$ near its pole $\mu_j$ into Laurent 
series,
\[
R_{\mu}\; = \sum_{\ell = -m}^{\infty}\; A_{\ell}\, (\mu - \mu_j)^{\ell}.
\]
The equation $D_{\mu} R_{\mu} = I$ then implies that the operators $A_{\ell}$
solve the system
\[
\begin{cases}
\quad D_{\mu_j} A_{-1} = f^*(d\theta)\,A_{-2},\\  
\qquad \cdots \\ 
\quad D_{\mu_j} A_{-m+1} = f^*(d\theta)\,A_{-m},\\  
\quad D_{\mu_j} A_{-m} = 0 
\end{cases} 
\]

\smallskip\noindent
similar to \eqref{E:system}. In particular, if $0 \ne b \in \im A_{-1}$ 
so that $b = A_{-1} (a)$ then setting $b_{-j} = A_{-j} (a)$ gives a 
solution of the system \eqref{E:system}, with $b_{-1} = b$. Since $A_{-1}$ 
is in fact the operator $P_{\mu_j}$ defined in \eqref{E:proj}, we readily 
conclude that
\begin{equation}\label{E:rank}
\rk P_{\mu_j}\; \le\; d(\mu_j).
\end{equation}
\end{remark}
 
 
\subsection{Change of index formula}\label{SS:index-change}
The proof of Lemma~\ref{L:contour} tells us that the operators 
$D^+_{\mu + 2\pi i}\,(X,g,\beta)$ and $D^+_{\mu}(X,g,\beta)$ are isomorphic.
We will use this fact to write
\[
D^+_z (X,g,\beta) = D^+(X,g,\beta) - \ln z\cdot f^*(d\theta)
\]
for $D^+_{\mu} (X,g,\beta)$ (compare with \eqref{E:Dz}) and also $d(z)$ 
for $d(\mu)$ and $P_z$ for $P_{\mu}$ if $z = e^{\,\mu}$. For any $\delta 
\le \delta'$ that make the operator \eqref{E:taubes} Fredholm, we have 
the following change of index formula\,:
\begin{equation}\label{E:index} 
\ind_{\delta} D^+(Z_+,g,\beta) - \ind_{\delta'} D^+(Z_+,g,\beta) =  
\sum_{e^{\delta} < |z| < e^{\delta'}} \; d (z). 
\end{equation} 
 
 
\section{The spectral flow formula}\label{S:metric} 
In this section, we will describe how the index of the operator \eqref{E:dirac}
changes along a special path $(g_I,\beta_I)$. The argument is strongly 
intertwined with the discussion of parameterized Seiberg--Witten moduli 
spaces $\M_I$ in Section~\ref{S:moduli}. The change of index formula (see 
Theorem \ref{T:sf}), which we refer to as the \emph{spectral flow formula}, 
is much more precise than the formula~\eqref{E:index} of the previous 
section.


\subsection{The reducibles}\label{S:MI}
We begin by reviewing the transversality of the intersection $\M^{0}_I = 
\widetilde \M_I\,\cap\,\p\mZ_I$ for special paths $(g_I,\beta_I)$. 
Understanding this transversality in very concrete terms will be crucial 
for our discussion.

Let $(g_I,\beta_I)$ be a special path as in Theorem~\ref{T:special}
that makes $\M_I$ regular and, in particular, $\M^{0}_I$ at most finite. 
Suppose $\tau \in I$ is such that $\M^{0}(X,g_{\tau},\beta_{\tau})$ is 
not empty. After a change of coordinates on $I$, we may assume that 
$\tau = 0$ and that, for all $t$ sufficiently close to zero, $g_t$ is 
constant. We will use the notations $g_0 = g$ and $\beta_0 = \beta$.

Let $[0,A,0,\phi]$ be a point in $\M^{0}(X,g,\beta) \subset \M^{0}_I = 
(\p\chi_I)^{-1} (d^+\beta_I)$. The fact that $\p\chi_I$ is transversal to 
the section $d^+\beta_I$ means that the linearization of the map $\p\chi_I 
- d^+\beta_I: \p\mZ_I \to \Omega^2_+ (X,i\R)$ at $[0,A,0,\phi]$ is a 
surjective linear operator. This operator will be called $\D$. The kernel 
of $\D$, which is necessarily zero dimensional, is then the tangent space 
to $\M^{0}_I$ at $[0,A,0,\phi]$. 

Using special paths allows us to avoid differentiating the metric, which 
provides for a particularly simple formula for $\D$. More precisely, the 
operator $\D$ in question is the operator
\[
\D: \R \times \Omega^1 (X;i\R) \times \Gamma(S^+)^{\perp}\longrightarrow 
\H^0(X,i\R)^{\perp} \times \Omega^2_+ (X,i\R) \times \Gamma(S^-)
\]
given by 
\[
\D (v,b,\psi) = (-d^*b,\; d^+b - v\;d^+ \dot\beta,\; D^+_A (X,g)\,(\psi) + 
b \cdot \phi),
\]
compare with~\cite[Lemma 27.1.1]{KM}. Here, $\Gamma(S^+)^{\perp}$ 
consists of all spinors $\psi$ such that $<\phi,\psi>_{L^2}\,= 0$, and 
$\H^0 (X;i\R)^{\perp} \subset \Omega^0 (X,i\R)$ consists of all functions 
$h: X \to i\R$ perpendicular to the subspace of constant functions $\H^0 
(X;i\R) = i\R$. The notation $\dot\beta$ means the derivative of 
$\beta_t$ with respect to $t$ evaluated at $t = 0$. 

Let us change variables $(v,b,\psi)$ to $(v,a,\psi)$ with $a = b - v \dot
\beta$. The above operator then takes the form
\[
\D (v,a,\psi) = (-d^*a,\;d^+a,\;D^+_A (X,g)\,(\psi) + a\cdot\phi + v \dot 
\beta \cdot \phi)
\]
(remember that $d^* \beta_t = 0$ for all $t$, hence $d^*\dot\beta = 0$).
In plain terms, the vanishing of $\ker \D$ means that the following system 
of equations on $v$, $a$, and $\psi$, has a unique solution $(v,a,\psi) = 
(0,0,0)$:

\begin{equation}\label{E:sys1}
\begin{cases}
\; d^* a = 0, & \ \\
\; d^+ a = 0, & \ \\
D^+_A (X,g)\,(\psi) + a\cdot\phi + v\dot\beta\cdot\phi = 0, & \ \\
<\phi,\psi>_{L^2}\,= 0. \ &
\end{cases}
\end{equation}

\medskip


\subsection{Harmonic functions}
Before we go on to deduce our spectral flow formula, we need to fix the
function $f: X \to \R$ that was built into the definition of $\ind D^+
(Z_+,g,\beta)$ but has remained pretty much arbitrary until now. We will
choose $f$ to be harmonic. The following existence and uniqueness result 
for harmonic functions can be found in Eells and Lemaire~\cite[Section 7]{EL}.

\begin{lemma}\label{L:harmonic}
For any metric $g$ on $X$, there exists a function $f: X \to S^1$ that 
is harmonic with respect to $g$ and has the property that $f^*(d\theta)$ 
represents the generator $1 \in \Z = H^1 (X;\Z)$. Moreover, such an $f$ 
is unique up to translation of $S^1$. 
\end{lemma}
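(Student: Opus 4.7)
\smallskip
\noindent\textbf{Proof plan.} The plan is to use Hodge theory. A smooth map $f\colon X\to S^1=\R/\Z$ with $[f^*(d\theta)]$ equal to the generator $1\in H^1(X;\Z)$ is, up to translation, the same data as a closed $1$-form $\omega$ on $X$ whose de Rham class is the generator of $H^1(X;\R)$ and whose periods are integral. The map is recovered from $\omega$ by fixing a basepoint $x_0\in X$ and setting
\[
f(x)\;=\;\int_{x_0}^x\omega\pmod{\Z},
\]
which is well defined precisely because the periods of $\omega$ are integers, and the harmonicity condition $\Delta f=d^*df=0$ on $f$ translates into $d^*\omega=0$. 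So I need to produce a closed, coclosed $1$-form on $X$ in the integral class $1$.

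For existence, I would invoke the Hodge decomposition theorem for the closed Riemannian $4$-manifold $(X,g)$: every de Rham cohomology class in $H^1(X;\R)$ has a unique harmonic representative. Since $X$ is a homology $S^1\times S^3$, $H^1(X;\Z)=\Z$ sits inside $H^1(X;\R)=\R$ as the integer lattice, and I take $\omega$ to be the harmonic representative of $1\in H^1(X;\Z)$. Then $d\omega=0=d^*\omega$, and the periods of $\omega$ are integers by construction. Integrating $\omega$ as above produces a harmonic $f\colon X\to S^1$ with $f^*(d\theta)=\omega$, establishing existence.

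For uniqueness, suppose $f_1$ and $f_2$ are two such harmonic maps. Then $\omega_i=f_i^*(d\theta)$ are two harmonic $1$-forms on $X$ both representing $1\in H^1(X;\R)$, so by the uniqueness clause of Hodge theory $\omega_1=\omega_2$. Hence $d(f_1-f_2)=0$ on the universal cover (or locally), so $f_1-f_2$ is locally constant; connectedness of $X$ then forces $f_1-f_2$ to be a constant element of $S^1$, i.e.\ a translation.

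The only nontrivial step is the existence of the harmonic representative, and this is pure Hodge theory on a closed Riemannian manifold; no real obstacle arises. The main point to verify with care is simply that the period-integrality of the harmonic representative of an integer class is automatic, so that the indefinite integral of $\omega$ descends to a well-defined map into $\R/\Z$ rather than merely $\R$.
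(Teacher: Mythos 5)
Your proof is correct and is precisely the standard Hodge-theoretic argument that the paper implicitly invokes by citing Eells--Lemaire: take the harmonic representative of the integral class, observe that its periods are integral because periods depend only on the cohomology class, integrate to get a circle-valued map, and note that harmonicity of $f:X\to S^1$ is equivalent to $d^*(f^*d\theta)=0$ since $S^1$ is flat. The uniqueness argument via uniqueness of the harmonic representative and connectedness of $X$ is also the standard one. One small remark: the concern you flag at the end is not actually a delicate point, since period-integrality is immediate from the fact that the harmonic form and the original integral cocycle are cohomologous.
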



\subsection{Change of index as spectral flow}\label{S:change}
Let $(g_0,\beta_0)$ and $(g_1,\beta_1)$ be two regular pairs of metrics and 
perturbations connected by a special path $(g_I,\beta_I)$ as in Theorem 
\ref{T:special} so that $\M_I$ is regular and $g_I$ is constant near each 
$t \in I$ where $\M^{0}(X,g_t,\beta_t)$ is non-empty. Choose a smooth path 
of functions $f_t: X \to S^1$ harmonic with respect to $g_t$. Denote by 
$d_t(z)$ the dimension of the space of solutions of system~\eqref{E:system} 
corresponding to the choice of metric $g_t$ and perturbation $\beta_t$. 

\begin{proposition}\label{P:gen}
The set of pairs $(t,z)$ with $\ker D^+_z (X,g_t,\beta_t) \ne 0$ and
$|z| = 1$ is in bijective correspondence with the points in $\M^{0}_I$. 
Moreover, for any $(t,z)$ in this set, $d_t (z) = \rk P_z = 1$.
\end{proposition}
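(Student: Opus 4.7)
The bijection will come almost directly from the structure of the reducible locus described in Section~\ref{S:reds}, while the rank equalities will reduce, via finite-dimensional perturbation theory for the holomorphic family $D^+_\mu$, to the non-vanishing of a single complex number that can then be extracted from the transversality built into the special path $(g_I,\beta_I)$. First, fix $t\in I$. By the description in Section~\ref{S:reds}, a point of $\M^{0}(X,g_t,\beta_t)$ is a gauge equivalence class of pairs $(A,\phi)$ with $[A]$ on the circle~\eqref{E:circle} and $[\phi]\in\P(\ker D^+_A(X,g_t))$. Under the parametrization~\eqref{E:Dz} of this circle by $z\in S^1$, the operators $D^+_A(X,g_t)$ and $D^+_z(X,g_t,\beta_t)$ agree, so $[A,\phi]\mapsto(t,z)$ sends $\M^{0}_I$ onto the set of $(t,z)$ with $|z|=1$ and $\ker D^+_z(X,g_t,\beta_t)\ne 0$; the fiber over each such $(t,z)$ is $\P(\ker D^+_z)$, which by Remark~\ref{R:dim=1} is a single point. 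That gives the bijection.

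For the second assertion, I would set $\mu_j=\ln z\in i\R$ and choose unit generators $\phi\in\ker D^+_{\mu_j}$ and $\phi^-\in\ker D^-_{\mu_j}$; both kernels are one-dimensional over $\C$ since $D^+_{\mu_j}$ has index zero and Remark~\ref{R:dim=1} controls the $+$-kernel. The holomorphic family $D^+_\mu=D^+_{\mu_j}-(\mu-\mu_j)f^*(d\theta)$ admits a standard Lyapunov--Schmidt decomposition along the splittings $\Gamma(S^+)=\C\phi\oplus\phi^\perp$ and $\Gamma(S^-)=\C\phi^-\oplus(\phi^-)^\perp$. Carrying out this reduction should show that the singular behaviour of $R_\mu$ at $\mu_j$ is governed entirely by the complex number $\kappa=\langle f^*(d\theta)\,\phi,\phi^-\rangle_{L^2(X)}$: when $\kappa\ne 0$, the pole will be simple, the residue will act by $P_{\mu_j}(\eta)=-\langle\eta,\phi^-\rangle\phi/\kappa$ (hence $\rk P_z=1$), and the recursive system~\eqref{E:system} will admit only $m=1$ solutions, forcing $d_t(z)=\dim_\C\ker D^+_{\mu_j}=1$.

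To finish, I would prove $\kappa\ne 0$ by restricting the kernel condition for $\D$ from Section~\ref{S:MI} to triples of the form $(0,a,\psi)$: these satisfy $d^*a=0$, $d^+a=0$, $D^+_A\psi+a\cdot\phi=0$, and $\psi\in\phi^\perp$, and triviality of $\ker\D$ must force $a=0$ and $\psi=0$. Using $H^2_+(X,i\R)=0$, a short Hodge-theoretic argument (via the elliptic operator $d^*+d^+:\Omega^1(X,i\R)\to\Omega^0(X,i\R)\oplus\Omega^2_+(X,i\R)$) shows that $d^*a=0=d^+a$ cuts out exactly $H^1(X,i\R)$; Lemma~\ref{L:harmonic} together with the choice of $f$ as $g_t$-harmonic identifies this line with $i\R\cdot f^*(d\theta)$, so $a=icf^*(d\theta)$ with $c\in\R$. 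For any $c\ne 0$, existence of $\psi\in\phi^\perp$ solving $D^+_A\psi=-icf^*(d\theta)\phi$ is equivalent to $icf^*(d\theta)\phi\in\im D^+_A=(\phi^-)^\perp$, and this must fail by triviality of $\ker\D$; hence $\langle icf^*(d\theta)\phi,\phi^-\rangle\ne 0$, i.e., $\kappa\ne 0$. The hard part will be the Lyapunov--Schmidt step in the middle paragraph: although it is routine perturbation theory for holomorphic families of Fredholm operators, it must be executed carefully enough to read off the simple-pole assertion, the explicit rank-one form of $P_{\mu_j}$, and the termination of~\eqref{E:system} at $m=1$ uniformly from the non-vanishing of the single scalar $\kappa$.
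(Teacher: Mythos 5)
Your bijection argument is the same as the paper's. The route to $d_t(z)=\rk P_z=1$ is genuinely different, though both exploit the same transversality. The paper argues by contradiction: $d(z)\ge 1$ is automatic since $\dim_\C\ker D^+_z=1$, and if $d(z)>1$ the tail of system~\eqref{E:system} supplies $(b_{-m+1},b_{-m})$ with $b_{-m}\ne 0$; after normalizing $\|b_{-m}\|_{L^2}=1$ and $\langle b_{-m},b_{-m+1}\rangle_{L^2}=0$, the triple $(v,a,\psi)=(0,-if^*(d\theta),ib_{-m+1})$ with $\phi=b_{-m}$ is a nonzero solution of~\eqref{E:sys1}, contradicting regularity; then $\rk P_z=1$ follows from~\eqref{E:rank} together with $\rk P_z\ge 1$. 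You instead isolate the scalar $\kappa=\langle f^*(d\theta)\phi,\phi^-\rangle$, show $\kappa\ne 0$ using the same linearization $\D$, and run Lyapunov--Schmidt/Schur-complement to deduce a simple pole, the explicit rank-one form of $P_{\mu_j}$, and termination of~\eqref{E:system} at $m=1$. These are two faces of the same fact: the paper's contradiction is precisely the assertion that $f^*(d\theta)\phi\notin\im D^+_{\mu_j}$, i.e.\ your $\kappa\ne 0$. Your version is longer but buys more: the exact order of the pole and an explicit residue, and it does not need to invoke~\eqref{E:rank}. Two small remarks: the Hodge classification of $\{a:\, d^*a=0,\ d^+a=0\}$ is more than you need --- it suffices that the one candidate $a=icf^*(d\theta)$ is coclosed and anti-self-dual, which is immediate from $f$ being $g_t$-harmonic, and this is exactly the substitution the paper makes; and you should say explicitly that the formula for $\D$ in Section~\ref{S:MI} requires a special path in the sense of Theorem~\ref{T:special}, since otherwise $\D$ carries an additional term coming from $\dot g$.
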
 

\begin{proof} 
Let $\tau \in I$ be such that $\M^{0}(X,g_{\tau},\beta_{\tau}) \subset 
\M^{0}_I$ is not empty. After changing coordinates, we will assume as in 
Section~\ref{S:MI} that $\tau = 0$ and write $g_0 = g$, $f_t = f$, $d_t 
(z) = d(z)$, and $\beta_0 = \beta$. A quadruple $[0,A,0,\phi]$ belongs 
to $\M^{0}(X,g,\beta)$ if and only if $F^+_A = d^+ \beta$, $D^+_A (X,g)
\,(\phi) = 0$, and $\|\phi\|_{L^2} = 1$. Up to gauge equivalence, $A = 
\beta - \ln z\cdot f^*(d\theta)$ for some $z \in \C$ with $|z| = 1$. 
Therefore, $D^+_z (X,g,\beta) = D^+_A (X,g)$, and we have the claimed 
bijective correspondence. 

We know from Proposition~\ref{P:kernels} that $\dim_{\,\C}\ker D^+_z (X,g,
\beta) = 1$, so that $d (z) \ge 1$. Suppose that $d (z) > 1$. Then the 
last two equations of the system~\eqref{E:system}, 
\[
\begin{cases}
\; D^+_z (X,g,\beta)\,(b_{-m+1}) = f^*(d\theta)\cdot b_{-m} \\
\; D^+_z (X,g,\beta)\,(b_{-m}) = 0
\end{cases}
\]
have a solution with $b_{-m+1} \ne 0$ and $b_{-m} \ne 0$. Without loss of 
generality, we may assume that $\|b_{-m}\|_{L^2} = 1$ and 
$<b_{-m},b_{-m+1}>_{L^2}\, = 0$. But then the triple $(v,a,\psi) = 
(0,- if^*(d\theta),\,ib_{-m+1})$ is a non-zero solution of the 
system~\eqref{E:sys1} with $\phi = b_{-m}$ and $A = \beta - \ln z\cdot 
f^*(d\theta)$, a contradiction. That $\rk P_z = 1$ now follows from 
\eqref{E:rank}.
\end{proof}
 
\medskip 
 
\begin{figure}[!ht] 
\centering 
\psfrag{C}{$\C$} 
\psfrag{t}{$t$} 
\psfrag{G}{$\Sm$} 
\includegraphics{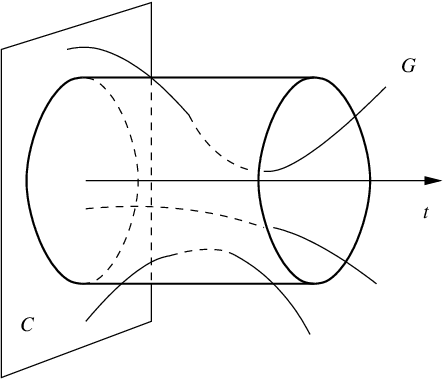} 
\label{fig:sf} 
\caption{Spectral curves}
\end{figure} 
 
Let $D^+_z (X,g_t,\beta_t)$ be a family of perturbed Dirac operators as 
above and consider the subset
\[ 
\Sm_I = \{\,(t,z)\in [0,1] \times \C^*\;|\;\ker D^+_z\,(X,g_t,\beta_t)
\ne 0\,\} 
\] 
of $[0,1] \times \C^*$. Its projection onto the second coordinate is in
essence the parameterized spectral set $\Ss_I$ defined in \eqref{E:Ss}. 
According to Theorem~\ref{T:spectra} and Proposition~\ref{P:gen}, the set 
$\Sm_I$ is just a finite family of smooth curves near the cylinder 
$C =\{\,(t,z)\in [0,1]\times \C^*\;|\;|z| = 1\,\}$. They will be referred 
to as the \emph{spectral curves}. These curves intersect the cylinder $C$ 
in finitely many points $(t,z)$ with multiplicity one. We will show later 
in Lemma \ref{L:non-sing} that these intersections are in fact transversal. 

The \emph{spectral flow} $\SF (D^+_z(X,g_I,\beta_I))$ along a special path 
$(g_I,\beta_I)$ is defined by counting the intersection points 
$\Sm_I\,\cap\,C$ with  sign $-1$ if the spectral curve is entering the 
cylinder $C$, and $+1$ if it is leaving.
 
\begin{theorem}\label{T:sf}
Let $\beta_0$, $\beta_1 \in \P$ be such that the pairs
$(g_0,\beta_0)$ and $(g_1,\beta_1)$ are regular. Then the operators 
\[
D^+(Z_+,g_0,\beta_0),\, D^+(Z_+,g_1,\beta_1):\;
L^2_1 (Z_+,S^+) \to L^2 (Z_+,S^-)
\]
are Fredholm on any periodic end manifold $Z_+$ whose end is modeled on 
$\tilde X$, and their indices differ by $\SF (D^+_z (X,g_I,\beta_I))$ 
for any special path of metrics and perturbations.  
\end{theorem}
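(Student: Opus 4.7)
The plan is to reduce the theorem to two pieces: (a) the endpoint operators are Fredholm on unweighted $L^2$, and (b) the index of $D^+(Z_+,g_t,\beta_t)$ jumps across crossings of the spectral set with the unit cylinder in a way that sums to the spectral flow. Part (a) is immediate from Theorem~\ref{T:taubes} combined with Proposition~\ref{P:kernels}: since $(g_0,\beta_0)$ and $(g_1,\beta_1)$ are regular, every $D^+_z(X,g_i,\beta_i)$ with $|z|=1$ is invertible, so $D^+(Z_+,g_i,\beta_i)$ is Fredholm. For part (b) I would work along a special path $(g_I,\beta_I)$, so that Proposition~\ref{P:gen} identifies $\Sm_I\cap C$ bijectively with the finite set $\M^{0}_I$ and $g_t$ is constant near each crossing $\tau$, drastically simplifying the derivative computation below.

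The technical heart is the forthcoming Lemma~\ref{L:non-sing}, which asserts that the spectral curves meet $C$ transversally. I will prove it by contradiction. Via Lemma~\ref{L:harmonic}, fix $f_\tau:X\to S^1$ harmonic for $g_\tau$. Suppose at a crossing $(\tau,z_0)$ the tangent vector $\dot z/z_0$ to the spectral curve lies in $i\R$. Differentiating $D^+_{z_t}(X,g_t,\beta_t)(\phi_t)=0$ at $t=\tau$, using $g_t\equiv g_\tau$ there, yields
\[
D^+_{z_0}(X,g_\tau,\beta_\tau)(\dot\phi)\; =\; \frac{\dot z}{z_0}\,f^*(d\theta)\cdot\phi_0\;-\;\dot\beta\cdot\phi_0.
\]
Under the tangency hypothesis, $a:=-(\dot z/z_0)\,f^*(d\theta)$ takes values in $i\R$, and I claim that the triple $(v,a,\psi)=(1,\,a,\,\dot\phi-\langle\phi_0,\dot\phi\rangle\phi_0)$ solves the system~\eqref{E:sys1}. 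Indeed, harmonicity of $f_\tau$ gives $d^*a=0$, closedness of $f^*(d\theta)$ gives $d^+a=0$, the spinor equation reduces to the displayed identity, and orthogonality of $\psi$ to $\phi_0$ is built in. Since $v=1\neq 0$, this contradicts $\ker\D=0$ coming from regularity of $(g_I,\beta_I)$.

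With transversality established, the index comparison is bookkeeping with the change-of-index formula~\eqref{E:index}. Choose $\epsilon>0$ small enough that no spectral curve meets $|z|=e^{\pm\epsilon}$ for any $t\in I$; then $\ind_\epsilon D^+(Z_+,g_t,\beta_t)$ is locally and hence globally constant in $t$. At any regular $t$, \eqref{E:index} with weights $0$ and $\epsilon$ gives
\[
\ind_0 D^+(Z_+,g_t,\beta_t)\; =\; \ind_\epsilon D^+(Z_+,g_t,\beta_t)\; +\; \#\{\,z\in\Ss(g_t,\beta_t)\;:\;1<|z|<e^\epsilon\,\},
\]
with each spectral value counted once by Proposition~\ref{P:gen}. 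At a transversal crossing exactly one spectral value passes through $|z|=1$, so the second term jumps by $\pm 1$ according to whether the curve is leaving ($|z|$ increases past $1$) or entering ($|z|$ decreases past $1$), matching the sign convention in the definition of $\SF$. Summing over crossings yields $\ind D^+(Z_+,g_1,\beta_1)-\ind D^+(Z_+,g_0,\beta_0)=\SF(D^+_z(X,g_I,\beta_I))$.

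The main obstacle is Lemma~\ref{L:non-sing}; its subtlety is the matching between real and complex structures. The $i\R$-valuedness requirement on $a$ in~\eqref{E:sys1} is precisely what fails when the spectral curve is transverse to $C$, so the tangency hypothesis $\Re(\dot z/z_0)=0$ is exactly what legitimizes the candidate element of $\ker\D$. The harmonicity of $f_\tau$ is essential so that both gauge-fixing equations hold without introducing derivatives of $f$ into the linearized equations.
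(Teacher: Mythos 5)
Your Fredholmness argument and your proof of the transversality of the spectral curves (the forthcoming Lemma~\ref{L:non-sing}) are both correct; the latter is essentially identical to the argument the paper gives in Section~\ref{S:signs}, and the paper defers it in exactly the same way you propose.

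The gap is in the final bookkeeping step. You write ``Choose $\epsilon>0$ small enough that no spectral curve meets $|z|=e^{\pm\epsilon}$ for any $t\in I$,'' and then argue that $\ind_\epsilon D^+(Z_+,g_t,\beta_t)$ is globally constant. No such $\epsilon$ exists, and the transversality you just proved is precisely what rules it out: if a spectral curve $z(t)$ crosses the cylinder $C=\{|z|=1\}$ at $t=\tau$ with $\frac{d}{dt}|z(t)|\big|_{t=\tau}\neq 0$, then $|z(t)|$ sweeps out a full neighborhood of $1$ as $t$ sweeps a small neighborhood of $\tau$, so for every $\epsilon>0$ there are times $t$ near $\tau$ at which the curve lies on $|z|=e^{\epsilon}$ (or on $|z|=e^{-\epsilon}$). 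At those times the weighted operator fails to be Fredholm and $\ind_\epsilon$ is undefined, so the ``locally and hence globally constant'' argument does not run, and the subsequent application of \eqref{E:index} at ``any regular $t$'' cannot use a single uniform weight over all of $I$. If one tried to track $\#\{z\in\Ss(g_t,\beta_t):1<|z|<e^\epsilon\}$ across the whole interval, spectral points would leak in and out of the annulus between crossings, and the count would pick up spurious contributions.

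The remedy is to localize. Work separately near each crossing time: reparametrize so the crossing occurs at $t=0$, choose $\ep_0>0$ so the spectral curves through the crossing points leave $C$ and remain simple for $0<|t|\le\ep_0$, and then for a small weight $\delta>0$ choose $\ep<\ep_0$ (depending on $\delta$) so small that $\Sm_I$ avoids the slab $\{\,|t|\le\ep,\,|z|=e^{\delta}\,\}$. With this order of choices, $\ind_\delta D^+(Z_+,g_t,\beta_t)$ is defined and constant on $[-\ep,\ep]$, while $\ind_0 D^+(Z_+,g_{\pm\ep},\beta_{\pm\ep})$ is defined because $\ep<\ep_0$; applying~\eqref{E:index} at $t=\pm\ep$ gives the local jump of $\ind_0$ as $\sum_{1<|z|<e^\delta}d_{\ep}(z)-\sum_{1<|z|<e^\delta}d_{-\ep}(z)$, which is the local signed contribution of the spectral curves leaving or entering $C$ at $t=0$. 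Summing over crossings gives the theorem. The order of choices --- first $\ep_0$, then $\delta$, then $\ep$ depending on $\delta$ --- is forced by the geometry and cannot be collapsed into a single global weight.
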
 
 
\begin{proof} 
Since $\M^{0} (X,g_0,\beta_0)$ and $\M^{0} (X,g_1,\beta_1)$ are both empty, 
the families $D^+_z (X,g_0,\beta_0)$ and $D^+_z (X,g_1,\beta_1)$ have zero 
kernels on the unit circle $|z| = 1$ by Proposition~\ref{P:gen}. The 
operators $D^+(Z_+,g_0,\beta_0)$ and $D^+(Z_+,g_1,\beta_1)$ are then 
Fredholm by Proposition~\ref{P:taubes}. 

Suppose that $\tau \in (0,1)$ is such that not all operators 
$D^+_z (X,g_{\tau}, \beta_{\tau})$ with $|z| = 1$ have zero kernel. After 
changing coordinates, we will assume that $\tau = 0$. Let $z_j$ ($j = 1,
\ldots,m$) be all the points in the complex plane such that $|z_j| = 1$ 
and $d_0 (z_j) = 1$.  
 
Choose $\ep_0 > 0$ so that the portions of all the spectral curves through 
$(0,z_j)$, $j = 1,\ldots,m$, cut out by the condition $0 < |t| \le \ep_0$, 
do not intersect the cylinder $C$ and have the property that $d_t(z) \le 1$. 
By continuity of spectral curves, for any small $\delta > 0$, one can find 
$\ep > 0$ such that $\ep < \ep_0$ and the intersection of $\Sm_I$ with the 
cylinder $\{\,(t,z)\;|\;|t| \le \ep,\, |z| = e^{\delta}\,\}$ is empty. Then 
we have well defined indices $\ind_{\delta} D^+ (Z_+,g_t,\beta_t)$ for all 
$t \in [- \ep,\ep]$ and, by continuity of the index,  
\[ 
\ind_{\delta} D^+ (Z_+,g_{-\ep},\beta_{-\ep}) = \ind_{\delta} D^+ 
(Z_+,g_{\ep},\beta_{\ep}). 
\] 
On the other hand, equation~\eqref{E:index} provides us with the formulas 
\[ 
(\ind - \ind_{\delta}) (D^+ (Z_+,g_{-\ep},\beta_{-\ep})) =  
\sum_{1 < |z| < e^{\delta}} \; d_{-\ep}(z)
\]
and
\[
(\ind - \ind_{\delta}) (D^+ (Z_+,g_{\ep},\beta_{\ep})) =  
\sum_{1 < |z| < e^{\delta}} \; d_{\ep}(z)
\]
where $d_{\pm \ep}(z)$ is zero, except at finitely many points where it 
is one. Combining the three formulas above, we obtain 
\begin{multline}\notag 
\ind D^+ (Z_+,g_{\ep}, \beta_{\ep}) -  
\ind D^+ (Z_+,g_{-\ep}, \beta_{-\ep}) \\ = 
\sum_{1 < |z| < e^{\delta}} \; d_{\ep}(z) - 
\sum_{1 < |z| < e^{\delta}} \; d_{-\ep}(z). 
\end{multline} 
One can further observe that, if $d_{-\ep}(z) = 1$ then $(-\ep,z)$ belongs 
to the same component of $\Sm_I$ as $(0,z_j)$ for some $j$; the latter 
contributes $-1$ to the spectral flow. Similarly, if $d_{\ep}(z) = 1$ then  
$(\ep,z)$ belongs to the same component of $\Sm_I$ at $(0,z_k)$ for some $k$; 
the latter contributes $+1$ to the spectral flow. 
\end{proof}


\section{The invariant}\label{S:invariant}
In this section, we will define the invariant $\lambda_{\,\SW}(X)$ and prove 
the first statement of Theorem \ref{T:main} regarding it.


\subsection{The invariant}
Let $g$ be an arbitrary metric on $X$ and choose a regular pair $(g,\beta)$
of metric and perturbation. Define 
\[
\lambda_{\,\SW} (X) = \#\,\M(X,g,\beta) - w\,(X,g,\beta).
\]

\begin{theorem}\label{T:comparison}
\; $\lambda_{\SW} (X)$ is independent of the choice of regular pair 
$(g,\beta)$.
\end{theorem}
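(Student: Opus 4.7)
Given two regular pairs $(g_0,\beta_0)$ and $(g_1,\beta_1)$, I would join them by a special regular path $(g_I,\beta_I)$ as in Theorem~\ref{T:special}, so that the parameterized moduli space $\M_I$ is a regular oriented $1$-manifold, and so that $g_I$ is constant near each $t \in I$ at which $\M^{0}(X,g_t,\beta_t)$ is non-empty. By Theorem~\ref{T:reg},
\[
\#\,\M(X,g_1,\beta_1)\,-\,\#\,\M(X,g_0,\beta_0)\;=\;\#\,\M^{0}_I.
\]
Since the signature term $\sign(Z)/8$ is manifestly independent of $(g,\beta)$, Theorem~\ref{T:sf} gives
\[
w\,(X,g_1,\beta_1)\,-\,w\,(X,g_0,\beta_0)\;=\;\SF (D^+_z (X,g_I,\beta_I)).
\]
Thus the plan reduces the theorem to the single identity
\[
\#\,\M^{0}_I\;=\;\SF (D^+_z (X,g_I,\beta_I)),
\]
and what needs to be checked is that a sign-by-sign bijection holds between crossings on both sides.

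First I would use Proposition~\ref{P:gen} to obtain a set-theoretic bijection between $\M^{0}_I$ and the intersections $\Sm_I\cap C$ of the spectral curves with the cylinder $|z|=1$. It remains to compare signs at each crossing $(\tau,z_0,A,\phi)$. After a change of parameter I may assume $\tau=0$ and $g_t$ is constant in $t$ near $0$, and write $A=\beta-\ln z_0\cdot f^*(d\theta)$, where $f$ is the harmonic function of Lemma~\ref{L:harmonic}. Near this crossing the spectral curve is parameterized as $(t,z(t))$ with $z(0)=z_0$ and an associated unit spinor $\phi(t)\in\ker D^+_{z(t)}(X,g_t,\beta_t)$; the sign contribution to $\SF$ is the sign of $\tfrac{d}{dt}|z(t)|^2$ at $t=0$, i.e.\ the sign of $\Re(\dot z/z_0)$. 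Differentiating the defining equation $D^+_{z(t)}(X,g_t,\beta_t)\,\phi(t)=0$ in $t$, pairing with $\phi$ in $L^2$, and using that $\phi\perp\dot\phi$ together with the self-adjointness of Clifford multiplication by $i\beta$ and $i f^*(d\theta)$, yields a linear relation of the form
\[
-\,(\dot z/z_0)\,\langle f^*(d\theta)\cdot\phi,\phi\rangle_{L^2}\;+\;\langle\dot\beta\cdot\phi,\phi\rangle_{L^2}\;=\;0,
\]
which determines $\dot z$ and hence the crossing sign purely in terms of data on $X$.

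On the Seiberg--Witten side the orientation of the point of $\M^{0}_I$ at the crossing is, by definition, the sign on the one-dimensional kernel of the linearization $\D$ of Section~\ref{S:MI}. Using the kernel basis vector $(v,a,\psi)$ and the explicit form \eqref{E:sys1} of $\D$, I would project the third equation onto $\phi$: since $\D$ has index zero and $\dim_\C\ker D^+_A(X,g)=1$, this projection reduces, after dividing out the gauge symmetry direction $a=-v\cdot f^*(d\theta)$ dictated by the first two equations, to a scalar linear equation whose coefficient is exactly $-\langle f^*(d\theta)\cdot\phi,\phi\rangle_{L^2}$ times $v$ plus $\langle\dot\beta\cdot\phi,\phi\rangle_{L^2}$ times $v$. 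Matching these two computations shows that the orientation of $\M^{0}_I$ at the crossing agrees with the sign convention of the spectral flow (entering $C$ counted as $-1$, leaving as $+1$).

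The main obstacle I anticipate is precisely this sign matching: the orientation of $\M^{0}_I$ comes from a determinant line convention for the parameterized SW family, while the spectral flow sign is intrinsic to how eigenvalues (or rather, poles of the resolvent) move across $|z|=1$, and the two conventions involve independent choices (homology orientation on $X$, choice of lift $\tilde f$, convention on the Fourier--Laplace transform). The key observation that makes them match is that Clifford multiplication by $f^*(d\theta)$, which drives the motion in $z$ via \eqref{E:Dz}, is exactly the term that appears in the linearization \eqref{E:sys1} after eliminating the gauge direction, and the same pairing $\langle f^*(d\theta)\cdot\phi,\phi\rangle_{L^2}$ controls both sides. Once this is established at each crossing, summing over crossings gives $\#\,\M^{0}_I=\SF(D^+_z(X,g_I,\beta_I))$ and hence $\lambda_{\,\SW}(X)$ is independent of the choice of regular pair.
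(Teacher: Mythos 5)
Your overall strategy is exactly the one the paper uses: reduce the statement to the identity $\#\,\M^{0}_I = \SF(D^+_z(X,g_I,\beta_I))$ via Theorems~\ref{T:reg} and~\ref{T:sf}, invoke Proposition~\ref{P:gen} for the set-theoretic bijection, and then match signs crossing-by-crossing. Your identification of the spectral-flow sign with $\sign\bigl(\tfrac{d}{dt}\ln|z(t)|\bigr)=\sign(\dot a)$ is also correct and matches Corollary~\ref{C:or1}. But the sign comparison itself — the entire content of Section~\ref{S:signs} — is not carried out correctly, and this is where the real work lies.

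Two concrete problems. First, your proposed pairing does not typecheck: $D^+_z$ maps $\Gamma(S^+)\to\Gamma(S^-)$, so differentiating $D^+_{z(t)}\phi(t)=0$ produces an equation in $\Gamma(S^-)$, and you cannot pair $D^+_z\dot\phi$, $f^*(d\theta)\cdot\phi$, or $\dot\beta\cdot\phi$ (all negative spinors) against $\phi\in\Gamma(S^+)$. What is actually needed is to pair against an element of $\ker D^-_z = \coker D^+_z$, or equivalently to project onto $\coker D^+_A(X,g)$ as in Equation~\eqref{E:diff} and the surrounding discussion. The quantity you called $\langle f^*(d\theta)\cdot\phi,\phi\rangle_{L^2}$ has no meaning; the paper instead works with $\pi(f^*(d\theta)\cdot\phi)$ and $\pi(if^*(d\theta)\cdot\phi)$ as an ordered basis of the complex line $\coker D^+_A$ (Corollary~\ref{C:basis}), and that ordered basis is what pins down the complex orientation.

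Second, and more fundamentally, you assert that the sign of a point of $\M^{0}_I$ is ``the sign on the one-dimensional kernel of the linearization $\D$.'' But $\ker\D=0$ at a regular intersection point — that is exactly what transversality gives — so there is no such sign. The orientation of a regular point in a zero-dimensional parameterized moduli space comes from the determinant line $\det\D$, which must be compared to a reference trivialization. The paper performs this comparison by deforming $\D$ to $\D_0 = (-d^*\oplus d^+)\oplus D^+_A(X,g)$ through the one-parameter family $\D_u$, computing $\ker\D_0\cong\R^2$ and $\coker\D_0\cong\C$ with their canonical orientations, and then evaluating the orientation transport via the resonance operator $R_0:\ker\D_0\to\coker\D_0$ using Nicolaescu's formula. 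The $2\times 2$ matrix of $R_0$ in the natural bases turns out to be $\begin{pmatrix}\dot a & 0\\\dot c & 1\end{pmatrix}$, whose determinant $\dot a$ recovers the spectral-flow sign (Lemma~\ref{L:non-sing}, Corollary~\ref{C:or1}). Your proposal skips this orientation-transport step entirely; the scalar quantities you wish to extract by ``projecting onto $\phi$'' are not obviously related to the determinant-line convention, and without the deformation to $\D_0$ one has no reference point against which a sign can be measured. This is the genuine gap.
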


\begin{proof}
Given two regular pairs, $(g_0,\beta_0)$ and $(g_1,\beta_1)$, choose a 
special path $(g_I,\beta_I)$ as in Theorem~\ref{T:special} so that the 
parameterized moduli space $\M_I$ is regular and the metric $g_t$ is 
constant near every value of $t \in I$ where $\M^{0}(X,g_t,\beta_t)$ is 
non-empty. According to Theorem~\ref{T:reg}, 
\[
\#\,\M (X,g_1,\beta_1) - \#\M (X,g_0,\beta_0)\, =\, \#\,\M^{0}_I,
\]
where $\#\,\M^{0}_I$ stands for the signed count of points in $\M^{0}_I$.
On the other hand, Theorem~\ref{T:sf} tells us that
\[
w\,(X,g_1,\beta_1) - w\,(X,g_0,\beta_0)\, =\, \SF (D^+_z (X,g_I,\beta_I)),
\]
a signed count of points on $\Sm_I\,\cap\,C$. According to Proposition 
\ref{P:gen}, the points in $\M^{0}_I$ and in $\Sm_I\,\cap\,C$ are in a 
bijective correspondence. That the corresponding points in $\M^{0}_I$ and
$\Sm_I\,\cap\,C$ are counted with the same sign is proved in the following
section.
\end{proof}


\subsection{Comparing signs}\label{S:signs}
We continue with the calculation that we star\-ted in Section~\ref{S:MI}.
To figure out the orientation of $[0,A,0,\phi] \in \M^{0}_I$, consider 
the path of Fredholm operators 
\[
\D_u (v,a,\psi) = (-d^*a,\; d^+a,\; D^+_A (X,g)\,(\psi) + ua\cdot\phi + 
uv\dot\beta\cdot\phi)
\]
parameterized by $u \in [0,1]$.  It connects the operator $\D_0 = (-d^*\,
\oplus\,d^+)\,\oplus\,D^+_A (X,g)$ to our operator $\D = \D_1$. We will 
compute the orientation of $[0,A,0,\phi]$ by calculating the orientation 
transport along $\D_u$ using the formula (1.5.9) from~\cite{Nic}. That 
formula expresses the orientation transport as the product 
\begin{equation}\label{E:ot}
\sign \det (R_0)\, \cdot\, \sign \det (R_1)\, \cdot \prod_{u \in [0,1)}\; 
(-1)^{\,\dim\ker \D_u},
\end{equation}
where $R_u: \ker \D_u \to \coker \D_u$, $u = 0, 1$, are \emph{resonance 
operators} defined as the derivative $(d/du)(\D_u - \D_0)$ evaluated 
at $u = 0, 1$, followed by the $L^2$ orthogonal projection $\pi$ onto 
$\coker \D_u$.

First we observe that, for any $u \ne 0$, the operator $\D_u$ has zero 
kernel, because any non-zero solution $(v,a,\psi)$ of the system
\begin{equation}\label{E:sys2}
\begin{cases}
\; d^* a = 0, & \ \\
\; d^+ a = 0, & \ \\
D^+_A (X,g)\,(\psi) + ua\cdot\phi + uv\dot\beta\cdot\phi = 0, & \ \\
<\phi,\psi>_{L^2}\,= 0, \ &
\end{cases}
\end{equation}
would give a non-zero solution $(v,a,\psi/u)$ of system~\eqref{E:sys1}.
Since $\ker \D_0$ is even dimensional, the orientation transport is
simply the sign of the determinant of the resonance operator $R_0: 
\ker \D_0 \to \coker\D_0$. Of course, both $\ker \D_0$ and $\coker \D_0$ 
need to be oriented. 

A straightforward calculation shows that $\ker\D_0 = \R \oplus \H^1(X;i\R)
\cong \R^2$ and $\coker\D_0 = \coker D^+_A (X,g) = \C$. They are canonically 
oriented, the former by the choice of homology orientation and the latter 
by the complex structure on $\coker D^+_A (X,g)$. 

Fix an isomorphism $\R^2 = \ker \D_0$ sending $(v,c) \in \R^2$ to $(v,
icf^*(d\theta),0)\allowbreak \in \ker \D_0$ (remember that $f: X \to 
\mathbb R$ was chosen to be harmonic). Since $(\D_u - \D_0)(v,a,\psi) = 
(0,\;0,\; ua \cdot \phi + uv \dot\beta \cdot \phi)$, we conclude that 
the resonance operator $R_0: \R^2 \to \coker D^+_A$ can be written as
\begin{equation}\label{E:res}
R_0(v,c) = \pi\,(ic f^* (d\theta)\cdot\phi + v\dot\beta\cdot\phi).
\end{equation}

\begin{lemma} 
\quad $\pi(i f^*(d\theta) \cdot \phi) \ne 0$.
\end{lemma}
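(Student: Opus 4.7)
The plan is to argue by contradiction, using the transversality of $\M^0_I$ established in Section~\ref{S:MI}. Recall that $\pi$ is orthogonal projection onto $\coker D^+_A(X,g)$, which under the $L^2$ inner product coincides with $(\im D^+_A)^{\perp}$. So the assumption that $\pi\bigl(if^*(d\theta)\cdot\phi\bigr) = 0$ is equivalent to the existence of a spinor $\tilde\psi$ solving
\[
D^+_A(X,g)\,\tilde\psi \,=\, i\,f^*(d\theta)\cdot\phi.
\]
Using $D^+_A\phi = 0$, I would replace $\tilde\psi$ by $\psi_0 = \tilde\psi - \langle\phi,\tilde\psi\rangle_{L^2}\cdot\phi$ to arrange $\psi_0 \in \Gamma(S^+)^{\perp}$ without destroying the equation.

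From $\psi_0$, I want to manufacture a nonzero element of $\ker\D$, where $\D$ is the linearization from Section~\ref{S:MI}; since $\M^0_I$ is a $0$-dimensional regular manifold, $\ker\D = 0$, and any such construction yields a contradiction. The natural candidate is
\[
(v,a,\psi) \,=\, \bigl(0,\; -i\,f^*(d\theta),\; \psi_0\bigr).
\]
Plugging into the system~\eqref{E:sys1}: the third equation reduces to $D^+_A\psi_0 + (-if^*(d\theta))\cdot\phi = if^*(d\theta)\cdot\phi - if^*(d\theta)\cdot\phi = 0$, and the orthogonality condition $\langle\phi,\psi_0\rangle_{L^2} = 0$ holds by construction.

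The two equations on $a$ are where the harmonic choice of the reference function $f$ becomes essential. Having invoked Lemma~\ref{L:harmonic} to make $f\colon X\to S^1$ harmonic with respect to $g$, the $1$-form $f^*(d\theta)$ is both closed (automatic from $d\circ d = 0$ in local coordinates on $S^1$) and co-closed (since $\Delta f = 0$ means $d^*df = 0$). Thus $d^*a = -i\,d^*f^*(d\theta) = 0$, and $d^+a = -\tfrac12\,i(d + *d)f^*(d\theta) = 0$; the term $v\,d^+\dot\beta$ vanishes because $v=0$.

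Since $f^*(d\theta)$ represents the generator of $H^1(X;\Z)$ and is in particular not identically zero, the triple $(0,-if^*(d\theta),\psi_0)$ is a nonzero element of $\ker\D$, contradicting the transversality input. I expect the only delicate point to be keeping track of the factors of $i$ (the perturbation $a$ lives in $\Omega^1(X,i\R)$, so Clifford multiplication by $a$ differs by $i$ from Clifford multiplication by the real form $f^*(d\theta)$); a minor verification is also that the harmonicity, rather than just cohomological triviality, of $f^*(d\theta)$ is what lets the candidate $a$ satisfy both Coulomb-gauge and self-duality conditions simultaneously.
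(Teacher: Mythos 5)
Your proof is correct and follows the same approach as the paper: assume the projection vanishes, find a preimage of $if^*(d\theta)\cdot\phi$ under $D^+_A$, orthogonalize against $\phi$, and exhibit the nonzero triple $(0,-if^*(d\theta),\psi_0)$ as a solution of system~\eqref{E:sys1}, contradicting regularity of $\M^0_I$. One small refinement: $d^+a = 0$ already follows from $f^*(d\theta)$ being closed (any pullback of $d\theta$ is closed), so harmonicity of $f$ is genuinely needed only for the Coulomb condition $d^*a = 0$.
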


\begin{proof}
Suppose that, on the contrary, $\pi (if^*(d\theta) \cdot \phi) = 0$.  Then
there is $\psi$ such that $i f^*(d\theta) \cdot \phi = D^+_A (X,g)\,(\psi)$. 
Since $D^+_A (X,g)\,(\phi) = 0$, we may assume without loss of generality 
that $<\phi,\psi>_{L^2}\,= 0$. But then $(v,a,\psi) = (0, -if^*(d\theta),
\psi)$ is a non-zero solution of the system~\eqref{E:sys1}, a contradiction.
\end{proof}

\begin{corollary}\label{C:basis}
The spinors $\pi(f^*(d\theta)\cdot \phi)$ and $\pi(i f^*(d\theta) \cdot \phi)
= i \pi \allowbreak (f^*(d\theta)\cdot \phi)$ form a positively oriented basis 
in $\coker D^+_A (X,g) = \C$.
\end{corollary}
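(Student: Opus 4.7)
The plan is quite short: reduce the claim to the fact that for any nonzero vector $v$ in a complex line, the pair $(v,iv)$ is a positively oriented real basis.

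First I would record that the perturbed chiral Dirac operator $D^+_A(X,g)$ is complex-linear on spinors, since the perturbation $\beta\in\P$ acts as Clifford multiplication by a (pure imaginary) $1$-form, which commutes with multiplication by the scalar $i$ on $S^+$. Consequently $\coker D^+_A(X,g)$ inherits the structure of a complex vector space, and the $L^2$-orthogonal projection $\pi\colon\Gamma(S^-)\to\coker D^+_A(X,g)$ is itself $\C$-linear. In particular,
\[
\pi(if^*(d\theta)\cdot\phi)\;=\;i\,\pi(f^*(d\theta)\cdot\phi),
\]
which is the equality stated in Corollary~\ref{C:basis}.

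Next, by the preceding Lemma we know $\pi(if^*(d\theta)\cdot\phi)\neq 0$. Combined with the $\C$-linearity of $\pi$, this forces $\pi(f^*(d\theta)\cdot\phi)\neq 0$ as well; otherwise the right-hand side of the displayed equality would vanish. So $v:=\pi(f^*(d\theta)\cdot\phi)$ is a nonzero element of the complex line $\coker D^+_A(X,g)\cong\C$.

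Finally I would invoke the orientation convention: the canonical orientation on the one-dimensional complex vector space $\coker D^+_A(X,g)$ is the one for which $(v,iv)$ is a positively oriented real basis for every nonzero $v$ (this is how the complex structure on $\coker D^+_A(X,g)$ was used to orient it in the paragraph preceding the corollary). Applying this with $v=\pi(f^*(d\theta)\cdot\phi)$ gives the desired statement. The only substantive point, and hence the one I would take most care with, is checking the $\C$-linearity of $\pi$, i.e.\ that the perturbation terms do not break the complex structure of $\coker D^+_A(X,g)$; everything else is formal.
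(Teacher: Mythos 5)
Your proof is correct and is essentially the argument the paper intends (the paper leaves the corollary without an explicit proof, since it is immediate from the lemma and the orientation convention). The key points—that Clifford multiplication by the imaginary $1$-form $\beta$ and by $f^*(d\theta)$ is $\C$-linear, so $\pi$ is $\C$-linear and $\pi(if^*(d\theta)\cdot\phi)=i\,\pi(f^*(d\theta)\cdot\phi)$; that the lemma then forces $\pi(f^*(d\theta)\cdot\phi)\ne 0$; and that the complex orientation on $\coker D^+_A(X,g)$ makes $(v,iv)$ positively oriented for any nonzero $v$—are exactly what is needed, and you have verified them.
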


Let us next study $\pi(\dot\beta \cdot \phi) \in \coker D^+_A (X,g)$. Up to 
gauge equivalence, $A = \beta - \ln z\cdot f^*(d\theta)$ for some $z\in \C$ 
with $|z| = 1$ so that $D^+_A(X,g) = D^+_z(X,g,\beta)$, and the unit spinor
$\phi$ spans $\ker D^+_z (X,g,\beta) = \C$. Let $(t,z_t)$ be the spectral 
curve through $(0,z)$; it is a smooth curve if $t$ stays sufficiently close 
to $0$. Write $\ln z_t = a_t + ic_t$ so that $\ln z = a_0 + ic_0 = ic$, and 
consider a path of unit spinors $\phi_t$ such that $\phi_0 = \phi$ and 
\[
\phi_t \in \ker (D^+ (X,g) - (a_t + ic_t)\,f^*(d\theta) + \beta_t).
\] 
Differentiate the equation 
\[
(D^+(X,g) - (a_t + ic_t)\,f^*(d\theta) + \beta_t)(\phi_t) = 0
\]
with respect to $t$ at $t = 0$. Since $\dot D^+(X,g) = 0$, we obtain
\[
D^+ (X,g)(\dot\phi) + (- (\dot a + i\dot c) f^*(d\theta)\cdot
\phi + \dot\beta\cdot\phi) + (-ic f^* (d\theta) + \beta)\cdot \dot\phi = 0,
\]
or, equivalently, 
\begin{equation}\label{E:diff}
D^+_A (X,g)(\dot\phi)\; =\; (\dot a + i\dot c) f^*(d\theta) \cdot\phi - 
\dot\beta\cdot\phi.  
\end{equation}
Projecting onto $\coker D^+_A (X,g)$ (along the image of $D^+_A (X,g)$), 
we obtain 
\begin{alignat*}{1}
\pi (\dot\beta\cdot\phi) 
&= \pi ((\dot a + i\dot c) f^*(d\theta)\cdot\phi) \\
&= \dot a\;\pi (f^*(d\theta)\cdot\phi)) + \dot c\;\pi(i f^*(d\theta)
\cdot\phi).
\end{alignat*}
Therefore, with respect to the basis of $\coker D^+_A (X,g)$ given by 
Corollary~\ref{C:basis}, the resonance operator $R_0$ has the matrix
\[
R_0\; =\; 
\begin{pmatrix}
\dot a & 0 \\
\dot c & 1 
\end{pmatrix}.
\]

\begin{lemma}\label{L:non-sing}
The operator $R_0$ is non-singular, that is, $\det R_0 = \dot a \ne 0$.
\end{lemma}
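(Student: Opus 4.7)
The plan is to reduce Lemma~\ref{L:non-sing} directly to the regularity of $\M^{0}_I$ by exhibiting a canonical linear isomorphism $\ker R_0 \cong \ker\D$. Since $\D = \D_1$ has zero kernel by regularity of the path $(g_I,\beta_I)$ at the reducible $[0,A,0,\phi]$, this will immediately force $\ker R_0 = 0$ and hence $\det R_0 = \dot a \ne 0$.

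First I would unpack the system~\eqref{E:sys1} defining $\ker\D$ explicitly. Because $f$ is harmonic (Lemma~\ref{L:harmonic}) and $H^1(X;\R)\cong\R$, the first two equations force $a = ic\,f^*(d\theta)$ for a unique $c\in\R$, so that such $(v,a)$ is parametrized by the same pair $(v,c)\in\R^2$ used to identify $\ker\D_0$ just before~\eqref{E:res}. The third equation then reads
\[
D^+_A\,\psi \;=\; -(ic\,f^*(d\theta) + v\,\dot\beta)\cdot\phi,
\]
and is solvable for some $\psi\in\Gamma(S^+)$ if and only if its right-hand side projects to zero in $\coker D^+_A$. Substituting the identity for $\pi(\dot\beta\cdot\phi)$ established just before the statement and reading off coefficients in the basis of Corollary~\ref{C:basis}, this obstruction works out to be precisely $-R_0(v,c)$; hence $(v,c)$ lifts to a solution exactly when $(v,c)\in\ker R_0$.

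Next, given a particular $\psi_0$, I would invoke Remark~\ref{R:dim=1}, which states $\ker D^+_A = \C\phi$, to find the unique $\lambda\in\C$ making $\psi = \psi_0 + \lambda\phi$ satisfy $\langle\phi,\psi\rangle_{L^2}=0$. This defines the map $\ker R_0 \to \ker\D$, whose inverse is simply $(v,a,\psi)\mapsto (v,c)$; composition yields the promised linear isomorphism. The regularity hypothesis then kills $\ker\D$ and hence $\ker R_0$, as desired. The step most deserving of care is the identification of the solvability obstruction with $R_0$ in the correct basis: it is entirely a matter of sign- and basis-bookkeeping with ingredients already assembled, but must be done carefully so as not to confuse the roles of the parameter $v$ and the harmonic coordinate $c$.
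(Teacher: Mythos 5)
Your argument is correct, and it follows a somewhat more structural route than the paper's. The paper argues by contradiction: assuming $\dot a = 0$, it uses equation~\eqref{E:diff} and the path of spinors $\phi_t$ along the spectral curve to write down the explicit nonzero element $(1, -i\dot c\,f^*(d\theta), \dot\phi + \alpha\phi)$ of $\ker\D$, contradicting regularity in one line. You instead build a linear isomorphism $\ker R_0 \cong \ker\D$ once and for all: the first two equations of~\eqref{E:sys1}, together with $b_2^+(X)=0$ and harmonicity of $f$, force $a = icf^*(d\theta)$ and so parametrize the data by $(v,c)\in\R^2\cong\ker\D_0$; the third equation is solvable iff $\pi\bigl((icf^*(d\theta)+v\dot\beta)\cdot\phi\bigr)=0$, which is literally $R_0(v,c)=0$ by the definition~\eqref{E:res} (note you do not even need to substitute the $\pi(\dot\beta\cdot\phi)$ identity at this step --- that identity is only required to write $R_0$ in matrix form and identify $\det R_0 = \dot a$; also your asserted obstruction $-R_0(v,c)$ has the wrong sign, though this is immaterial for kernel vanishing); finally $\ker D^+_A = \C\phi$ and the orthogonality constraint pin down $\psi$ uniquely. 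Your version makes the role of $R_0$ as the cokernel obstruction transparent and avoids needing the specific trick of differentiating the spectral-curve equation to manufacture $\psi$, while the paper's version is more economical precisely because it reuses~\eqref{E:diff}, which the surrounding text has already deployed to compute the matrix of $R_0$.
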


\begin{proof}
Suppose on the contrary that $\dot a = 0$. Let $\psi = \dot\phi + \alpha
\phi$ and choose $\alpha \in \C$ so that $<\phi,\psi>_{L^2}\, = 0$. 
Then~\eqref{E:diff} implies that $(v,a,\psi) = (1, - i\,\dot c f^*(d\theta),
\allowbreak \psi)$ is a non-zero solution of the system~\eqref{E:sys1}, 
a contradiction.
\end{proof}

A straightforward application of the orientation transport formula 
\eqref{E:ot} gives us the following result.

\begin{corollary}\label{C:or1}
The point $[0,A,0,\phi] \in \M^{0} (X,g,\beta)$ is oriented by $\sign\,
(\dot a)$. 
\end{corollary}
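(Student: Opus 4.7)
The plan is to apply Nicolaescu's orientation transport formula~(1.5.9) directly to the path $\D_u$, $u\in[0,1]$, constructed just above. That formula expresses the orientation of $[0,A,0,\phi]$ as the triple product
\[
\sign\det(R_0)\,\cdot\,\sign\det(R_1)\,\cdot\,\prod_{u\in[0,1)}(-1)^{\dim\ker\D_u}.
\]
Since the matrix of $R_0$ has already been exhibited, everything reduces to showing that the two remaining factors each equal $+1$.

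For the product over $u\in[0,1)$, I would invoke the rescaling observation already recorded in the discussion of~\eqref{E:sys2}: any non-zero $(v,a,\psi)\in\ker\D_u$ with $u\ne 0$ would yield, via $\psi\mapsto\psi/u$, a non-zero solution of~\eqref{E:sys1}, contradicting $\ker\D = \ker\D_1 = 0$ (which in turn follows from the regularity of $\M^{0}_I$ established by the special-path construction). Hence $\dim\ker\D_u = 0$ for every $u\in(0,1]$, and the product collapses to $(-1)^{\dim\ker\D_0}$. Using the identification $\ker\D_0 = \R\oplus H^1(X;i\R)\cong\R^2$ already noted, this equals $(-1)^2=+1$. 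For the $R_1$ factor: since $\D$ is Fredholm of index $0$ and surjective (by the transversality built into the choice of special path), both $\ker\D_1$ and $\coker\D_1$ vanish, so $R_1$ is the trivial operator between zero-dimensional spaces and $\det R_1 = +1$ by convention.

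With the second and third factors reduced to $+1$, the orientation equals $\sign\det(R_0)$. Reading off the matrix $R_0=\bigl(\begin{smallmatrix}\dot a & 0\\ \dot c & 1\end{smallmatrix}\bigr)$ computed above, one has $\det R_0 = \dot a$, which is non-zero by Lemma~\ref{L:non-sing}, and so the orientation is $\sign(\dot a)$ as claimed. The only real point of care in this argument is the bookkeeping for the product over $u\in[0,1)$: one needs to know that $u=0$ is the unique point at which $\D_u$ loses invertibility, and this is exactly what the rescaling argument on~\eqref{E:sys2} provides. Everything else is a direct assembly of the linear-algebra data already prepared in Section~\ref{S:signs}.
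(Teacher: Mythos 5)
Your proposal is correct and takes essentially the same route as the paper: apply Nicolaescu's orientation-transport formula to $\D_u$, show the $u>0$ kernels vanish via the rescaling into system~\eqref{E:sys2}, use even-dimensionality of $\ker\D_0$ to kill the product factor, and read off $\sign\det R_0 = \sign(\dot a)$ from the matrix already computed. The only difference is that you make explicit the triviality of the $R_1$ factor (since $\ker\D_1 = \coker\D_1 = 0$ by the transversality in the definition of $\D$), a detail the paper leaves implicit.
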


This concludes the proof of Theorem~\ref{T:comparison}, because 
$\sign(\dot a) = \pm 1$ is precisely the contribution of the 
point $(0,z) \in \Sm_I\,\cap\, C$ corresponding to $[0,A,0,\phi] \in 
\M^{0}_I$ to the spectral flow. 


\section{Relation with the Rohlin invariant}\label{S:rohlin}
Let $X$ be an oriented spin smooth homology $S^1\times S^3$ with a fixed 
homology orientation, that is, a generator $1 \in H^1 (X;\Z)$. Choose a 
connected 3-manifold $Y \subset X$ dual to this generator. Note that $Y$ 
is canonically oriented, and inherits a spin structure from $X$. The 
Rohlin invariant of $X$ is defined as $\rho (X) = \sign\,(Z)/8 \pmod 2$, 
where $Z$ is any smooth compact spin manifold with spin boundary $\p Z = 
Y$; see ~\cite{ruberman:ds,ruberman-saveliev:survey,scharlemann:phs}. We 
will show that 
\[
\lambda_{\,\SW}(X) = \#\,\M(X,g,\beta) - \ind_{\,\C} D^+ (Z_+,g,\beta)
- \sign\,(Z)/8
\]
reduces mod 2 to the Rohlin invariant by arguing that first, $\# \M
(X,g,\beta)$ is even because $\M(X,g,\beta)$ has quaternionic structure, 
and second, that $\ind_{\,\C} D^+ (Z_+,g,\beta)$ is even because $D^+ 
(Z_+,g,\beta)$ is quaternionic linear. Neither is actually true unless 
we take special care of choosing proper metrics and perturbations, as 
described below. 


\subsection{Generic metrics}
Let $Z_+ = Z\,\cup\,\tilde X_+$ be an end-periodic manifold whose end is 
modeled on the infinite cyclic cover of $X$. For any choice of metric 
$g$ on $X$, the Dirac operator 
$D^+ (Z_+,g): L^2_1\,(Z_+,S^+)\to L^2\,(Z_+,S^-)$ is quaternionic linear,
hence its index $\ind_{\,\C} D^+ (Z_+,g)$ is even -- assuming of 
course that $D^+(Z_+,g)$ is Fredholm. So far we know two ways of ensuring 
Fredholmness. We can use Corollary~\ref{C:taubes} to conclude that 
$D^+ (Z_+,g): L^2_{1,\delta} (Z_+,S^+) \to L^2_{\delta} (Z_+,S^-)$ is 
Fredholm for all but a discrete set of $\delta \in \R$, or we can combine 
Propositions~\ref{P:reg1} and~\ref{P:gen} to conclude that $D^+(X,g,\beta): 
L^2_1\,(Z_+,S^+) \to L^2\,(Z_+,S^-)$ is Fredholm for generic $\beta \in \P$.  
However, introducing either a weight $\delta \ne 0$ or a perturbation 
$\beta \ne 0$ ruins the quaternionic linearity of the Dirac operator. 
The paper~\cite{RS} of the second and third authors provides a better way 
of achieving Fredholmness by perturbing the metric alone and hence 
preserving the quaternionic linearity. 

\begin{theorem}
The operator $D^+ (Z_+,g): L^2_1\,(Z_+,S^+)\to L^2\,(Z_+,S^-)$ is Fredholm
for a generic choice of metric $g$ on $X$. 
\end{theorem}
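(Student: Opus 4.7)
The plan is to reduce the Fredholmness statement to a condition on a family of twisted Dirac operators on $X$ and then establish genericity by a Sard--Smale argument on the space of metrics. By Theorem~\ref{T:taubes} applied with $\beta = 0$ and weight $\delta = 0$, the operator $D^+(Z_+,g) : L^2_1(Z_+,S^+) \to L^2(Z_+,S^-)$ is Fredholm if and only if every operator in the holomorphic family $D^+_z(X,g) = D^+(X,g) - \ln z \cdot f_g^*(d\theta)$ is invertible for all $z \in \C^*$ with $|z|=1$, where $f_g : X \to S^1$ is the harmonic representative of $1 \in H^1(X;\Z)$ provided by Lemma~\ref{L:harmonic}.

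So the goal is to show that for a generic metric $g$ the parameterized spectral set $\Ss(g,0)$ (which is discrete by Theorem~\ref{T:mero}) avoids the unit circle. Let $\met^\ell$ denote a separable Banach manifold of $C^\ell$ metrics on $X$, and consider the universal zero set
\[
\mathcal{K}\, =\, \{\,(g,z,\phi)\;|\; g \in \met^\ell,\; |z|=1,\; \|\phi\|_{L^2}=1,\; D^+_z(X,g)\,\phi = 0\,\}/S^1,
\]
where $S^1$ acts by phase on $\phi$. The first step is to verify, by the implicit function theorem, that $\mathcal{K}$ is a Banach manifold cut out transversally by the universal map $(g,z,\phi) \mapsto D^+_z(X,g)\phi$. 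Once this is done, the projection $\pi : \mathcal{K} \to \met^\ell$ is a smooth Fredholm map. A dimension count shows its index equals $-1$: the fiber over $g$ is generically empty because the family $D^+_z(X,g)$ has complex index zero as $z$ varies, the constraint $|z|=1$ cuts down by one real dimension, and the projectivization of the spinor removes another, giving virtual dimension $-1$. By the Sard--Smale theorem, the set of regular values of $\pi$ is residual in $\met^\ell$; for any such $g$ the fiber $\pi^{-1}(g)$ must be empty, which is precisely the required invertibility of $D^+_z(X,g)$ on $|z|=1$.

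The hard part, as always in such arguments, is establishing the surjectivity of the universal linearization at a point $(g,z_0,[\phi_0]) \in \mathcal{K}$. Given $\eta \in \coker D^+_{z_0}(X,g)$, the variations in $z$ contribute pairings $\langle -\dot{(\ln z)}\, f_g^*(d\theta)\cdot \phi_0,\,\eta\rangle$, but since $\dot{(\ln z)}$ is purely imaginary on $|z|=1$ this sweeps out only a real one-dimensional subspace. The remaining directions in $\coker$ must therefore be reached by metric variations, using the standard formula for the derivative of $D^+(X,g)$ with respect to $g$ (and the implicit deformation of $f_g$, handled by the gauge freedom exploited in the proof of Theorem~\ref{T:mero}). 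The main obstacle is to show that, were no metric variation to produce a non-trivial pairing with $\eta$, then the pointwise Clifford algebra relations between $\phi_0$ and $\eta$ would force one of them to vanish on an open set; a unique continuation argument for the Dirac operator then yields a contradiction with $\phi_0 \ne 0$.

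Granting this transversality, the Sard--Smale argument goes through and a standard argument (approximating a smooth metric by $C^\ell$ metrics and using that Fredholmness is preserved under small perturbations in the appropriate operator norm) upgrades the residual set of regular $C^\ell$ metrics to a residual set of smooth ones. This delivers the theorem, and combined with the fact that $D^+(Z_+,g)$ remains quaternionic linear for all such $g$ gives the evenness of $\ind_{\,\C} D^+(Z_+,g)$ needed in Section~\ref{S:rohlin}.
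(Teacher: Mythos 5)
The paper does not actually prove this theorem: it states the result and refers the reader to the companion paper~\cite{RS}, so there is no in-paper argument to check your proposal against. Your overall Sard--Smale framework is the expected strategy, and your reduction via Theorem~\ref{T:taubes} (with $\beta=0$, $\delta=0$) together with the index-$(-1)$ count for the projection from the universal zero set $\mathcal{K}$ to $\met^{\ell}$ are both sound (though the dimension bookkeeping ``complex index zero, minus one for $|z|=1$, minus one for projectivizing'' is loose as worded; the cleaner count uses the index of $D^+_z$ restricted to $\phi^\perp$ plus one real direction from $\dot z$).

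The step you defer is, however, the entire content of the theorem, and as written it has a genuine gap. At $z=\pm1$ the operator $D^+_z(X,g)$ is quaternionic linear (the quaternionic structure $j$ conjugates $D^+_z$ to $D^+_{\bar z}=D^+_{1/z}$, and $\pm 1$ are the fixed points of $z\mapsto 1/z$ on the circle), so $\ker D^+_z$ and $\coker D^+_z$ are quaternionic vector spaces and jump by at least $2$ complex dimensions. At such a point the constrained cokernel that the metric variations must fill is $3$ real dimensions rather than $1$, and your unique-continuation heuristic for the codimension-one case does not address this. Handling the quaternionic locus is exactly the content of Maier's theorem~\cite{Maier} (cited by the paper in the $3$-manifold setting), and any complete proof must either invoke it or reprove it; you mention $j$ only at the very end, to conclude evenness of the index, not to repair the transversality argument. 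More broadly, you describe what would happen \emph{if} no metric variation paired nontrivially against $\eta$, but you do not supply the Clifford-algebra computation that shows this situation cannot occur, so the ``hard part'' is named rather than proved. A minor point: your appeal to Theorem~\ref{T:mero} for discreteness of $\Ss(g,0)$ is not licensed as stated, since that theorem is proved for pairs lying on a regular path, which $(g,0)$ need not be a priori; fortunately the Sard--Smale argument you run does not actually require this, so that step can simply be deleted.
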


Note that a choice of metric $g$ as in the above theorem only guarantees 
(via Proposition \ref{P:gen}) that $\M(X,g,0)$ has no reducibles but not 
that it is regular. A further perturbation $\beta \in \P$ may be needed 
to ensure its regularity. If that perturbation is small enough, it will 
not create any reducibles in $\M(X,g,\beta)$. Moreover, according to 
Theorem~\ref{T:sf}, we will have 
\[
\ind_{\,\C} D^+ (Z_+,g,\beta) = \ind_{\,\C} D^+ (Z_+,g),
\]
meaning that the index $\ind_{\,\C} D^+ (Z_+,g,\beta)$ will be even 
despite the fact that $D^+ (Z_+,g,\beta)$ may no longer be quaternionic 
linear. 

\begin{corollary}
The index $\ind_{\,\C} D^+ (Z_+,g,\beta)$ is even for a generic metric $g$ 
and a generic sufficiently small perturbation $\beta \in \P$. 
\end{corollary}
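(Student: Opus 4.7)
My plan is to combine the quaternionic linearity of the unperturbed Dirac operator with the continuity of the Fredholm index under small perturbations, thereby bypassing the spectral flow machinery altogether.

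First I would fix a generic metric $g$ on $X$ as supplied by the preceding theorem, so that $D^+(Z_+,g): L^2_1\,(Z_+,S^+) \to L^2\,(Z_+,S^-)$ is Fredholm. Because $Z_+$ is spin and the spinor bundles $S^{\pm}$ carry a parallel quaternionic (charge-conjugation) structure with which the Dirac operator commutes, $D^+(Z_+,g)$ is quaternionic linear. Its kernel and cokernel are then quaternionic vector spaces, so their complex dimensions are even, and consequently $\ind_{\,\C} D^+(Z_+,g)$ is even.

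Next I would invoke the openness of the set of Fredholm operators in the operator-norm topology together with local constancy of the Fredholm index. The assignment $\beta \mapsto D^+(Z_+,g,\beta) = D^+(Z_+,g) + \beta$, with $\beta \in \P$ extended over $Z_+$ in some fixed bounded manner, is continuous (in fact affine linear) from $\P$ into the bounded operators $L^2_1\,(Z_+,S^+) \to L^2\,(Z_+,S^-)$. It follows that there exists $\ep > 0$ such that whenever $\|\beta\| < \ep$, the perturbed operator $D^+(Z_+,g,\beta)$ is again Fredholm and
\[
\ind_{\,\C} D^+(Z_+,g,\beta)\; =\; \ind_{\,\C} D^+(Z_+,g),
\]
which is even. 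To finish, Proposition~\ref{P:reg1} applied to the fixed metric $g$ shows that regular perturbations form a residual, hence dense, subset of $\P$; I would pick a regular $\beta$ with $\|\beta\| < \ep$, producing the desired regular pair $(g,\beta)$ for which $\ind_{\,\C} D^+(Z_+,g,\beta)$ is even.

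The delicate point that motivates this route, rather than a direct spectral flow calculation, is that one cannot compare $(g,\beta)$ with $(g,0)$ via Theorem~\ref{T:sf}: at $\beta = 0$ the moduli space $\M(X,g,0)$ is not regular, since it contains the entire circle of flat reducibles parametrized by $H^1(X;i\R)/H^1(X;i\Z)$. Circumventing this via the openness-and-continuity argument above, rather than extending the spectral flow formula to non-regular endpoints, is the main simplification of the proof.
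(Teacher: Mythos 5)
Your main argument is correct and is a genuine simplification of the paper's. The paper reaches the equality $\ind_{\,\C} D^+(Z_+,g,\beta) = \ind_{\,\C} D^+(Z_+,g)$ by invoking the spectral flow formula (Theorem~\ref{T:sf}), arguing that a small enough $\beta$ introduces no crossings of the spectral curves with the unit cylinder; you instead get the same equality directly from the openness of the Fredholm condition and local constancy of the index. Your route has the advantage of not leaning on Theorem~\ref{T:sf} at an endpoint where the hypothesis of that theorem (regularity of $\M(X,g,0)$) is, strictly speaking, not guaranteed by the generic-metric theorem — the latter only gives $\M^0(X,g,0)=\emptyset$. You do need, and correctly implicitly use, that Clifford multiplication by the extension of $\beta$ over $Z_+$ is bounded $L^2_1 \to L^2$ with operator norm controlled by a Sobolev norm of $\beta$ on the compact manifold $X$; since $k\ge 3$ gives $L^2_k \hookrightarrow C^0$, this is fine, and the affine map $\beta\mapsto D^+(Z_+,g,\beta)$ is continuous into $\mathcal{B}(L^2_1, L^2)$.

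One inaccuracy in your closing paragraph: the obstruction to applying Theorem~\ref{T:sf} at $\beta=0$ is \emph{not} that $\M(X,g,0)$ contains a circle of reducibles. In the blown-up picture of Kronheimer--Mrowka used throughout the paper, the reducible locus $\M^0(X,g,0)$ consists of triples $(A,0,\phi)$ with $\|\phi\|_{L^2}=1$ and $\phi\in\ker D^+_A(X,g)$; for a generic metric $g$ this kernel vanishes for every $A$ on the circle $H^1(X;i\R)/H^1(X;i\Z)$, so $\M^0(X,g,0)$ is empty. What may fail at $\beta=0$ is transversality of the cut-out equations on the irreducible part of the moduli space, which is the actual reason the pair $(g,0)$ is not known to be regular. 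Your continuity argument is agnostic to this distinction, so the conclusion stands, but the stated motivation should be corrected.
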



\subsection{$J$--action} 
The quaternionic structures on $S^{\pm}$ lead to a natural $\Z/4$--action 
on the triples $(A,s,\phi)$ given by $J(A,s,\phi) = (-A,s,j\phi)$.  It is
free because of the condition $\|\phi\|_{L^2}\,= 1$, and it descends to a 
free involution $J: \tmZ \to \tmZ$.   The following result is 
straightforward, once we observe that $\tau(j\phi) = - \tau (\phi)$ in 
\eqref{E:sw}.

\begin{lemma}
The map $\swz: \tmZ \to \Omega^2_+ (X,i\R)$ is equivariant with
respect to $J$ in that the following diagram commutes

\[
\begin{CD}
\tmZ @> \swz >> \Omega^2_+ (X,i\R) \\
@V J VV @VV -1 V \\
\tmZ @> \swz >> \Omega^2_+ (X,i\R)
\end{CD}
\]
\end{lemma}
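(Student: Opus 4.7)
The plan is to verify the commutativity of the diagram by a direct pointwise computation on a triple $(A,s,\phi)$, using only two ingredients: the behavior of the curvature under $A\mapsto -A$, and the behavior of the quadratic map $\tau$ under $\phi\mapsto j\phi$. Since the statement is a pointwise identity in the target $\Omega^2_+(X,i\R)$, there is nothing to check about $J$ beyond what has already been recorded (it sends $\tmZ$ to itself, which follows from the quaternionic structure of the Dirac operator).

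First I would observe that because $X$ is spin, the spinor bundles $S^\pm$ carry the trivial determinant line bundle, so connections $A\in\Omega^1(X,i\R)$ have curvature $F_A = dA$ (or twice this, depending on convention) which is linear in $A$. Hence $F_{-A} = -F_A$, and consequently $F^+_{-A} = -F^+_A$. This half of the argument is routine.

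The substantive half is the identity $\tau(j\phi) = -\tau(\phi)$, which is the observation flagged in the hint. I would verify this by unpacking $\tau$: it is defined as the traceless Hermitian endomorphism $\phi\otimes\phi^* - \tfrac12|\phi|^2\,\Id$ of $S^+$, then identified with a self-dual imaginary 2-form via the natural isomorphism $i\mathfrak{su}(S^+)\cong\Lambda^2_+\otimes i\R$ given by Clifford multiplication. Since $j$ is an antilinear isometry of $S^+$, one computes
\[
(j\phi)\otimes(j\phi)^* \;=\; j\,(\phi\otimes\phi^*)\,j^{-1},
\]
and conjugation by $j$ on the Lie algebra $\mathfrak{su}(S^+)$ acts as $-1$ on the subspace corresponding to $\Lambda^2_+$ (because $j$ anticommutes with multiplication by $i$, which is the complex structure used to make the identification). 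This gives $\tau(j\phi) = -\tau(\phi)$.

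Combining the two identities yields
\[
\swz(J(A,s,\phi)) \;=\; \swz(-A,s,j\phi) \;=\; F^+_{-A} - s^2\tau(j\phi) \;=\; -F^+_A + s^2\tau(\phi) \;=\; -\swz(A,s,\phi),
\]
which is precisely the commutativity of the diagram. The only step requiring real care is the second one, namely the sign computation for $\tau$; everything else is formal.
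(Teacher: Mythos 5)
Your proof is correct and follows essentially the route the paper indicates: the paper states only that the lemma is ``straightforward, once we observe that $\tau(j\phi) = -\tau(\phi)$,'' and you supply exactly this observation plus the linearity of $F^+_A$ in $A$. One small imprecision worth noting: conjugation by $j$ acts as $+1$ on $\mathfrak{su}(S^+)\cong\Lambda^2_+\otimes\R$ (since $j$ commutes with Clifford multiplication by real forms) and therefore as $-1$ on $i\,\mathfrak{su}(S^+)\cong\Lambda^2_+\otimes i\R$, which is where $\tau(\phi)$ actually lives; your parenthetical reason --- that $j$ anticommutes with $i$ --- is the correct one, but the sentence mislabels the space on which conjugation by $j$ reverses sign.
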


\medskip
In particular, we see that $J$ does not act on $\M (X,g,\beta)$ unless 
$\beta = 0$. Therefore, if we want to show that $\#\,\M (X,g,\beta)$ is 
even for a generic metric, we will need more elaborate perturbations. 
In what follows, we adopt the approach of~\cite{MSz}. 

Let us view $\swz$ as a section of the trivial bundle $\E = \tmZ
\times \Omega^2_+ (X,i\R) \to \tmZ$ given by $\swz\,([A,s,\phi]) = 
([A,s,\phi], F^+_A - s^2\,\tau (\phi))$. It is equivariant, meaning that 
the following diagram commutes

\[
\begin{CD}
\tmZ @> \swz >> \E \\
@V J VV @VV \sigma V \\
\tmZ @> \swz >> \E
\end{CD}
\]

\medskip\noindent
Here, $\sigma$ acts as $J$ on the base $\tmZ$ and as $-1$ on each 
of the fibers of $\E$. Taking quotient by the free action of $J$ and 
$\sigma$, we obtain a section 
\[
\swz': \tmZ'\; \to\; \E'
\]
of the bundle $\E' = \tmZ\,\times_{\sigma}\Omega^2_+\,(X,i\R)$ over 
the Hilbert manifold $\tmZ' = \tmZ/J$. 

Adding a small generic section $\zeta': \tmZ'\to \E'$ makes $\swz'
+ \zeta'$ transversal to the zero section of $\E'$, and also makes its 
lift $\swz + \zeta: \tmZ \to \E$ an equivariant section transversal 
to the zero section of $\E$. In fact, we can choose $\zeta = d^+ \gamma$ 
for a map $\gamma: \tmZ \to \P$ which is equivariant in that $\gamma 
([-A,s,j\phi]) = - \gamma ([A,s,\phi])$.  The perturbations $\beta \in \P$ 
that we used before can then be viewed as constant maps $\beta: \tmZ 
\to \P$. The perturbed Seiberg--Witten moduli space 
\[
\M (X,g,\gamma) = (\swz\, +\, d^+ \gamma)^{-1}(0)
\] 
is a compact regular manifold of dimension zero acted upon freely by the 
involution $J$, cf.~\cite{MSz}. In particular, $\M (X,g,\gamma)$ consists
of an even number of points. 

\begin{proposition}
For a generic metric $g$ and a sufficiently small gene\-ric perturbations 
$\beta$ and $\gamma$ as above, the moduli spaces $\M (X,g,\beta)$ and 
$\M (X,g,\gamma)$ are in bijective correspondence. In particular, the
number $\#\,\M (X,g,\beta)$ is even.
\end{proposition}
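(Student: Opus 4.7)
The plan is to build a cobordism between $\M(X,g,\beta)$ and $\M(X,g,\gamma)$ through a 1-parameter family of perturbations that avoids reducibles, giving an equality (indeed a bijection) of signed counts; evenness of $\#\M(X,g,\gamma)$ then follows from the free $J$-action.

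First, I would choose a generic metric $g$ using the preceding theorem, so that $D^+(Z_+,g)$ is Fredholm; equivalently, by the discussion at the end of Section~\ref{S:reds} and Proposition~\ref{P:kernels}, no operator in the family \eqref{E:Dz} with $|z|=1$ has nontrivial kernel, so the unperturbed space $\M^0(X,g,0)$ is empty. By continuity of the family \eqref{E:Dz} in the perturbation and compactness of the circle $|z|=1$, there is $\eta>0$ such that whenever $\|\beta\|<\eta$ and $\sup_{\tmZ}\|\gamma\|<\eta$, the sets $\M^0(X,g,\beta)$ and $\M^0(X,g,\gamma)$ remain empty. Proposition~\ref{P:reg1} (and its equivariant analogue from \cite{MSz}) then supplies generic $\beta$ and $\gamma$ of norm less than $\eta$ for which the respective moduli spaces are regular.

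Next, interpolate between the two perturbations by considering the path of (not necessarily equivariant) sections $\sigma_t = \swz - (1-t)\,d^+\beta + t\,d^+\gamma$ of $\E\to\tmZ$, $t\in[0,1]$, whose endpoints cut out $\M(X,g,\beta)$ and $\M(X,g,\gamma)$. A parameterized transversality argument, entirely analogous to the one used to prove Theorem~\ref{T:reg} but with perturbations drawn from the larger space of maps $\tmZ\to\P$, can be applied after a small generic homotopy of the path to make $\sigma_t$ transverse to the zero section of $\E$ and to its boundary. Since both endpoints stay within the $\eta$-ball and one can arrange the homotopy to do the same, no reducibles appear along the path, so $\M_I$ is a compact oriented $1$-manifold with boundary $\M(X,g,\beta)\sqcup\M(X,g,\gamma)$ and empty $\M^0_I$. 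Decomposing this $1$-manifold into arcs and circles pairs the boundary points in bijective fashion and forces
\[
\#\M(X,g,\beta)=\#\M(X,g,\gamma).
\]

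Finally, because $\gamma$ is equivariant, the section $\swz+d^+\gamma$ is $\sigma$-equivariant in the sense of the diagram in Section~\ref{S:rohlin}, so $J$ acts freely on $\M(X,g,\gamma)$. The standard orientation argument from \cite{MSz} shows that $J$ acts with sign $\pm 1$ consistently across the orbits, whence $\#\M(X,g,\gamma)\equiv 0 \pmod 2$, and therefore $\#\M(X,g,\beta)$ is even. The main technical obstacle is the transversality step: one must verify that the enlarged class of perturbations $\gamma_t:\tmZ\to\P$ (which is neither the constant class of Section~\ref{S:regularity} nor the equivariant class of \cite{MSz}) is still rich enough to make the parameterized zero locus regular, while at the same time the a priori bound $\|\gamma_t\|<\eta$ survives the generic homotopy so that no reducibles sneak in along the path.
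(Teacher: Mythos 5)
Your proposal takes essentially the same route as the paper's proof: pick a generic metric so that $\M^0(X,g,0)$ is empty, invoke continuity/compactness to find a small ball of perturbations (in the space of maps $\tmZ\to\P$, which contains both constants and equivariant maps) inside which $\M^0$ stays empty, connect $\beta$ to $\gamma$ by a straight-line path inside this ball, perturb the path rel endpoints to achieve transversality while remaining in the ball, and read off an oriented cobordism between $\M(X,g,\beta)$ and $\M(X,g,\gamma)$; evenness then comes from the free $J$-action on $\M(X,g,\gamma)$. The explicit flag you raise about parameterized transversality in the enlarged perturbation class is a reasonable thing to worry about, but it is exactly the point the paper handles the same way (a generic homotopy of the path among maps $\tmZ\to\P$ preserves both transversality and the a priori smallness bound), so there is no genuine gap relative to the paper's argument.
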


\begin{proof}
In the Hilbert space of all perturbations $\tmZ \to \P$, the zero 
perturbation corresponds to the moduli space $\M (X,g,0)$ with empty 
$\M^{0}(X,g,0)$ by our choice of generic metric $g$. By continuity, there is 
a small ball in this space centered at 0 such that all the perturbations 
$\eta$ in it have the property that $\M^{0}(X,g,\eta)$ is empty. Choose
$\beta$ and $\gamma$ sufficiently small so that they belong to this ball,  
and connect $\beta$ to $\gamma$ by the path $\eta_t = (1 - t)\beta + t
\gamma$, $0 \le t \le 1$. Along this path, all $\M (X,g,\eta_t)$ have empty 
$\M^{0}(X,g,\eta_t)$ but are not necessarily regular, except at the endpoints.
Perturb this path a little rel its endpoints into $\eta'_t$ so that $\eta'_t$ 
stays inside the ball, while keeping all $\M^{0} (X,g,\eta'_t)$ empty and 
making $\M (X,g,\eta'_t)$ regular. The parameterized moduli space 
\[
\bigcup_{t \in [0,1]} \; \{t\}\times \M(X,g,\eta'_t)
\]
provides an oriented cobordism between $\M (X,g,\beta)$ and 
$\M (X,g,\gamma)$, and the result follows. 
\end{proof}


\section{Negative-definite manifolds with $b_1 = 1$}\label{S:neg-def}
The definition of the invariant $\lambda_{\,\SW}(X)$ extends to a more general 
topological setup, encountered~\cite{okonek-teleman:b+0,teleman:definite} in 
the study of non-K\"ahler complex surfaces, particularly those of type 
$\rm{VII}_0$. We will present two sets of hypotheses that yield a 
well-defined invariant; unfortunately, both are stronger than the minimal 
hypothesis that $H_1 (X) = \Z$ and $b^2_{+} (X) = 0$ one would hope for.

In the first situation we assume that $X$ is a smooth oriented closed 
$4$-manifold and that the following conditions hold:
\begin{equation}\label{E:Z-split}
\begin{split}
 &H_1(X;\Z) = \Z,\;  b^2_{+}(X) =0, \; b^2_{-}(X) = n,\; \mathrm{and}\\ 
 &H_3 (X)\ \textrm{is generated by an integral homology sphere}\ 
Y\subset X. 
\end{split}
\end{equation}

\noindent
In the second situation we assume that
\begin{equation}\label{E:bminus=1}
H_1(X;\Z) = \Z,\;  b^2_{+}(X) =0,\; \mathrm{and}\;\; b^2_{-}(X) = 1
\end{equation}
but no longer make any special hypothesis about the topology of a $3$-manifold 
$Y$ generating $H_3(X)$.

By Donaldson's theorem, the intersection form of $X$ is diagonalizable over 
the integers. Let $\spincs_X$ be a $\spinc$ structure with $c_1(\spincs_X)$
dual to the sum of the vectors in a diagonalizing basis, so that  
\begin{equation}\label{E:spinc}
c_1(\spincs_X)^2 = \sign(X) = -n.
\end{equation}
Note that $H^2(X;\Z)$ is torsion-free, hence specifying $c_1(\spincs_X)$ actually determines $\spincs_X$.  Any two bases diagonalizing $H_2(X;\Z)$ differ by permutation and change of signs of the basis elements, so there are in principle $2^{n-1}$ choices of \spinc structure to consider when defining our invariant. (The `charge conjugation' of the Seiberg-Witten equations~\cite[Section 6.8]{morgan:swbook} implies that the invariant is preserved when one reverses all of the signs of the basis elements.) If $X$ is an $n$-fold blowup of a homology $S^1 \times S^3$, then these all yield the same invariant up to sign, but we do not know if this holds in general.

For any choice of \spinc structure $\spincs_X$ as above and any choice of 
metric $g$ on $X$, the index of the \spinc Dirac operator $D^+(X,\spincs_X,g)$ 
is given by 
\[
\left(c_1(\spincs_X)^2 - \sign(X)\right)/8 = 0.
\]
This implies that the Seiberg--Witten moduli space corresponding to 
$\spincs_X$ has formal dimension $0$. An analogue of Proposition \ref{P:reg1} 
then shows that, for a generic perturbation $\beta \in \P$, the perturbed 
Seiberg--Witten moduli space $\M(X,\spincs_X,g,\beta)$ is a compact 
zero-dimensional manifold with no reducibles. Choose a homology orientation 
on $X$, that is, a generator $1\in H^1 (X;\Z)$, then $\M(X,\spincs_X,g,\beta)$ 
is canonically oriented, and we denote by $\#\,\M(X,\spincs_X,g,\beta) $ the 
signed count of its points. 

As before, this count can change as the metric and perturbation vary, so we 
need to define a suitable index-theoretic correction term $w(X,\spincs_X,g,
\beta)$. To do this, choose a $3$-manifold $Y\subset X$ representing $1 \in 
H_3 (X;\Z)$, and let $\spincs_Y$ be the restriction of $\spincs_X$ to $Y$.  
If $b^2_{-}(X) >1$, then we assume as in \eqref{E:Z-split} that $Y$ is an 
integral homology sphere. Also choose a smooth oriented compact \spinc 
manifold $Z$ with $\partial(Z,\spincs_Z) = 
(Y,\spincs_Y)$. Note that a simply-connected $Z$ with $\p Z = Y$ will have a 
\spinc structure extending $\spincs_Y$, so there are many choices for $Z$. 
As in Section \ref{S:epmflds}, form the end-periodic \spinc manifold $Z_+ = 
Z\,\cup\,\tilde X_+$ and extend a regular pair $(g,\beta)$ to an end-periodic 
pair $(g,\beta)$ on $Z_+$. Use the \spinc structure $\spincs_Z$ to lift 
$\spincs_X$ to an end-periodic \spinc structure on $Z_+$, called
$\spincs_{Z_+}$. As in Theorem \ref{T:taubes2}, the $L^2$--closure of the 
operator $D^+(Z_+,\spincs_{Z_+},g) + \beta$ will be Fredholm. Define the 
correction term to be
\[
w\,(X,\spincs_X,g,\beta)\, =\, \ind_{\,\C} (D^+ (Z_+,\spincs_{Z_+},g) + \beta)\, 
+ \frac 1 8\left(\sign (Z) - c_1(\spincs_Z)^2\right).
\]
The following proposition is an analogue of Proposition \ref{P:3.2}. 

\begin{proposition} \label{P:indep}
Given either \eqref{E:Z-split} or \eqref{E:bminus=1}, the correction term 
$w\,(X,\spincs_X,g,\beta)$ is independent of the choice of $Y \subset X$ 
and $Z$, and of the way $g$, $\beta$, and the \spinc structure $\spincs_X$ 
are extended over $Z_+$.  
\end{proposition}

Given Proposition~\ref{P:indep}, the argument that proves Theorem \ref{T:main} 
can be used essentially word for word to prove the following result.

\begin{theorem}
Let $X$ be a smooth oriented homology oriented $4$-manifold satisfying 
conditions~\eqref{E:Z-split} or~\eqref{E:bminus=1}, and $\spincs_X$ a \spinc
structure satisfying \eqref{E:spinc}. Then $\lambda_{\,\SW}(X,\spincs_X) = 
\#\,\M(X,\spincs_X,g,\beta) - w(X,\spincs_X,g,\beta)$ is independent of the 
choice of metric $g$ and generic perturbation $\beta$.
\end{theorem}

\begin{proof}[Proof of Proposition~\ref{P:indep} assuming \eqref{E:Z-split}]
Given two choices, $Z$ and $Z'$ with $\p Z = \p Z' = Y$, we use the excision 
principle and the index theorem to obtain
\begin{align*}\notag
\ind_{\,\C} D^+ (Z'_+,\spincs_{Z'_+}, & g,\beta) - 
\ind_{\,\C} D^+ (Z_+,\spincs_{Z_+},g,\beta)& \\ 
&= \ind_{\,\C} D^+ (- Z \cup Z',\spincs_Z \cup \spincs_{Z'})&\\ 
&= \frac 1 8\left(c_1(\spincs_Z \cup \spincs_{Z'})^2 - \sign (-Z \cup Z')\right) &\\
&=\frac 1 8\left( \sign (Z) -c_1(\spincs_Z)^2\right) -\frac 1 8 \left(\sign (Z') - c_1(\spincs_{Z'})^2\right),&
\end{align*} 
which proves that $w(X,\spincs_X,g,\beta)$ is independent of the choices of $Z$ and the extensions.

Let $Y$ and $Y'$ be two integral homology spheres carrying the generator of $H_3 (X;\Z)$. Suppose first that $Y'$ is disjoint from $Y$, and write $X = U\, \cup\, U'$ with $-Y\,\cup\, Y' = \partial U$.  Let $\spincs_U$ and $\spincs_{U'}$ denote the restriction of $\spincs_X$ to $U$ and $U'$. To use the excision principle as in the proof of Proposition~\ref{P:3.2}, we need to show that 
\begin{equation}\label{E:square}
c_1(\spincs_U)^2 = \sign(U)\quad\text{and}\quad c_1(\spincs_{U'})^2 = \sign(U').
\end{equation}
We know that $H^2(X) = H^2(U)\,\oplus\, H^2(U')$, and that the intersection form on each summand is diagonalizable.  Therefore, $-c_1(\spincs_U)^2 \ge b_2(U)$ and $-c_1(\spincs_{U'})^2 \ge b_2(U')$.  On the other hand, $ c_1(\spincs_X)^2 = c_1(\spincs_U)^2  + c_1(\spincs_{U'})^2 $, and \eqref{E:square} follows.  

If $Y$ and $Y'$ are not disjoint, choose lifts of $Y$ and $Y'$ to $\tilde X$. Translate $Y'$ by a sufficiently high prime power $p$ of the covering translation to make it disjoint from $Y$. Then $Y$ and $Y'$ cobound a submanifold $U$ of $\tilde X$ which embeds into the cyclic $p$-fold cover $\hat{X} \to X$. Since $p$ is prime, it follows from Smith theory (cf.~\cite{gilmer:thesis}) that 
$b_1(\hat{X}) = 1$. Both the signature and Euler characteristic multiply 
by $p$ when passing to a cyclic $p$-fold cover hence $\chi(\hat{X}) = pn$ 
and $\sign(\hat{X}) = -pn$ and, in particular, $\hat{X}$ is negative 
definite. The argument above (with $H^2$ replaced by $H^2/\mathrm{torsion}$) 
now shows that \eqref{E:square} holds.
\end{proof}

\begin{proof}[Proof of Proposition~\ref{P:indep} assuming \eqref{E:bminus=1}]
As in the previous proof, we start with disjoint manifolds $Y$ and $Y'$, separating $X$ into $U$ and $U'$. When $Y$ and $Y'$ have nontrivial homology, the relation between the intersection form of $X$ and those of $U$ and $U'$ is best understood via Novikov additivity. Following the proof of~\cite[Proposition 7.1]{atiyah-singer:III}, define $A \subset H^2(X;\Q)$ as the image of the composition
\[
H^2(U, \partial U;\Q) \to H^2(X,U';\Q)\to H^2(X;\Q),
\] 
and define $A' \subset H^2(X;\Q)$ similarly. The subspaces $A$ and $A'$ are mutual annihilators for the intersection form on $X$. Since $H^2(X;\Q) = \Q$, at least one of these subspaces must vanish; in particular, $A \cap A' = 0$. It then follows as in \cite{atiyah-singer:III} that 
\begin{multline}\notag
H^2(X;\Q) \ = \ \im\,[H^2(U,\partial U;\Q) \to H^2(U;\Q)] \\ \oplus\; 
\im\,[H^2(U',\partial U';\Q) \to H^2(U';\Q)],
\end{multline}
where exactly one of the summands (say the former) is non-zero. The projections onto the summands are given by restriction, hence $c_1(\spincs_U) \in \im\,[H^2(U,\partial U;\Q) \to H^2(U;\Q)]$, and similarly for $c_1(\spincs_{U'})$. The latter of course vanishes because it lives in the trivial vector space. As for the former, $c_1(\spincs_U)^2 \in \Q$ is well-defined and, as claimed,  
\[
c_1(\spincs_{U})^2 = c_1(\spincs_{X})^2 = -1 = \sign(U).
\]

If $Y$ and $Y'$ intersect, then the argument above which allows to separate them by passing to a covering space will not work, because the condition $b_2 =1$ need not hold in the covering space.   Instead, we appeal to the following principle (cf.~\cite{lickorish:knot-theory,rice:s-equivalence}):   There exist a sequence of connected submanifolds $Y = Y_0,\ldots,Y_n = Y'$ such that each $Y_i$ carries the generator of $H_3(X)$, and $Y_{i+1}$ is disjoint from $Y_i$.   Thus, the invariants defined by cutting along each $Y_i$ in turn are equal, and the independence is proved. 
\end{proof}

Note that the argument at the end of the second proof would not work in the situation when $b_2 >1$, because there is no guarantee that the $Y_i$ would be homology spheres. 

\section{Examples}\label{S:examples}
In this section,  we discuss the invariant $\lambda_{\,\SW}(X)$ for mapping 
tori $X$ of finite order diffeomorphisms $\tau: Y \to Y$. We succeed in 
calculating $\lambda_{\,\SW} (X)$ explicitly for Seifert fibered homology 
spheres $Y$ and a certain natural $\tau$ associated with $Y$ as a link of 
a singularity.


\subsection{Mapping tori}
Let $Y$ be a closed oriented 3-manifold and $\tau: Y \to Y$ a finite order 
orientation preserving diffeomorphism such that the mapping torus $X = 
([0,1]\times Y)\,/\,(0,x) \sim (1,\tau(x))$ is a homology $S^1 \times S^3$.
Suppose that $Y$ is equipped with a Riemannian metric $g$ such that $\tau$
is an isometry; then the product metric on $[0,1] \times Y$ gives rise to a 
natural metric on $X$ which we call again $g$.

\begin{proposition}
The infinite cyclic cover $\tilde X$ of $X$ is isometric to $\R \times Y$.
\end{proposition}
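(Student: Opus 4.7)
The plan is to realize the infinite cyclic cover explicitly as a product, and then verify that the lifted metric is the product metric using only that $\tau$ is an isometry. Since $\tau$ has finite order, the mapping torus $X$ is a smooth fiber bundle $X\to S^1$ with fiber $Y$, and its infinite cyclic cover is the pullback of $X$ along the universal cover $\R\to S^1$. Unwinding the identification $(0,x)\sim(1,\tau(x))$ suggests the explicit map
\[
\psi: \R\times Y \longrightarrow X, \qquad \psi(t,y) \;=\; [\,t-n,\; \tau^{-n}(y)\,] \quad \text{for } t\in[n,n+1],
\]
and this is what I would take as the starting point of the proof.

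The first step is to check that $\psi$ is well-defined and smooth across each integer value $t=n$. At $t=n^+$ the formula gives $[0,\tau^{-n}(y)]$, while at $t=n^-$ it gives $[1,\tau^{-(n-1)}(y)]$, and the two agree in $X$ because $(0,x)\sim(1,\tau(x))$ with $x=\tau^{-n}(y)$. To see smoothness, I would use the standard collar chart of $X$ near the fiber $Y\subset X$, namely the chart $(-\epsilon,\epsilon)\times Y\to X$ defined piecewise by $(s,z)\mapsto[s,z]$ for $s\ge0$ and $(s,z)\mapsto[1+s,\tau(z)]$ for $s<0$. In this chart, $\psi$ near $(n,y_0)$ reads $(t,y)\mapsto(t-n,\tau^{-n}(y))$ on both sides of $t=n$, and hence is smooth and a local diffeomorphism. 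The second step is to observe that $\psi$ is a covering map whose group of deck transformations is generated by $(t,y)\mapsto(t+1,\tau(y))$; since this group is infinite cyclic and the quotient is $X$, this $\psi$ realizes the infinite cyclic cover $\tilde X\to X$.

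The third step is the metric computation. The metric on $X$ in the collar chart above is the product metric $ds^2+g_Y$ on both halves $s\ge0$ and $s<0$: for $s\ge0$ this is immediate, and for $s<0$ the change of coordinates $(s',z')=(1+s,\tau(z))$ takes the chart onto $[0,1)\times Y$ where the metric is $ds'^2+g_Y$, and $(\tau^{-1})^*g_Y=g_Y$ by hypothesis. Pulling back along $\psi$, which in this chart is $(t,y)\mapsto(t-n,\tau^{-n}(y))$, yields $dt^2+(\tau^{-n})^*g_Y=dt^2+g_Y$, again because $\tau$ (hence every power of $\tau$) is an isometry. Since every point of $\R\times Y$ lies in such a chart, $\psi^*g$ is the product metric globally, which is exactly the claim.

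The only real obstacle is the smoothness and metric check at the integer values $t=n$, and this is resolved by choosing the collar chart on $X$ that straddles the identified fiber: in that chart both the map $\psi$ and the metric lose their piecewise appearance simultaneously, and the isometry property of $\tau$ does the rest.
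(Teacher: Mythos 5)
Your proof is correct, and the key mathematical point is the same as the paper's: the map $(t,x)\mapsto(t+1,\tau(x))$ is an isometry of the product metric on $\R\times Y$ precisely because $\tau$ is an isometry of $(Y,g)$. Where you differ is in the amount of work you do to see it. You start from the description $X=([0,1]\times Y)/(0,x)\sim(1,\tau(x))$, build the covering map $\psi:\R\times Y\to X$ by an explicit piecewise formula, and then must verify smoothness and the local-isometry property across each seam $t=n$ using a collar chart. The paper sidesteps all of that by rewriting $X$ at the outset as $(\R\times Y)/\Z$, where $\Z$ acts by $(t,x)\mapsto(t+1,\tau(x))$; this is the standard smooth description of a mapping torus, so there is no seam and nothing piecewise to check. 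Since the $\Z$-action preserves the product metric, that metric descends to a metric on $X$, and by construction the descended metric is exactly the natural metric $g$; the quotient map $\R\times Y\to X$ is then tautologically the infinite cyclic cover and a local isometry. Your collar-chart analysis is a correct but avoidable detour --- it is essentially re-deriving the fact that $([0,1]\times Y)/\sim$ and $(\R\times Y)/\Z$ give the same smooth Riemannian manifold, which you could instead take as the starting point.
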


\begin{proof}
The mapping torus $X$ can be written as  $X =  (\mathbb R \times Y)\,/\,
(t,x) \sim (t + 1, \tau(x))$, where the map $(t,x) \to (t + 1, \tau(x))$
is an isometry of the product metric.
\end{proof}

We conclude that a periodic end modeled on the mapping torus of a finite 
order isometry is isometric to a product end. In particular, the Dirac 
operator $D^+ (Z_+,g)$ is isomorphic over the end to $d/dt + D$, where 
$D$ is the self-adjoint Dirac operator on $Y$. According to~\cite{Maier}, 
the operator $D$ is invertible for a generic metric $g$ on $Y$, and hence 
the $L^2$--closure of $D^+ (Z_+,g)$ is Fredholm. Therefore,
\begin{multline}\notag
w (X,g,0) = w (S^1\times Y,g,0) \\ = \ind_{\C} D^+ (Z_+,g) + \frac 1 8\,
\sign (Z) = - \left(\frac 1 2\,\eta_{\Dir} (Y) + \frac 1 8\,\eta_{\Sign} 
(Y)\right)
\end{multline}
by the Atiyah--Patodi--Singer theorem~\cite{aps:I}, where $\eta_{\Dir} (Y)$ 
and $\eta_{\Sign} (Y)$ refer to the $\eta$-invariants of, respectively,
the Dirac operator $D$ and the odd signature operator on $Y$. 

This takes care of the correction term $w(X,g,0)$. The other ingredient in 
calculating $\lambda_{\,\SW} (X)$ is the signed count of points in the moduli 
space $\M(X,g,0)$. Since $X$ admits a fixed point free action of $S^1$ (which
makes it into a circle bundle over the orbifold $Y/\tau$), we can employ 
techniques of~\cite{baldridge} to identify $\M(X,g,0)$, for a generic metric 
$g$, with $\M^{\tau}(Y,g)$, the equivariant Seiberg--Witten moduli space on 
the 3-manifold $Y$. 

Note that using the above approach to computing $\lambda_{\SW} (X)$ for 
mapping tori is in general problematic because of the equivariant
transversality required from the metric $g$. However, this approach 
works in at least a couple of instances. One is when $\tau$ is the 
identity so that $X = S^1 \times Y$ is the product, and the other is 
when $Y$ is a Seifert fibered homology spheres and $\tau$ is a certain 
involution associated with $Y$ as a link of a singularity. These two 
classes of examples will be studied in the rest of this section.


\subsection{The product case}
Let $X = S^1 \times Y$ have the product metric. It follows from Lim 
\cite{lim:sw3} that, for a generic metric $g$ on $Y$, we have 
\begin{equation}\label{E:km}
\lambda_{\,\SW}(S^1 \times Y)\, =\, - \lambda (Y), 
\end{equation}
where $\lambda (Y)$ is the Casson invariant \cite{akbulut-mccarthy}.
This supports the conjecture stated in the introduction because we know 
that $\lambda_{\,\FO} (S^1\times Y) = \lambda (Y)$; 
see~\cite{ruberman-saveliev:mappingtori}.

\begin{remark}
Our orientation convention is that $\lambda (\Sigma(2,3,5)) = -1$. Since
the metric $g$ on $\Sigma(2,3,5)$ has positive scalar curvature, $\M
(\Sigma(2,3,5),g)$ is empty. On the other hand, if we choose $Z$ to 
be the plumbed manifold with the (negative definite) intersection form 
$E_8$ then $\sign (Z)/8 = -1$ and $\ind D^+ (Z_+,g) = 0$ (the latter can
be found in~\cite[Proposition 8]{froyshov:mrl}). This fixes the sign in 
formula~\eqref{E:km}.
\end{remark}


\subsection{Seifert fibered homology spheres}
Given pairwise relatively prime integers $a_1,\ldots,a_n \ge 1$, consider
the Seifert fibered homology sphere $Y = \Sigma (a_1,\ldots,a_n)$. This 
is an integral homology sphere which will be viewed as a link of a 
Brieskorn--Hamm complete intersection singularity with real coefficients. 
It is canonically oriented and admits a fixed point free circle action. 
This action makes $Y$ into an orbifold circle bundle $\pi: Y \to F$, where 
$F$ is the 2--sphere with $n$ singular points of multiplicities $a_1,\ldots,
a_n$. The orbifold Euler characteristic of $F$ is given by the formula
\[
\chi (F) = 2 - \sum \; (1 - 1/a_k).
\]

Let $i\eta$ denote the connection form of the circle bundle and $g_F$ an 
orbifold metric on $F$ with constant curvature. We follow~\cite{MOY} and 
endow $Y$ with the metric $g = \eta^2 + \pi^*(g_F)$ and the connection 
$\mathring\empty\,\nabla$ on $TY$ canonically induced by the Levi--Civita 
connection on $F$. Note that $\mathring\empty\,\nabla$ differs from the 
standard Levi--Civita connection on $Y$ used in the definition of the 
Seiberg--Witten invariants. 

According to~\cite[Section 2.3]{Nic3}, the metric $g$ on $Y$ is generic 
in that $\ker D(Y,g) = 0$, and all irreducible solutions are 
$S^1$--invariant up to gauge transformation. This implies that the moduli 
space $\mathring\empty\M(Y,g)$ of irreducible solutions to the 
Seiberg--Witten equations on $Y$ with respect to the metric $g$ and the 
connection $\mathring\empty\,\nabla$ can be identified via pullback with 
two copies of the space of effective orbifold divisors over $F$ with 
orbifold degree not exceeding $-\chi(F)/2$. More precisely, $\mathring
\empty\M(Y,g)$ contains finitely many components $\mathcal C^+(\ep)$ 
and an equal number of components $\mathcal C^-(\ep)$, both labeled by 
the vectors $\ep = (\ep_1,\ldots,\ep_n)$ such that $0 \le \ep_k < a_k$ and
\[
\sum\; \ep_k/a_k \; \le \; - \chi (F) /2.
\]
The components $\mathcal C^{\pm}(\ep)$ consist of holomorphic, respectively, 
anti-holo\-morphic, vortices on $F$. If $n = 3$ or $n = 4$, each of the 
components $\mathcal C^{\pm}(\ep)$ is just a point.

Regarding the Seiberg--Witten moduli space $\M(Y,g)$ corresponding to the
metric $g$ and the Levi--Civita connection, Nicolaescu~\cite[Theorem 3.1]
{Nic1} showed that there is a natural bijective correspondence between 
$\mathring\empty\M(Y,g)$ and $\M(Y,g)$ for all metrics $g$ on $Y$ as 
above with sufficiently short circle fibers.  

Let $\tau: Y \to Y$ be induced by the complex conjugation on the link $Y$. 
Then $\tau$ is an involution that makes $Y$ into a double branched cover 
of $S^3$ with branch set a Montesinos knot. It commutes with the projection 
$\pi: Y \to F$ and thus defines an anti-holomorphic involution on $F$. In 
particular, it interchanges the holomorphic and anti-holomorphic vortices 
$\mathcal C^{\pm}(\ep)$ on $F$ so that $\mathring\empty\M^{\tau}(Y,g)$ and 
hence $\M^{\tau}(Y,g)$ are empty. Therefore, for the mapping torus $X$ of 
$\tau$,
\[
\lambda_{\,\SW} (X)\; =\; \frac 1 2\;\eta_{\Dir}\,(Y)\; + \; \frac 1 8\;
\eta_{\Sign}\,(Y).
\]
The latter can be calculated explicitly in terms of either integral lattice 
points or Dedekind sums; see~\cite{Nic1}. On the other hand, 
$\lambda_{\,\FO} (X)$ is equal to the equivariant Casson invariant 
$\lambda^{\tau} (Y)$, also known as the $\bar\mu$--invariant of Neumann 
and Siebenmann; see for instance~\cite{collin-saveliev} or~\cite{saveliev}. 
Our conjecture is then equivalent to showing that
\begin{equation}\label{E:barmu}
\frac 1 2\;\eta_{\Dir}\,(Y)\; +\; \frac 1 8\;\eta_{\Sign}\,(Y)\, =\, 
-\, \bar\mu (Y)
\end{equation}
for all Seifert fibered homology spheres $Y = \Sigma(a_1,\ldots,a_n)$. We 
succeeded in checking that~\eqref{E:barmu} is true by a direct calculation 
with Dedekind sums; a complete proof can be found 
in~\cite{ruberman-saveliev:mubar}.

\bigskip 


\appendix\label{A:vertical}
\section{Existence of special paths}
In this appendix, we verify that any regular path of metrics and perturbations can be homotoped rel endpoints to a path $(g_I,\beta_I)$ so that near any points $t$ for which $\M^0 (X,g_t,\beta_t)$ is non-empty, the metric is constant.  This was stated as Theorem~\ref{T:special} in Section~\ref{S:regular}.

Let us write $\met$ for the space of Riemannian metrics on $X$, with the 
$C^k$ topology for some sufficiently large $k$.  As in 
Section~\ref{S:regularity}, form the space $\tmZ_{\met} \subset \met 
\times \widetilde\B$ consisting of quadruples $(g,A,s,\phi)$ with $D^+_A(X,g)\,
(\phi) = 0$. The proof of Lemma 27.1.1 in~\cite{KM} that shows that $\tmZ$ 
is a Hilbert submanifold of $\widetilde \B$ for any metric $g$, goes through 
with little change to show that $\tmZ_{\met}$ is a Hilbert submanifold of 
$\met \times \widetilde\B$. The equation $s = 0$ defines a codimension one 
submanifold $\p \mZ_{\met}\, \subset \,\tmZ_{\met}$. The projection $(g,A,s,
\phi)\to g$ induces submersions $\tmZ_{\met} \to \met$ and $\p \mZ_{\met} 
\to \met$.

Let $\Omega_{\met}$ be the subspace of $\met \times\;\Omega^2 (X,i\R)$ 
comprised of the pairs $(g,\omega)$ such that $\omega$ is self-dual with 
respect to the metric $g$. Projection onto the first factor makes it into 
a Hilbert bundle $\pi: \Omega_{\met} \to \met$. As such, it is isomorphic 
to the parameterized perturbation space $\P_{\met}$, which is defined as 
the Hilbert bundle over $\met$ comprised of the pairs $(g,\beta) \in \met 
\times\,\Omega^1 (X,i\R)$ such that $d^*\beta = 0$ and $\beta \in 
\H^1(X,i\R)^{\perp}$ with respect to $g$. The bundle isomorphism sends 
$(g,\beta) \in \P_{\met}$ to $(g,d^{+_g}\beta) \in \Omega_{\met}$. 

The above maps can be included into the commutative diagram
\begin{equation}\label{E:cd2}
\begin{CD}
\tmZ_{\met} @ >\swz_{\met} >> \Omega_{\met} \\
@VVV @ VV\pi V \\
\met @> 1 >> \met
\end{CD}
\end{equation}
where the map $\swz_{\met}$ is given by $\swz_{\met}\,(g,A,s,\phi) = 
(g, F^{+_g}_A - s^2\,\tau_g (\phi))$. The restriction of $\swz_{\met}$ 
to $\p\mZ_{\met}$ will be denoted $\p\swz_{\met}$. Note that the 
commutative diagram~\eqref{E:cd1} is just the pull back of the 
diagram~\eqref{E:cd2} via the map $g_I: I \to \met$.

It follows from Theorem~\ref{T:reg} that a generic path $\gamma: I \to 
\P_{\met}$ connecting regular values $\gamma_0 = (g_0,\beta_0)$ and 
$\gamma_1 = (g_1,\beta_1)$ of $\swz_{\met}$ has the property that the 
composite path $I\to \P_{\met}\to \Omega_{\met}$, which we will also 
call $\gamma$, is transverse to both $\p\swz_{\met}$ and $\swz_{\met}$.
We will say that $\gamma$ is \emph{vertical} near its intersection point 
with $\im (\p\swz_{\met})$ if its metric component is constant in a 
neighborhood of that point. 

\begin{theorem}\label{T:vertical}
Any generic path $\gamma: I \to \P_{\met}$ as above can be homotoped rel
its endpoints to a generic path that is vertical near every intersection 
point with $\im(\p\swz_{\met})$.
\end{theorem}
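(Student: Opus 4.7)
The plan is to perform the homotopy one intersection point at a time. Since $\gamma$ is transverse to $\p\swz_{\met}$, the parameterized reducible locus $\M^{0}_I \subset I$ is finite; I fix one such point $t_0$ with corresponding metric $g_{t_0}$, and a small closed interval $J = [t_0 - \ep, t_0 + \ep]$ containing $t_0$ as its unique intersection with $\im(\p\swz_{\met})$. My goal is to construct a homotopy of $\gamma|_J$, rel its values at $\p J$, to a modified path that is vertical (i.e., has constant metric component equal to $g_{t_0}$) on the subinterval $[t_0 - \ep/3, t_0 + \ep/3]$, and whose intersection with $\p\swz_{\met}$ remains transverse and consists of a single point. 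Iterating over all finitely many points of $\M^{0}_I$ and concatenating the resulting localized homotopies then yields the full homotopy.

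To execute this, I would use the automorphism $b_t: TX\to TX$ of Remark~\ref{R:id} (together with its induced action on 1-forms and 2-forms) to trivialize the Hilbert bundles $\P_{\met}$ and $\Omega_{\met}$ over a neighborhood $U$ of $g_{t_0}$ in $\met$, so that, after shrinking $\ep$ if necessary, $\gamma|_J$ lies within the trivialization. In local coordinates, $\gamma|_J$ becomes $t \mapsto (g_t, \tilde\beta_t)$ with $\tilde\beta_t \in \P_{g_{t_0}}$. I would define the metric component of the target path as $\bar g_t = \phi(t)\, g_t + (1-\phi(t))\, g_{t_0}$ for a smooth bump function $\phi: J \to [0,1]$ equal to $0$ on $[t_0 - \ep/3, t_0 + \ep/3]$ and $1$ near $\p J$; convexity of $\met$ ensures $\bar g_t$ is a valid metric path. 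The perturbation component $\bar{\tilde\beta}_t \in \P_{g_{t_0}}$ would then be chosen to agree with $\tilde\beta_t$ at $\p J$ and so that the resulting path $\bar\gamma(t) = (\bar g_t, \bar{\tilde\beta}_t)$ meets $\p\swz_{\met}$ transversally in a single point of $J$.

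The core technical step, and the main obstacle, is securing the transversality property for $\bar\gamma$. For fixed $\bar g_t$, the admissible $\bar{\tilde\beta}_t$ form a Hilbert affine space of paths with prescribed boundary values; within this space, the condition of transverse intersection with $\p\swz_{\met}$ is open and dense by a Sard--Smale argument applied to the parameterized evaluation map sending a path $\bar{\tilde\beta}$ and a parameter $t$ to $(\bar g_t, \bar{\tilde\beta}_t) \in \Omega_{\met}$, directly parallel to the argument in the proof of Theorem~\ref{T:reg} but with the metric component of the path now pre-specified. For a choice of $\bar{\tilde\beta}_t$ sufficiently close to $\tilde\beta_t$ in the appropriate Sobolev norm, the new path $\bar\gamma$ meets $\p\swz_{\met}$ in the same number of points as $\gamma|_J$, namely one, by stability of transverse intersections under small perturbations. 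Finally, the homotopy from $\gamma|_J$ to $\bar\gamma|_J$ is produced by straight-line interpolation in the local trivialization (extended by $\gamma$ outside $J$), which remains in $\P_{\met}$ because $\P_{g_{t_0}}$ is a linear subspace and $\met$ is convex; transversality needs to hold only at $s = 0$ and $s = 1$, not throughout the homotopy.
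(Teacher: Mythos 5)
Your approach is genuinely different from the paper's. The paper works in $\Omega_{\met}$: it first shows (Lemma~\ref{L:vert-vector}) that there is a unique vertical unit vector $\nu \in \ker(D_w\pi)$ transverse to the image sheet $V = (\p\swz_{\met})(U)$ at the intersection point $w$, and then explicitly reroutes the path through a small ball around $w$ so that it crosses $V$ along the vertical segment from $a_-\nu$ to $a_+\nu$; a separate global argument handles the case where $w$ has several pre-images $z_k$ in $\p\mZ_{\met}$, by showing that the various sheets $V_k$ can be assumed tangent. Your bump-function construction in $\P_{\met}$ is softer, but it has a genuine gap.

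Here is the gap. After you flatten the metric, the path $\bar\gamma = (\bar g_t, \tilde\beta_t)$ still passes through $w$ at $t=t_0$ (since $\bar g_{t_0}=g_{t_0}$ and the perturbation component is untouched), but its derivative there is $(0,\dot{\tilde\beta}_{t_0})$, and there is no a priori reason this vertical tangent vector is transverse to $T_wV$. If it is not, your Sard--Smale perturbation of $\bar{\tilde\beta}_t$ does restore transversality, but it also displaces the crossing away from $t_0$; a degenerate crossing can move (or split) by an amount that is not controlled by the size of the perturbation, so the appeal to stability of transverse intersections cannot ensure the resulting crossing lands inside $[t_0-\ep/3,\,t_0+\ep/3]$ where the metric has been made constant. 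What is missing is an argument that perturbations that change only the derivative $\dot{\bar{\tilde\beta}}_{t_0}$ (keeping $\bar{\tilde\beta}_{t_0}$ fixed, so the path still passes through $w$ at exactly $t_0$) already suffice to achieve transversality to $T_wV$ — i.e.\ that the one-dimensional quotient $T_w\Omega_{\met}/T_wV$ is hit by the vertical directions. This is exactly the nontrivial content of Lemma~\ref{L:vert-vector}, so your proposal implicitly presupposes it without establishing it. A secondary issue is that your count of ``a single point of $J$'' tacitly assumes $w$ has a unique pre-image in $\p\mZ_{\met}$; the paper spends the second half of the appendix showing how to handle multiple pre-images (either arranging the sheets to be tangent, or perturbing $\gamma$ away from transverse double-point loci), and your argument would need a comparable discussion.
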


This theorem is a re-statement of Theorem~\ref{T:special}. Its proof 
will have two steps; one local, and a second involving some global 
arguments. The first step is contained in the following three lemmas, 
all of which are concerned with a point $z \in \p \mZ_{\met}$ such that 
$(\p\swz_{\met})(z) = \gamma_0 = (g,w) \in \Omega_{\met}$, after a proper 
reparametrization of $I$. 

\begin{lemma}\label{L:injective}
Suppose that $\gamma:[-1,1] \to \P_{\met}$ is a generic path, and let 
$z \in \p\mZ_{\met}$ be such that $\swz_{\met} (z) = \gamma_0$. Then 
the differential at $z$ of the map $\p\swz_{\met}: \;\p\mZ_{\met} \to 
\,\Omega_{\met}$ is injective. 
\end{lemma}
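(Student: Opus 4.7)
Plan: the approach is to reduce the injectivity of $d(\p\swz_{\met})(z)$ to the injectivity of the fiberwise differential of $\p\swz: \p\mZ \to \Omega^2_+(X,i\R)$ at fixed metric $g_0$, and then deduce the latter from a Fredholm index count combined with the transversality hypothesis on $\gamma$.

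First, I would extend the bundle trivialization from Remark~\ref{R:id} from the path $I$ to all of $\met$, identifying $\p\mZ_{\met} \to \met$ with a product $\met \times \p\mZ$ (based at the reference metric $g_0$) and $\Omega_{\met} \to \met$ with $\met \times \Omega^2_+(X,i\R)$. In these coordinates $\p\swz_{\met}$ takes the form $(g,u) \mapsto (g, \Phi_g(u))$, covering the identity on $\met$, with $\Phi_{g_0}$ equal to the usual fiberwise map $\p\swz$. Consequently, at $z = (g_0, z_0)$ the total differential has the shape $(\dot g,\dot u) \mapsto (\dot g,\; d_u\Phi_{g_0}(\dot u) + \partial_g\Phi(\dot g))$.

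Second, I would read off the kernel and cokernel of this differential. The kernel condition forces $\dot g = 0$ and $d_u\Phi_{g_0}(\dot u) = 0$, so $\ker d(\p\swz_{\met})(z) = \{0\}\oplus\ker d_u\Phi_{g_0}(z_0)$. For the cokernel, any tangent vector $(\dot g, \dot\omega)$ can be shifted modulo the image by subtracting $(\dot g, \partial_g\Phi(\dot g))$ (which lies in the image via $\dot u = 0$) to produce $(0, \dot\omega - \partial_g\Phi(\dot g))$, and a class $(0, w)$ is in the image precisely when $w \in \operatorname{image} d_u\Phi_{g_0}(z_0)$. Hence $\coker d(\p\swz_{\met})(z) \cong \coker d_u\Phi_{g_0}(z_0)$. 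In short, the total differential has the same (finite-dimensional) kernel and cokernel as its fiberwise restriction.

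Third, by Proposition~\ref{P:reg1} the fiberwise map $\p\swz$ is Fredholm of index $-1$, so $\dim\coker d_u\Phi_{g_0}(z_0) - \dim\ker d_u\Phi_{g_0}(z_0) = 1$. On the other hand, transversality of the generic path $\gamma$ to $\p\swz_{\met}$ at the intersection point reads $\R\cdot d\gamma(0) + \operatorname{image} d(\p\swz_{\met})(z) = T_w\Omega_{\met}$, so $\dim\coker d(\p\swz_{\met})(z) \le 1$. Combining the two identities forces $\dim\ker d_u\Phi_{g_0}(z_0) \le 0$, hence zero, giving the desired injectivity. The main obstacle will be the second step: carefully verifying that the trivialized map really does cover the identity on $\met$ and that the partial $g$-derivative $\partial_g\Phi$ behaves as a graph-like correction that drops out of both the kernel and cokernel computations. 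Once this structural point is pinned down, the Fredholm bookkeeping that completes the argument is routine.
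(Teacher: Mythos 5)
Your proposal is correct and takes essentially the same route as the paper: reduce to the fiberwise Fredholm operator of index $-1$, use transversality of $\gamma$ to bound the cokernel by one, and conclude the kernel must vanish. The paper simply compresses your first two steps (the trivialization and kernel/cokernel bookkeeping) into the assertion that $\p\swz_{\met}$ has index $-1$, then applies the same transversality argument.
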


\begin{proof}
In our situation, the map $\swz_{\met}$ has index zero, and its 
restriction $\p\swz_{\met}$ has index $-1$. Because of transversality
to $\gamma$, the differential at $z$ of the map $\p\swz_{\met}$ has 
image of codimension one, hence it must be injective.
\end{proof}

By the inverse function theorem, there is a neighborhood $U$ of $z$ in 
$\p\mZ_{\met}$ such that the restriction of $\p\swz_{\met}$ to $U$ is 
an embedding into an open ball $W \subset \Omega_{\met}$. Let us write 
$V = (\p\swz_{\met})(U)$. 

\begin{lemma}\label{L:vert-vector}
There is a unique unit vector $\nu \in T_w \,\Omega_{\met}$ such that 
\begin{enumerate}
\item $\nu$ is vertical, that is, $\nu \in \ker (D_w \pi)$ with $\pi:
\,\Omega_{\met} \to \met$, 
\item $\mathbb R\cdot \nu \,\oplus\, T_w V = T_w \,\Omega_{\met}$, and
\item $\nu = a \gamma'(0) +Y$ for some $Y \in T_w V$ and $a>0$. \label{side}
\end{enumerate}
\end{lemma}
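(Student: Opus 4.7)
The plan is to reduce the assertion to linear algebra inside the Hilbert space $T_w\,\Omega_{\met}$, combining the transversality of $\gamma$ to $V$ with a submersion property of $\p\mZ_{\met}\to\met$. First I would record the relevant splitting. Lemma~\ref{L:injective} together with the fact that $\p\swz_{\met}$ is Fredholm of index $-1$ makes $T_wV$ a closed codimension-one subspace of $T_w\,\Omega_{\met}$. Transversality of $\gamma$ to $V$ at $z$ means $\gamma'(0)\notin T_wV$, giving
\[
T_w\,\Omega_{\met}\;=\;\R\cdot\gamma'(0)\;\oplus\;T_wV.
\]
Let $\alpha:T_w\,\Omega_{\met}\to\R$ be the coordinate functional for the first summand, so every $v$ decomposes uniquely as $v=\alpha(v)\,\gamma'(0)+Y_v$ with $Y_v\in T_wV$. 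In this notation condition~(3) reads $\alpha(\nu)>0$, and this already implies~(2).

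The crucial geometric input will be that $\ker(D_w\pi)\not\subset T_wV$. By the commutativity built into diagram~\eqref{E:cd2}, the composite $\pi\circ\p\swz_{\met}:\p\mZ_{\met}\to\met$ is precisely the Hilbert bundle projection claimed at the start of the appendix, hence a submersion; differentiating at $z$ yields $D_w\pi(T_wV)=T_g\met$. Since $D_w\pi:T_w\,\Omega_{\met}\to T_g\met$ is itself surjective and $T_wV$ has codimension one, the hypothetical inclusion $\ker(D_w\pi)\subset T_wV$ would force $T_wV=T_w\,\Omega_{\met}$. Hence $K:=\ker(D_w\pi)\cap T_wV$ is a closed hyperplane inside $\ker(D_w\pi)$, and the restriction $\alpha|_{\ker(D_w\pi)}$ is a nonzero continuous linear functional.

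I would then manufacture $\nu$ using the Hilbert inner product on $T_w\,\Omega_{\met}$. Let $L$ be the one-dimensional orthogonal complement of $K$ inside $\ker(D_w\pi)$, and take $\nu$ to be the unit vector in $L$ on which $\alpha$ is positive. Property~(1) holds by construction, (3) is built in, and (2) follows from $\alpha(\nu)\neq 0$ together with the codimension-one property of $T_wV$. The uniqueness claim of the lemma is pinned down by this characterization, using $\dim L=1$ and the sign constraint on $\alpha(\nu)$.

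The main obstacle I expect is not the linear-algebraic core but the submersion step: one needs the assertion from the opening of the appendix that $\p\mZ_{\met}\to\met$ is genuinely a Hilbert bundle (equivalently, that the parametric version of the transversality argument of~\cite[Lemma~27.1.1]{KM} holds with the metric included as a free parameter). Once that is in hand, the decomposition arguments above are standard Hilbert-space bookkeeping.
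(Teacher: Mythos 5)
Your argument follows the same route as the paper's two-sentence proof: use the commutativity of diagram~\eqref{E:cd2} to show that $D_w\pi$ is already surjective when restricted to $T_wV$, hence $\ker(D_w\pi)\not\subset T_wV$, and then exploit the codimension-one property of $T_wV$ to extract a vertical complement. You spell out the linear-algebra bookkeeping (the hyperplane $K=\ker(D_w\pi)\cap T_wV$ and the orthogonal line $L$) that the paper compresses into ``the result follows,'' which makes the construction more transparent but is not a different method.

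One caveat worth flagging, and it applies just as much to the lemma as stated in the paper as to your write-up: conditions (1)--(3) do not by themselves determine $\nu$. Any unit vector in $\ker(D_w\pi)\setminus K$ whose $\alpha$-component is positive satisfies all three, and since $K$ has codimension one inside the infinite-dimensional vertical tangent space $\ker(D_w\pi)$, there are many such vectors (rotate $\nu$ slightly toward any unit vector in $K$ and renormalize). Taking $\nu$ to be the unit vector in the orthogonal complement of $K$ within $\ker(D_w\pi)$, as you do, is a canonical normalization, but it is an extra choice rather than a consequence of (1)--(3); so the final sentence of your proposal, which asserts that this characterization pins down the uniqueness claim, is asserting more than the three listed conditions deliver. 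The imprecision is harmless downstream: Proposition~\ref{P:local-cross} and the remainder of the appendix only require the existence of some such vertical $\nu$, not that it be unique.
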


\begin{proof}
Since  $\partial{\mathcal Z}_{\mathcal R} \to \mathcal R$ is a submersion, the 
commutativity of the diagram~\eqref{E:cd2} implies that the differential at 
$w$ of $\pi: \Omega_{\met} \to \met$ is surjective when restricted to $T_w V 
\subset T_w \,\Omega_{\met}$. Since $V$ has codimension one in $\Omega_{\met}$, 
the result follows. 
\end{proof}

In the proof of Proposition~\ref{P:local-cross} below, we will need an
estimate saying that the Seiberg--Witten map $\p\swz_{\met}$ is close to 
its derivative.  

\begin{lemma}\label{L:tangent} There is a constant $C$ such that 
\begin{equation}\label{E:tangent}
\|\,(\p\swz_{\met})(z_1) -D_z\,(\p\swz_{\met})(z_1-z)\,\|\;\leq C\;
\|\,z_1-z\,\|^2
\end{equation}
for $\| z_1-z\|$ sufficiently small. In this inequality, the distances are 
computed in $\met \times \widetilde\B$ and $\met \times\,\Omega^2(X,i\R)$,
respectively.
\end{lemma}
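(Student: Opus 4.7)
The estimate~\eqref{E:tangent} is essentially a second-order Taylor remainder for the map $\p\swz_{\met}$, so the plan is to apply Taylor's theorem with remainder once the smoothness of the map has been verified. Restricted to $\p\mZ_{\met}$ (the locus $s = 0$), the map takes the form $(g,A,0,\phi) \mapsto (g, F_A^{+_g})$, independent of $\phi$ in its second component. The only source of nonlinearity is the metric dependence of the self-dual projection $+_g = \tfrac{1}{2}(1 + *_g)$; in particular $A \mapsto dA$ is linear, and the composition $(g,A)\mapsto F_A^{+_g}$ is linear in $A$ with coefficients smooth in $g$.

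The key step is to observe that $g\mapsto *_g$ is smooth as an operator-valued map from $\met$ into $\mathrm{End}(\Omega^2(X,i\R))$: in local coordinates its components are rational expressions in the entries of $g$ and $g^{-1}$, with denominators controlled by $\det g > 0$, so the Sobolev multiplication theorems in dimension four yield smoothness once $k$ is taken large enough that the relevant Sobolev space of two-forms is a Banach algebra. Composing with $A\mapsto dA$ then shows that $\p\swz_{\met}$ is $C^\infty$ between the relevant Sobolev completions, and in particular of class $C^2$ with second derivative continuous in the base point.

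Given this, the integral form of Taylor's theorem in a Hilbert space gives
\[
\p\swz_{\met}(z_1) - \p\swz_{\met}(z) - D_z(\p\swz_{\met})(z_1-z) \;=\; \int_0^1 (1-t)\,D^2_{z+t(z_1-z)}(\p\swz_{\met})(z_1-z,\,z_1-z)\,dt,
\]
and taking $C$ to be half the supremum of $\|D^2_{z'}(\p\swz_{\met})\|$ on a fixed small ball around $z$ converts this identity into~\eqref{E:tangent}. The cosmetic absence of a $\p\swz_{\met}(z)$ term in the lemma statement is harmless: since $z$ is fixed throughout Lemmas~\ref{L:injective}--\ref{L:tangent}, one may subtract the constant $w = \p\swz_{\met}(z)$ from both sides and work in the affine target centered at $w$.

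The main obstacle is the quantitative control of the second derivative of $g\mapsto *_g$, uniformly on a neighborhood of $g$. This is a standard but slightly delicate Sobolev-space computation that is precisely where the requirement, flagged at the start of Section~\ref{A:vertical}, that $\met$ carry the $C^k$ topology for sufficiently large $k$ comes into play; once it is granted, the lemma is an instance of Taylor's theorem in a Hilbert space with no further genuine analytic content.
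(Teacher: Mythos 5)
Your proposal is correct and follows essentially the same strategy as the paper: reduce to Taylor's theorem with second-order remainder, note that the map is linear in $A$ and independent of $\phi$ on the boundary stratum, and isolate the Hodge star as the only source of nonlinearity. The one (minor) divergence is in how you control $g \mapsto *_g$: you invoke rational expressions in $g$, $g^{-1}$ and Sobolev multiplication to get $C^2$ bounds, whereas the paper uses the trivialization of Remark~\ref{R:id} to reduce to a \emph{polynomial} (quartic) dependence on the automorphism $b$, which makes uniform boundedness of the second derivative immediate. Your Sobolev-multiplication digression is in fact slightly misplaced here, since $\met$ is taken in the $C^k$ topology (as flagged at the start of the appendix), so one is only multiplying $C^k$ coefficients against Sobolev sections and no Banach-algebra structure on $L^2_{k-1}$ two-forms is needed; but this does not affect the validity of the argument, merely how cleanly the bound is justified.
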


\begin{proof}
According to Taylor's theorem for smooth functions on Banach spaces 
(cf.~\cite[\S 4.5]{zeidler:applications}), an estimate of the 
form~\eqref{E:tangent} holds whenever the second derivative of 
$\p\swz_{\met}$ is uniformly bounded in a ball around $z$. The map 
$\p\swz_{\met}$ is linear in the connection $A$ (and independent of 
$\phi$ since $z\in \p\mZ_{\met}$), so the only issue is the dependence 
on the metric $g$. Trivialize the bundle $\Omega_{\mathcal R} \to
\mathcal R$ using the maps $b_t$ as in Remark~\ref{R:id}, and observe 
that $\p\swz_{\met}$ only depends on $g$ via the Hodge star
operator. Simple linear algebra shows that this dependence is quartic, 
from which the bound follows. 
\end{proof}

A similar estimate, $\|\gamma(t) -t \gamma'(0) \|\, \leq\, C t^2$, 
holds for the path $\gamma$. These three lemmas yield the local 
statement we need.

\begin{proposition}\label{P:local-cross}
Suppose that $\gamma:[-1,1] \to \Omega_{\met}$ is a generic path in $W$ 
whose pre-image in $U$ is a single point $z$ with $w = (\p\swz_{\met})(z)
= \gamma_0$. For any sufficiently small $\delta > 0$, there is a generic 
path $\gamma_\delta$ such that $\gamma_\delta \cap V = \gamma \cap V$ 
and $\gamma_{\delta}$ crosses $V$ vertically at $w$. In addition, the 
two paths are homotopic by a homotopy supported in $(-\delta,\delta)$.
\end{proposition}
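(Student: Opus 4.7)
The plan is a purely local modification: in a chart that straightens $V$ to an affine hyperplane through $w$, I will replace $\gamma$ on a small window around $t=0$ by the linear trajectory $t\mapsto w+t\nu$, glued to $\gamma$ by a smooth cutoff. Lemma~\ref{L:vert-vector}(\ref{side}) is what makes this work: because $\nu$ points to the same side of $V$ as $\gamma'(0)$, the interpolation will create no spurious intersections with $V$ and will cross $V$ in the same direction as $\gamma$ does.

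Concretely, the first step is to pick a smooth chart $\Phi\colon W''\to T_w\Omega_{\met}$ on a neighborhood $W''\subset W$ of $w$ with $\Phi(w)=0$, $D\Phi_w=\id$, and $\Phi(V\cap W'')=T_wV\cap\Phi(W'')$; such a chart exists because $V$ is a submanifold. Using the splitting $T_w\Omega_{\met}=\R\nu\oplus T_wV$, decompose $\Phi\circ\gamma(t)=(\gamma_1(t),\gamma_2(t))$, so that $\gamma_1'(0)=1/a>0$ by Lemma~\ref{L:vert-vector}(\ref{side}). Choose a bump $\kappa\in C_c^\infty(-1,1)$ with $\kappa\equiv 1$ on $[-\tfrac12,\tfrac12]$, set $\kappa_\delta(t)=\kappa(t/\delta)$, and (reading everything in the chart) define
\[
\gamma_\delta(t)\;=\;\kappa_\delta(t)\,t\,\nu\;+\;\bigl(1-\kappa_\delta(t)\bigr)\,\gamma(t).
\]
Then $\gamma_\delta\equiv\gamma$ off $(-\delta,\delta)$, $\gamma_\delta(0)=w$, and $\gamma_\delta'(0)=\nu$ (the $\kappa_\delta'$ terms vanish because $\gamma(0)=0$). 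The $\nu$-component of $\gamma_\delta(t)$ equals $t\bigl[\kappa_\delta(t)+(1-\kappa_\delta(t))\gamma_1(t)/t\bigr]$ for $t\neq 0$, which is a strictly positive multiple of $t$ once $\delta$ is small enough that $\gamma_1(t)/t$ stays positive on $(-\delta,\delta)$; hence $\gamma_\delta\cap V=\{w\}$. The linear interpolation $H_s=(1-s)\gamma+s\gamma_\delta$ then provides a homotopy supported in $(-\delta,\delta)$.

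The step I expect to be the main obstacle is confirming that $\gamma_\delta$ is still \emph{generic}, i.e.\ transverse to both $\p\swz_{\met}$ and $\swz_{\met}$. Transversality to $\p\swz_{\met}$ is automatic: at $t=0$ it holds because $\nu\notin T_wV$, and elsewhere it is inherited from $\gamma$. Transversality to $\swz_{\met}$ is an open condition in the $C^1$ topology which already holds off $(-\delta,\delta)$, and inside is disturbed only by an $O(\delta)$--small modification. If it fails somewhere in the window, the Sard--Smale argument underlying Theorem~\ref{T:reg}---applied to perturbations of $\gamma$ supported in $(-\delta,\delta)\setminus\{0\}$ and valued in the fiber $\pi^{-1}(\pi(w))\subset\P_{\met}$---produces an arbitrarily small further perturbation that restores full transversality while preserving both the vertical crossing datum at $t=0$ and the unique intersection $\gamma_\delta\cap V=\{w\}$.
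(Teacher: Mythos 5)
Your proof is correct and achieves the same end as the paper's, but the execution is genuinely different.  The paper straightens $V$ inside a small ball $W$, picks the two points $a_\pm\nu\in\p W$ on the appropriate sides of $V$, and then constructs $\gamma_\delta$ piecewise: follow $\gamma$ up to $\p W$, run along $\p W$ (off $V\cap\p W$) to $a_-\nu$, traverse the vertical segment to $a_+\nu$, and return along $\p W$ to rejoin $\gamma$; the estimate of Lemma~\ref{L:tangent} is what makes the side conditions achievable for small $W$.  You instead straighten $V$ in a chart, split against $\R\nu\oplus T_wV$, and use a cutoff to interpolate linearly between $\gamma$ and the vertical line $t\mapsto t\nu$.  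Your version has the advantage of being automatically smooth (the paper's concatenation requires an implicit smoothing at the four gluing points), and your sign bookkeeping via $\gamma_1(t)/t>0$ is a clean way of seeing that no new intersections with $V$ are introduced -- it is the analytic counterpart of the paper's choice of $a_\pm\nu$ on the correct sides of $V$, both ultimately resting on Lemma~\ref{L:vert-vector}(\ref{side}) and the quantitative closeness of $\p\swz_{\met}$ to its linearization.  One small caution: your closing sentence about transversality to $\p\swz_{\met}$ ``inherited from $\gamma$'' should really say ``vacuous on $(-\delta,\delta)\setminus\{0\}$ because $\gamma_\delta$ misses $V$ there''; and the restoration of interior transversality to $\swz_{\met}$ by a Sard--Smale perturbation supported in $(-\delta,\delta)\setminus\{0\}$ is stated rather than carried out, but the paper is equally terse on this point, so this is not a gap relative to the source.
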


\begin{proof}
By choosing local coordinates in the ball $W$ near $w$,  we can assume 
that $V \subset W$ is a linear ball of codimension one.  In particular, 
$V$ separates $W$ into two components, and likewise the intersection 
$V \cap\, \p W$ separates $\p W$ into two components. Since $\gamma$ is 
transversal to $V$ at $w$, it must cross from one side of $V$ to the 
other. Choosing $W$ small enough, we can assume that 
\begin{enumerate}
\item $\gamma^{-1} (W)$ is a single interval $(-\delta,\delta)$ for 
some $\delta > 0$,
\item a small positive multiple $a_+\nu$ of the vector $\nu$ from 
Lemma~\ref{L:vert-vector} lies on the same side of $V$ as $\gamma(t)$ 
for $t \in (0,\delta)$,
\item a small negative multiple $a_-\nu$ lies on the same side of $V$ 
as $\gamma(t)$ for $t \in (-\delta,0)$, \, and
\item the multiples $a_\pm \nu$ belong to $\p W$.
\end{enumerate}

\noindent
Now we can construct a path $\gamma_\delta$  as follows. It is equal to 
$\gamma$ outside of $(-\delta,\delta)$. From $-\delta$ to $-\delta/2$, 
it follows a path in $\p W$ (avoiding $V \cap \p W$) from $\gamma(-\delta)$ 
to $a_- \nu$. From $-\delta/2$ to $\delta/2$, it follows the vertical line 
from $a_- \nu$ to $a_+\nu$. Finally, from $\delta/2$ to $\delta$, it 
follows a path in $\p W$ (avoiding $V \cap \p W$) from $a_+\nu$ to $\gamma
(\delta)$. See Figure~\ref{fig:local-int}.
\end{proof}

\begin{figure}[!ht] 
\centering
\psfrag{om2}{$\Omega^2_{+}$}
\psfrag{gd}{$\gamma_{\delta}$}
\psfrag{apn}{$a_+\nu$}
\psfrag{amn}{$a_-\nu$}
\psfrag{gam}{$\gamma$}
\psfrag{gam'}{$\gamma'(0)$}
\psfrag{nu}{$\nu$}
\psfrag{W}{$W$}
\psfrag{w}{$w$}
\psfrag{met}{$\met$}
\psfrag{swz}{$V$}
\psfrag{tsw}{$T_w V$}
\psfrag{P}{$\P$}
\includegraphics[scale=.75]{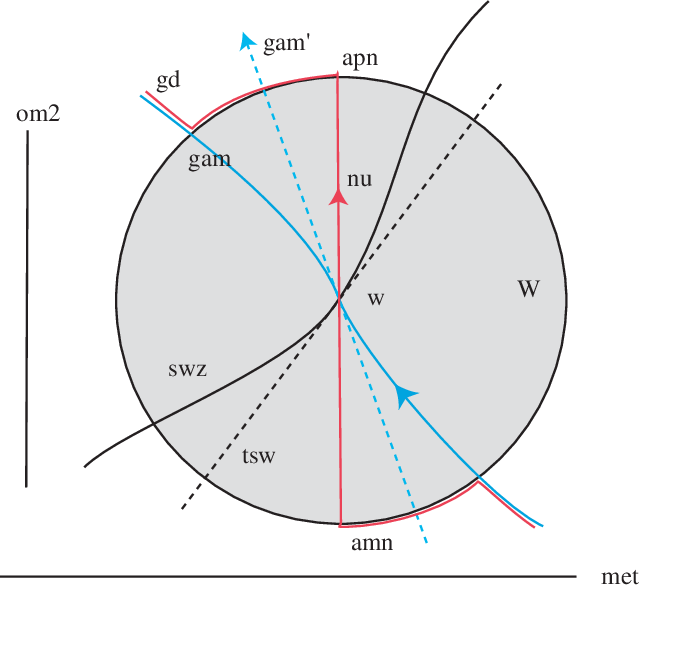} 
\caption{Local modification of $\gamma$}
\label{fig:local-int} 
\end{figure} 

The main issue in deriving Theorem~\ref{T:vertical} from the above local 
result, which is Proposition~\ref{P:local-cross}, is that a given 
intersection point $w \in \im (\p\swz_{\met}) \cap \gamma$ may have 
several pre-images in $\p\mZ_{\met}$. There are at most finitely many such 
pre-images, which follows from the map $\swz_{\met}$ being proper, as 
proved in~\cite[Theorem 5.2.1]{KM}. 

Denote the points in $(\p\swz_{\met})^{-1}(w)$ by $z_k \in \p\mZ_{\met}$, 
$k = 1,\ldots, n$. Choose an open ball $W$ centered at $w$ with the 
property that, for each $k$, the point $z_k$ has a neighborhood $U_k 
\subset \p\mZ_{\met}$ such that the restriction of $\p\swz_{\met}$ to 
$U_k$ is an embedding into $W$. Note that the sheets $V_k = 
(\p\swz_{\met})(U_k)$ meeting at $w$ are all embedded but they might be 
transverse or tangent to each other. 

\begin{lemma}
If the ball $W$ as above is chosen sufficiently small then 
\[
(\p\swz_{\met})^{-1}(W)\; \subset\; \bigsqcup_k\; U_k.
\]
\end{lemma}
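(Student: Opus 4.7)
The plan is a standard compactness argument by contradiction. Suppose no such ball $W$ exists. Then one can find a nested sequence of open balls $W_n$ centered at $w$ with $\bigcap_n W_n = \{w\}$, and points $y_n \in (\p\swz_{\met})^{-1}(W_n)$ with $y_n \notin \bigsqcup_k U_k$. By construction $(\p\swz_{\met})(y_n) \to w$ in $\Omega_{\met}$.

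The key input is the properness of the Seiberg--Witten map. The compactness of the blown-up moduli space established in~\cite[Theorem 5.2.1]{KM}, extended in the obvious way to the parameterized setting $\tmZ_{\met} \to \Omega_{\met}$ (as is already invoked in the paper to argue that $(\p\swz_{\met})^{-1}(w)$ is finite), shows that a sequence of preimages of a convergent sequence in $\Omega_{\met}$ has a subsequence converging in $\p\mZ_{\met}$. Applying this to $\{y_n\}$ yields a subsequential limit $y_* \in \p\mZ_{\met}$ with $(\p\swz_{\met})(y_*) = w$, so $y_* = z_k$ for some $k \in \{1,\dots,n\}$.

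But then $y_n \in U_k$ for all sufficiently large $n$, because $U_k$ is an open neighborhood of $z_k$ in $\p\mZ_{\met}$. This contradicts the choice $y_n \notin \bigsqcup_k U_k$, completing the proof.

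The one place that requires a bit of care is the justification that the compactness statement of~\cite[Theorem 5.2.1]{KM}, originally phrased for a fixed metric, carries over to the parameterized setup. This is the same extension already used implicitly to guarantee that $(\p\swz_{\met})^{-1}(w)$ is a finite set, and amounts to running the standard Seiberg--Witten a priori estimates with the metric as an additional (compactly varying, hence uniformly controlled) parameter; since $w$ itself is fixed, only values of the metric in a bounded neighborhood of $\pi(w)$ enter the argument, and the usual bounds on $F_A$, $s$, and $\phi$ go through uniformly.
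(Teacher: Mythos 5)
Your proof is correct and follows essentially the same route as the paper's: argue by contradiction, use the properness of $\p\swz_{\met}$ (via the compactness result of~\cite[Theorem 5.2.1]{KM} extended to the parameterized setting) to extract a convergent subsequence whose limit must be some $z_k$, and then derive a contradiction from the fact that $U_k$ is an open neighborhood of $z_k$. The additional remarks about carrying over the compactness estimates uniformly in the metric parameter are a reasonable elaboration of the same point the paper makes implicitly.
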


\begin{proof}
Suppose to the contrary that there is no such $W$. Then there is a sequence 
of points $u_i \in \p\mZ_{\met}$ away from the $U_k$ with bounded energy 
(again, referring to~\cite[Theorem 5.2.1]{KM}) and with $(\p\swz_{\met})(u_i)$ 
converging to $w$. It follows that a subsequence of the $u_i$ converges in 
$\p\mZ_{\met}$, and then of course it must converge to one of the $z_k$. 
This contradiction establishes the claim.
\end{proof}

Next, we claim that we may assume that all of the $V_k$ are tangent at their 
common intersection point $w$. Suppose that $T_w V_i \neq T_w V_j$ for some 
$i \neq j$. Since both of these subspaces have codimension one (cf. the 
proof of Lemma~\ref{L:injective}), they must in fact be transverse. It 
follows that the intersection $V_i \cap V_j$ must be a codimension $2$ 
submanifold near $w$. A small perturbation of $\gamma$ will then suffice to 
avoid all such double point sets.

Finally, we are in the situation where at any intersection point $w$, all of 
the finitely many $V_k$ that meet $W$ are tangent. It follows that the vector
$\nu$ constructed in Lemma~\ref{L:vert-vector} lies on the same side of all 
of the $V_k$, and so (if $W$ is sufficiently small) the curve constructed by 
the local deformation in Proposition~\ref{P:local-cross} meets all sheets 
vertically.  Since the deformation takes place in $W$, no new intersections 
have been introduced, and Theorem~\ref{T:vertical} is proved.


\end{document}